\newtheorem{lemma}{Lemma}[section]
\newtheorem{theo}[lemma]{Theorem}
\newtheorem{prop}[lemma]{Proposition}
\newtheorem{cor}[lemma]{Corollary}
\newtheorem{problem}[lemma]{Open Problem}
\theoremstyle{definition}
\newtheorem{notation}[lemma]{Notation}
\newtheorem{defin}[lemma]{Definition}
\newtheorem{remark}[lemma]{Remark}
\numberwithin{equation}{section}
\newenvironment{eq}{\begin{equation}}{\end{equation}}
\newcommand{\Char}{\mathop{\rm char}}
\newcommand{\bfA}{\mathbf{A}} %class A of algebras of dim 2
\newcommand{\bfB}{\mathbf{B}} %class B of algebras of dim 2
\newcommand{\bfC}{\mathbf{C}} %class C of algebras of dim 2
\newcommand{\bfD}{\mathbf{D}} %class D of algebras of dim 2
\newcommand{\bfE}{\mathbf{E}} %class E of algebras of dim 2
\newcommand{\bfN}{\mathbf{N}} %class N of algebras of dim 2
\newcommand{\calU}{\mathcal{U}} 
\newcommand{\calT}{\mathcal{T}}
\newcommand{\calV}{\mathcal{V}}
\newcommand{\FF}{\mathbb{F}}
\newcommand{\CC}{\mathbb{C}}
\newcommand{\NN}{\mathbb{N}}
\newcommand{\algA}{\mathcal{A}}
\newcommand{\algB}{\mathcal{B}}
\newcommand{\algV}{\mathcal{V}}
\newcommand{\algW}{\mathcal{W}}
\newcommand{\algX}[1]{{\rm alg}_{\FF}\{X\}_{#1}}
\newcommand{\Sym}{{\mathcal S}}
\newcommand{\calcP}{{\mathcal P}}
\newcommand{\Pol}{{\rm Pol}}
\newcommand{\ML}[1]{M_{\rm L}^{(#1)}} % n x n matrix, which is obtained from the table of multiplication M
\renewcommand{\MR}[1]{M_{\rm R}^{(#1)}} % n x n matrix, which is obtained from the table of multiplication M
\newcommand{\MdL}[2]{M_{\rm L}^{(#1,#2)}} % n x n matrix, which is obtained from the table of multiplication M
\newcommand{\MdR}[2]{M_{\rm R}^{(#1,#2)}} % n x n matrix, which is obtained from the table of multiplication M
\newcommand{\al}{\alpha}
\newcommand{\be}{\beta}
\newcommand{\ga}{\gamma}
\newcommand{\la}{\lambda}
\newcommand{\de}{\delta}
\newcommand{\De}{\Delta}
\newcommand{\LA}{\langle}
\newcommand{\RA}{\rangle}
\newcommand{\ov}[1]{\overline{#1}}
\newcommand{\un}[1]{{\underline{#1}} }
\newcommand{\alg}{\mathop{\rm alg}}
\newcommand{\mdeg}{\mathop{\rm mdeg}}
\newcommand{\tr}{\mathop{\rm tr}}
\newcommand{\Tr}{\mathop{\rm Tr}}
\newcommand{\Aut}{\mathop{\rm Aut}}
\newcommand{\re}{\mathop{\rm Re}}
\newcommand{\im}{\mathop{\rm Im}}
\newcommand{\G}{{\rm G}_2}
\newcommand{\GL}{{\rm GL}}
\newcommand{\PGL}{{\rm PGL}}
\newcommand{\matr}[4]{\left(\begin{array}{cc}
#1 & #2 \\
#3 & #4 \\
\end{array}\right)}
\newcommand{\smatr}[4]{\left(\begin{smallmatrix}
#1 & #2 \\
#3 & #4 \\
\end{smallmatrix}\right)}
\newcommand{\bigfrac}[2]{\!\!\!\!\begin{array}{l}
#1 \\
#2 \\
\end{array}}
\newcommand{\OO}{\mathbf{O}}
\newcommand{\uu}{\mathbf{u}}
\newcommand{\vv}{\mathbf{v}}
\newcommand{\cc}{\mathbf{c}}
\newcommand{\zero}{\mathbf{0}}
\begin{document}
\title[Polynomial invariants for low dimensional algebras]{Polynomial invariants for low dimensional algebras}

\thanks{This research was supported by MINEDUC-UA project, code ANT22991 and by FAEPEX 2273/24. The first author was also supported by project PFR22-002 from VRIIP-UA}

\author{María Alejandra Alvarez}
\address{María Alejandra Alvarez\\
Departamento de Matemáticas, Facultad de Ciencias Básicas, 
Universidad de Antofagasta, Antofagasta, Chile}
\email{maria.alvarez@uantof.cl (María Alejandra Alvarez)}

\author{Artem Lopatin}
\address{Artem Lopatin\\
Universidade Estadual de Campinas (UNICAMP), Campinas, SP, Brazil}
\email{dr.artem.lopatin@gmail.com (Artem Lopatin)}

\begin{abstract}
We classify all two-dimensional simple algebras (which may be non-associative) over an algebraically closed field. For each two-dimensional algebra $\algA$,  we describe a minimal (with respect to inclusion) generating set for the algebra of invariants of the $m$-tuples of $\algA$ in the case of characteristic zero. In particular, we establish that for any two-dimensional simple algebra $\algA$ with a non-trivial automorphism group, the Artin--Procesi--Iltyakov Equality holds for $\algA^m$; that is, the algebra of polynomial invariants of $m$-tuples of $\algA$ is generated by operator traces. As a consequence, we describe two-dimensional algebras that admit a symmetric or skew-symmetric invariant nondegenerate bilinear form.

\noindent{\bf Keywords: } polynomial invariants, non-associative algebras,  generating set, traces, bilinear form.

\noindent{\bf 2020 MSC: } 13A50, 15A72, 1630, 17A30, 17A36, 20F29. 

%13A50  	Actions of groups on commutative rings; invariant theory
%16R30  	Trace rings and invariant theory (associative rings and algebras)
%15A72  	Vector and tensor algebra, theory of invariants
%17A36  	Automorphisms, derivations, other operators (nonassociative rings and algebras)
%17A30  	Nonassociative algebras satisfying other identities
%20F29  	Representations of groups as automorphism groups of algebraic systems
\end{abstract}

\maketitle

\tableofcontents

%========================================================
%S1======================================================
\section{Introduction}

\subsection{Algebra of invariants}\label{section_alg_inv} Assume that $\FF$ is an algebraically closed field of an arbitrary characteristic $\Char{\FF}$. All vector spaces and algebras are over $\FF$.   

Assume that $\algA$ is an $\FF$-algebra of dimension $n$, i.e., $\algA$ is a vector space equipped with a bilinear multiplication, which is not necessarily associative. Consider a subgroup $G$ of the group of all automorphisms $\Aut(\algA)\leqslant \GL_n$ of the algebra $\algA$. Given $m>0$, the group $G$ acts diagonally on $\algA^m = \algA \oplus \cdots \oplus \algA$ (with $m$ summands), i.e., $g\cdot\un{a} = (g\cdot a_1,\ldots,g\cdot a_m)$ for all $g\in G$ and $\un{a}=(a_1,\ldots,a_m)$ from $\algA^m$. The coordinate ring of the affine variety $\algA^m$ is a polynomial algebra (i.e., commutative and associative)  $$\FF[\algA^m] = \FF[x_{ri}\,|\, 1\leqslant r\leqslant m, \; 1\leqslant i\leqslant n].$$
Fix a basis $\{e_1,\ldots,e_n\}$ for $\algA$, and for any $a\in\algA$ denote by $(a)_i$ the $i^{\rm th}$ coordinate of $a$ with respect to the given basis. 
Note that we can consider $x_{ri}$ as a function $\algA^m\to\FF$ defined by $x_{ri}(\un{a}) = (a_r)_i$. Hence, elements of $\FF[\algA^m]$ can be interpreted as polynomial functions $\algA^m\to\FF$. 
The action of $G$ on $\algA^m$ induces the action on the coordinate ring $\FF[\algA^m]$ as follows: $(g\cdot f)(\un{a}) = f(g^{-1}\cdot\un{a})$ for all $g\in G$, $f\in \FF[\algA^m]$ and $\un{a}\in\algA^m$. The algebra of {\it $G$-invariants of the $m$-tuple of the algebra $\algA$} is
$$\FF[\algA^m]^{G}=\{f\in \FF[\algA^m]\,|\,g\cdot f=f \text{ for all }g\in G\},$$
or, equivalently, 
$$\FF[\algA^m]^{G}=\{f\in \FF[\algA^m]\,|\,f(g\cdot\un{a})=f(\un{a}) \text{ for all }g\in G,\; \un{a}\in \algA^m\}.$$

For short, the algebra of $\Aut(\algA)$-invariants of the $m$-tuple of $\algA$ is called the algebra of {\it invariants of the $m$-tuple of $\algA$}, and we denote it by
$$I_m(\algA):=\FF[\algA^m]^{\Aut(\algA)}.$$%
It is well known that the so-called operator traces (see Section~\ref{section_tr} for the details) are $G$-invariants. Denote by $\Tr(\algA)_m$ the subalgebra of $I_m(\algA)$ generated by all operator traces together with $1$. We say that the {\it Artin--Procesi--Iltyakov Equality} holds for $\algA^m$ if 
$$I_m(\algA) = \Tr(\algA)_m.$$%
It is known that for every $m\geqslant 1$ the Artin--Procesi--Iltyakov Equality holds for $\algA^m$ for the following algebras:
\begin{enumerate}
\item[$\bullet$] $\algA=M_n$ is the algebra of all $n\times n$ matrices over $\FF$, in case $\Char{\FF}=0$ or $\Char{\FF}>n$ (see, for example, Proposition~\ref{prop_M});

\item[$\bullet$] $\algA=\OO$ is the octonion algebra, in case $\Char{\FF}\neq2$ (see, for example,  Proposition~\ref{prop_O});

\item[$\bullet$] $\algA={\mathbb A}$ is the split Albert algebra, i.e., the exceptional simple Jordan algebra of $3\times 3$ Hermitian matrices over $\OO$ with the symmetric multiplication  $a\circ b = (ab+ba)/2$, in case $\Char{\FF}=0$ and $m\in\{1,2\}$.
\end{enumerate}

The three aforementioned algebras of invariants have been intensively studied.  M. Artin~\cite{Artin_1969} conjectured that the algebra of invariants  $I_m(M_n)$ is generated by traces of products of generic matrices. This conjecture was independently established by Sibirskii~\cite{Sibirskii68} and Procesi~\cite{Procesi76} in case $\Char\FF=0$. Later, Donkin~\cite{Donkin92a} described the generators for the algebra of invariants $I_m(M_n)$ when  $\Char\FF>0$. Minimal generating sets for the cases $n=2,3$ were obtained in~\cite{Procesi84, DKZ02,  Lopatin_Comm1, Lopatin_Sib, Lopatin_Comm2} and for $n=4,5$ with small values of $m$, in~\cite{Teranishi86, Drensky_Sadikova_4x4,Djokovic07}, assuming $\Char\FF=0$. The algebra of invariants $I_m(M_n)$ admits generalizations to invariants of representations of quivers and their various extensions (see~\cite{Lopatin09JA} for more details and references). 

A generating set for the algebra of invariants of the $m$-tuple of $\OO$ was constructed by Schwarz~\cite{schwarz1988} over the field of complex numbers $\CC$. This result was generalized to an arbitrary infinite field of odd characteristic by Zubkov and Shestakov in~\cite{zubkov2018}. Moreover, in case $\Char{\FF}\neq2$ a minimal generating set was constructed by Lopatin and Zubkov in~\cite{LZ_2}, using the classification of pairs of octonions from~\cite{LZ_1}.

Working over a field of characteristic zero, Iltyakov~\cite{Iltyakov_1995} proved that $I_m({\mathbb A})$ is integral over $\Tr({\mathbb A})_m$ and established the equality $I_m({\mathbb A}) = \Tr({\mathbb A})_m$ for $m\in\{1,2\}$.  Polikarpov~\cite{Polikarpov_1991} constructed a minimal generating set for $\Tr({\mathbb A})_2$ in case $\Char{\FF}\neq2,3$.

Assume from now on that $\Char{\FF}=0$ for the remainder of Section~\ref{section_alg_inv}.  Due to the following straightforward remark, the equality $I_m(\algA) = \Tr(\algA)_m$ does not hold in general. 

%-----------------------------------------------------
\begin{remark}\label{remark_trivial} For an $n$-dimensional algebra $\algA$ and $m>0$, the following two conditions are equivalent:
\begin{enumerate}
\item[(a)] $I_m(\algA)=\FF[\algA^m]$,

\item[(b)] the group $\Aut(\algA)$ is trivial.
\end{enumerate}%
In both of these cases, the Artin--Procesi--Iltyakov Equality does not hold for $\algA^m$ in case $n\geqslant 3$, since the homogeneous component of $I_m(\algA)$ of degree one has dimension at most two.  
\end{remark}

\noindent{}Moreover, the equality $I_m(\algA) = \Tr(\algA)_m$ does not hold in general for simple algebras of dimension greater than two, since a simple $n$-dimensional algebra with the trivial automorphism group was constructed by L'vov and Martirosyan~\cite{Lvov_Martirosyan_1982} (see also~\cite{Popov_1995}) for every $n\geqslant2$.

%Iltyakov~\cite{Iltyakov_1995GD} formulated an analogue of Artin's conjecture for an arbitrary finite-dimensional simple algebra $\algA$, namely, that $I_m(\algA) = \Tr(\algA)_m$. 

If a finite-dimensional simple algebra $\algA$ is generated by $m$ elements, Iltyakov~\cite{Iltyakov_1995GD} established that the field of rational invariants $\FF(\algA^m)^{\Aut(A)}$ is equal to the field of fractions of $\Tr(\algA)_m$, where $\FF(\algA^m)$ denotes the field of rational functions in the variables $\{x_{ri}\,|\,1\leqslant r\leqslant m, \; 1\leqslant i\leqslant n\}$. Iltyakov and Shestakov~\cite{Iltyakov_Sh_1996} also proved that the field $\FF({\mathbb A}^m)^{\Aut({\mathbb A})}$ is rational. For a finite-dimensional simple algebra $\algA$ over the  field $\FF=\CC$ of complex numbers, Iltyakov~\cite{Iltyakov_1998} described generators for $I_m(\algA)$ in terms of Laplace operators, in case where there exists an associative symmetric $\Aut(\algA)$-invariant nondegenerate bilinear form $\varphi:\algA^2\to \FF$ and $\algA$ has a compact real form on which $\varphi$ is positive definite (see Section~\ref{section_cor} for the definitions).  Elduque and Iltyakov~\cite{Iltyakov_Elduque_1999} proved that the algebra $\Tr(\algA)_m$ is finitely generated for every finite-dimensional algebra $\algA$. 

Note that there are other polynomial invariants for $n$-dimensional algebras, which arise  when we consider an $n$-dimensional algebra as an element of $\algW=\algV^{\ast}\otimes \algV^{\ast} \otimes \algV$, where $\algV=\FF^n$, and $\GL_n$ acts on $\algW$ via algebra isomorphisms (see~\cite{Munoz_Rosado_2012, Popov_2016} for more details).

%========================================================
\subsection{Results}\label{section_results} 

Working over a field of arbitrary characteristic, we classify all two-dimensional simple algebras in Theorem~\ref{theo_simple}. As a consequence, 
we prove that the automorphism group of a two-dimensional simple algebra is finite (see Corollary~\ref{cor_simple}). In case $\Char{\FF}=0$, for each two-dimensional algebra $\algA$, we describe a minimal (with respect to inclusion) generating set for the algebra of invariants of the $m$-tuples of $\algA$ (see Theorem~\ref{theo_gens}). In particular, given a two-dimensional simple algebra $\algA$ with a non-trivial automorphism group,
the Artin--Procesi--Iltyakov Equality holds for $\algA^m$ (see Corollary~\ref{cor_main}). We also explicitly described a series of {\it two-dimensional} simple algebras with the trivial automorphisms groups such that the Artin--Procesi--Iltyakov Equality does not hold for $\algA^m$ (see Lemma~\ref{lemma_non_API}). As a consequence of Theorem~\ref{theo_gens}, we characterize two-dimensional algebras, which can be endowed with a symmetric or skew-symmetric  invariant nondegenerate bilinear form (see Proposition~\ref{prop_form}). In particular, we show that any two-dimensional algebra with an infinite automorphism group does not admit a  symmetric invariant nondegenerate bilinear form (see Corollary~\ref{cor_form}). 

In Section~\ref{section_prelim}, we provide key definitions of generic elements and operator traces, along with some notations. In Section~\ref{section_trace_alg}, we prove Proposition~\ref{prop_trace}, which gives an explicit formula for calculating operator traces. As an example, in Section~\ref{section_example}, we consider matrix invariants and invariants of octonions. In Section~\ref{section_gen}, we present classical methods for calculating generators for an algebra of invariants in the characteristic zero case: reduction to the multilinear case, Weyl's polarization theorem, and Noether's theorem for finite groups. The classification of two-dimensional algebras, obtained by Kaygorodov and Volkov~\cite{Kaygorodov_Volkov_2019}, is presented in Section~\ref{section_simple2dim}. Using this classification, we describe all two-dimensional simple algebras in Theorem~\ref{theo_simple}. Finally, in Section~\ref{section_invariants}, we prove our main results: Theorem~\ref{theo_gens}, Theorem~\ref{theo_API}, and Corollary~\ref{cor_main}. An application to bilinear forms is considered in Section~\ref{section_cor}.

To the best of our knowledge, there is no known counterexample to the following conjecture: over a field of characteristic zero, the Artin--Procesi--Iltyakov Equality holds for $\algA^m$ for every simple algebra $\algA$ with a non-trivial automorphism group.

%-------------------------------------------------------
\begin{problem}\label{problem_relations}
Over a field of characteristic zero, describe the relations between generators of the algebra of invariants $I_m(\algA)$ for every two-dimensional algebra $\algA$.
\end{problem}

Since in many cases the algebra of invariants $I_m(\algA)$ is generated by operator traces (see Theorem~\ref{theo_API}), Problem~\ref{problem_relations} is closely connected with the problem of describing polynomial identities for two-dimensional algebras. The systematic study of the latter problem was initiated in~\cite{PIs_Jordan_dim2_2023} and continued in~\cite{PIs_Novikov_dim2_2025}, where the cases of Jordan algebras and Novikov algebras, respectively, were considered.

%========================================================
\subsection{Notations}\label{section_notations} 

A monomial $w= x_{r_1,i_1}\cdots x_{r_k,i_k}$ from $\FF[\algA^m]$ has {\it multidegree} $\mdeg(w)=(\de_1,\ldots,\de_m)\in\NN^m$, where $\de_r$ is the number of letters of $w$ lying in the set $\{x_{r1},\ldots, x_{rn}\}$ and $\NN=\{0,1,2,\ldots\}$. As an example, $\mdeg(x_{21}x_{32}x_{22})=(0,2,1)$ for $m=3$. For short, we denote the multidegree $(1,\ldots,1)\in\NN^m$ by $1^m$. If $f\in\FF[\algA^m]$ is a linear combination of monomials of the same multidegree $\un{d}$, then we say that $f$ is $\NN^m$-{\it homogeneous} of multidegree $\un{d}$. In other words, we have defined the $\NN^m$-grading of $\FF[\algA^m]$ by multidegrees. Since $\FF$ is infinite, the algebra of invariants $\FF[\algA^m]^G$ also has the $\NN^m$-grading by multidegrees. An $\NN$-homogeneous element $f\in\FF[\algA^m]$ of multidegree $(d_1,\ldots,d_m)$ with $d_1,\ldots,d_m\in\{0,1\}$ is called {\it multilinear}. For short, we denote by  $x_{r_1,i_1}\cdots\widehat{x_{r_l,i_l}}\cdots x_{r_k,i_k}$ the monomial $x_{r_1,i_1}\cdots x_{r_{l-1},i_{l-1}} x_{r_{l+1},i_{l+1}} \cdots x_{r_k,i_k}$ from   $\FF[\algA^m]$, where $1\leqslant l\leqslant k$. Given a subset $S\subset\algA$, we denote by $\alg\{S\}$ the subalgebra of $\algA$  (without unity in general) generated by $S$.

Given a vector $\un{r}=(r_1,\ldots,r_k)$, we write $\#\un{r}=k$ and $|\un{r}|=r_1+\cdots + r_k$. Denote $\FF^{\times}=\FF\backslash\{0\}$.

%========================================================
%S======================================================
\section{Preliminaries on invariants}\label{section_prelim}

\subsection{Generic elements} To explicitly define the action of $G$ on $\FF[\algA^m]$ consider  the algebra $\widehat{\algA}_m=\algA \otimes_{\FF} \FF[\algA^m]$, which is a $G$-module, where the multiplication and the $G$-action are defined as follows: $(a\otimes f)(b\otimes h)=ab \otimes fh$ and $g\bullet (a\otimes f) = g \cdot a \otimes f$ for all $g\in G$, $a,b\in\algA$ and $f,h\in\FF[\algA^m]$. Define by 
$$X_r = \left(\begin{array}{c}
x_{r1} \\
\vdots \\
x_{rn} \\
\end{array}\right):=e_1 \otimes x_{r1} + \cdots + e_n \otimes x_{rn}$$
the {\it generic elements} of $\widehat{\algA}_m$, where $1\leqslant r\leqslant m$. In particular, we have 
$$g\bullet X_r = 
\left(\begin{array}{c}
g\cdot x_{r1} \\
\vdots \\
g\cdot x_{rn} \\
\end{array}\right) 
\in\widehat{\algA}_m.$$% 
For $g\in G\leqslant \GL_n$ we write $[g]$ for the corresponding $n\times n$ matrix and $(g)_{ij}$ for the $(i,j)^{\rm th}$ entry of $[g]$. By straightforward calculations we can see that 
\begin{eq}\label{eq_action}
g\bullet X_r = [g]^{-1}X_r.
\end{eq}%

Denote by $\algX{m}$ the subalgebra of $\widehat{\algA}_m$  generated by the generic elements $X_1,\ldots,X_m$ and the unity $1$. Any product of the generic elements is called a word of $\algX{m}$. Since $G\leqslant \Aut(\algA)$, we  have that 
\begin{eq}\label{eq_action_prod}
g\bullet (FH) = (g\bullet F)(g\bullet H)
\end{eq}%
for all $F,H$ from $\algX{m}$.

%========================================================
\subsection{Operator traces}\label{section_tr}

%To expand these constructions, we denote by $\FF\LA Y\RA_m$ the absolutely free algebra in letters $y_1,\ldots,y_m$ and for $f\in \FF\LA Y\RA_m$, $\un{a}\in\algA^m$ define $f(\un{a})\in\algA$ as the result of substitutions $y_1\to a_1,\ldots,y_m\to a_m$ in $f$. For a symbol $P\in\{L,R\}$ and $a\in\algA$ we write $P_a:\algA\to \algA$ for the operator of the left or right multiplication by $a$. As usually, the composition of maps $S_1,S_2:\algA\to \algA$ is denoted by $S_1\circ S_2 (a)=S_1(S_2(a))$, $a\in\algA$. 

%------------------------------------------------------
%\begin{defin}\label{def1} For $f_1,\ldots,f_k$ from $\FF\LA Y\RA_m$ and $P^1,\ldots,P^k$ from the set of symbols $\{L,R\}$ we  define the {\it operator-trace} $\tr_{\un{P}}(\un{f}):\algA^m\to \FF$, where  $\un{f}=(f_1,\ldots,f_k)$ and $\un{P}=(P^1,\ldots,P^k)$, by 
%$$\tr\nolimits_{\un{P}}(\un{f})(\un{a}) = \tr(P^1_{f_1(\un{a})} \circ \cdots \circ P^k_{f_k(\un{a})}).$$
%\end{defin}

%It is easy to see that $\tr\nolimits_{\un{P}}(\un{f}) \in \FF[\algA^m]$. Moreover, it is well known that the operator trace is an invariant from  $I_m(\algA)$. For the sake of completeness, we present the proof in Lemma~\ref{lemma_Tr_inv} (see below).

For $a\in\algA$ denote by $L_a:\algA \to\algA$ and  $R_a:\algA \to\algA$  the operators of the left and right multiplication by $a$, respectively. Then define the {\it left operator trace} $\tr_{\rm L}:\algA \to \FF$ by $\tr_{\rm L}(a)=\tr(L_a)$ and the {\it right operator trace} $\tr_{\rm R}:\algA \to \FF$ by $\tr_{\rm R}(a)=\tr(R_a)$. 

To expand these constructions, we denote by $\FF\LA \chi_0,\ldots,\chi_m\RA$ the absolutely free unital algebra in letters $\chi_0,\chi_1,\ldots,\chi_m$ and for $f\in \FF\LA \chi_0,\ldots,\chi_m\RA$, $\un{a}=(a_0,\ldots,a_m)\in\algA^{m+1}$ define $f(\un{a})\in\algA$ as the result of substitutions $\chi_0\to a_0,\, \chi_1\to a_1,\ldots,\chi_m\to a_m$ in $f$. For $h\in\FF\LA \chi_0,\ldots,\chi_m\RA$ denote by $L_h,R_h:\FF\LA \chi_0,\ldots,\chi_m\RA \to \FF\LA \chi_0,\ldots,\chi_m\RA$ the operators of the left and right multiplication by $h$, respectively.   As usually, the composition of maps $P_1,P_2:\algA\to \algA$ is denoted by $P_1\circ P_2 (a)=P_1(P_2(a))$, $a\in\algA$. Similar notation we use for composition of maps $P_1,P_2:\FF\LA \chi_0,\ldots,\chi_m\RA \to \FF\LA \chi_0,\ldots,\chi_m\RA$. For a symbol $P\in\{L,R\}$, we write 
\begin{enumerate}
\item[$\bullet$] $P_a:\algA\to\algA$ for the operator of left or right multiplication by $a\in\algA$;

\item[$\bullet$] $P_h:\FF\LA \chi_0,\ldots,\chi_m\RA \to \FF\LA \chi_0,\ldots,\chi_m\RA$ for the operator of left or right multiplication by $h\in\FF\LA \chi_0,\ldots,\chi_m\RA$.
\end{enumerate}

\noindent{}The following definition of the operator trace generalizes definitions of the left and right operator traces.

%------------------------------------------------------    
\begin{defin}\label{def1}
Assume that $m\geqslant1$ and $h\in \FF\LA \chi_0,\ldots,\chi_m\RA$ is homogeneous of degree 1 in $\chi_{0}$, i.e., each monomial of $h$ contains $\chi_{0}$ exactly once. Then 
\begin{enumerate}
\item[$\bullet$] for every $\un{a}\in\algA^m$ define the linear operator $h(\,\cdot\,,\un{a}):\algA\to\algA$  by the following equality: $h(\,\cdot\,,\un{a})(b) = h(b,a_1,\ldots, a_m)$ for all $b\in\algA$;

\item[$\bullet$]  define {\it operator trace} as follows: $\tr(h):\algA^m \to \FF$, where $\tr(h)(\un{a})=\tr(h(\,\cdot\,,\un{a}))$ for all $\un{a}\in\algA^m$.
\end{enumerate}
\end{defin}

As an example, $\tr(\chi_0)=n$, $\tr_{\rm L}=\tr(\chi_1\chi_0)$, and $\tr_{\rm R}=\tr(\chi_0\chi_1)$. 

%------------------------------------------------------ 
\begin{remark}\label{remark_key} Assume that $h\in \FF\LA \chi_0,\ldots,\chi_m\RA$ is homogeneous of degree 1 in $\chi_{0}$.
\begin{enumerate}
\item[(a)] It is easy to see that $\tr(h) \in \FF[\algA^m]$.

\item[(b)] It is well known that $\tr(h)$ is an invariant from $I_m(\algA)$ (for example, see Lemma~\ref{lemma_Tr_inv}). 

\item[(c)] If $h$ has multidegree $(1,\de_1,\ldots,\de_m)\in\NN^{m+1}$, then $\tr(h)$ is $\NN^m$-homogeneous of multidegree $(\de_1,\ldots,\de_m)$.
\end{enumerate}
\end{remark}

%------------------------------------------------------
\begin{defin}\label{def3} Denote by $\tr(\algA)_m$ the subalgebra of $I_m(\algA)$ generated by $1$ and operator traces $\tr(h)$ for all that $h\in \FF\LA \chi_0,\ldots,\chi_m\RA$ homogeneous of degree 1 in $\chi_{0}$.
\end{defin}

Operator traces can also be defined in the following way: 

%------------------------------------------------------    
\begin{notation}\label{notation2}
\begin{enumerate}
\item[(a)] Assume that $P^1,\ldots,P^k\in\{L,R\}$ are symbols, $h_1,\ldots,h_k\in \FF\LA \chi_1,\ldots,\chi_m\RA$ and $k>0$. Define a {\it polynomial map} $\tr\!\big(P^1_{h_1} \circ \cdots \circ P^k_{h_k}\big):\algA^m \to \FF$ by 
$$\un{a} \to \tr\!\big(P^1_{h_1(\un{a})} \circ \cdots \circ P^k_{h_k(\un{a})}\big) \text{ for all }\un{a}\in\algA^m.$$

\item[(b)] We write $\tr(P)$ for a linear combination $P$ of compositions 
$\{P^1_{h_1} \circ \cdots \circ P^k_{h_k}\}$ as in item (a). 
\end{enumerate}
\end{notation}

%------------------------------------------------------ 
\begin{remark}\label{remark_equiv}
Given $h= P^1_{h_1} \circ \cdots \circ P^k_{h_k} (\chi_0) \in  \FF\LA \chi_0,\ldots,\chi_m\RA$ for some symbols 
$P^1,\ldots,P^k\in\{L,R\}$,
$h_1,\ldots,h_k\in \FF\LA \chi_1,\ldots,\chi_m\RA$ and $k>0$,  we have
$$\tr(h)=\tr(P^1_{h_1} \circ \cdots \circ P^k_{h_k}).$$
\end{remark}

The third way to define some operator traces is the following one: 

%------------------------------------------------------    
\begin{notation}\label{notation3}
For a symbol $P\in\{L,R\}$ the linear map $\tr_{P}:\algA\to\FF$ can be extended to the map $\tr_{P}:\widehat{\algA}_m\to\FF[\algA^m]$ by the linearity.
\end{notation}

%------------------------------------------------------
\begin{lemma}\label{lemma3}
For every $h\in \FF\LA \chi_1,\ldots,\chi_m\RA$ we have
$$\tr\nolimits_{\rm L}(h(X_1,\ldots,X_m)) = \tr(h(\chi_1,\ldots,\chi_m)\chi_0)
\;\; \text{ and } \;\;
\tr\nolimits_{\rm R}(h(X_1,\ldots,X_m)) = \tr(\chi_0 h(\chi_1,\ldots,\chi_m)).$$
\end{lemma}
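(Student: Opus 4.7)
The plan is to verify both identities as polynomial functions $\algA^m \to \FF$; since $\FF$ is infinite, equality of functions implies equality in $\FF[\algA^m]$, so it suffices to compare the two sides pointwise for all $\un{a}\in\algA^m$.

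For the first identity, I would unfold the left-hand side via Notation~\ref{notation3}. Write $h(X_1,\ldots,X_m)\in\widehat{\algA}_m=\algA\otimes\FF[\algA^m]$ in the form $\sum_i e_i\otimes f_i$ with $f_i\in\FF[\algA^m]$ (possible because $\{e_1,\ldots,e_n\}$ is an $\FF$-basis of $\algA$). By the $\FF[\algA^m]$-linearity of the extended map,
\[
\tr\nolimits_{\rm L}(h(X_1,\ldots,X_m))=\sum_i \tr(L_{e_i})\,f_i.
\]
For each $\un{a}\in\algA^m$ there is an evaluation algebra homomorphism $\widehat{\algA}_m\to\algA$ defined on tensors by $a\otimes f\mapsto f(\un{a})\,a$; it sends the generic element $X_r$ to $\sum_j x_{rj}(\un{a})\,e_j = a_r$, and since $h$ is a word, it sends $h(X_1,\ldots,X_m)$ to $h(\un{a})\in\algA$. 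Consequently $\sum_i e_i\,f_i(\un{a}) = h(\un{a})$, and by linearity of the ordinary matrix trace together with $L$,
\[
\tr\nolimits_{\rm L}(h(X_1,\ldots,X_m))(\un{a}) = \tr\!\Bigl(L_{\sum_i e_i f_i(\un{a})}\Bigr) = \tr(L_{h(\un{a})}).
\]

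Next I would unfold the right-hand side via Definition~\ref{def1} applied to $\tilde h := h(\chi_1,\ldots,\chi_m)\chi_0$, which is homogeneous of degree one in $\chi_0$ since $h$ involves only $\chi_1,\ldots,\chi_m$. For any $b\in\algA$, the substitution $\chi_0\mapsto b$, $\chi_r\mapsto a_r$ yields $\tilde h(b,a_1,\ldots,a_m) = h(\un{a})\cdot b$, so the linear operator $\tilde h(\,\cdot\,,\un{a}):\algA\to\algA$ coincides with $L_{h(\un{a})}$. Therefore $\tr(\tilde h)(\un{a}) = \tr(L_{h(\un{a})})$, which matches the evaluation of the left-hand side obtained above.

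The second identity follows by the mirror argument, with $\chi_0 h(\chi_1,\ldots,\chi_m)$ in place of $h(\chi_1,\ldots,\chi_m)\chi_0$ and $R$ in place of $L$: the associated operator at $\un{a}$ becomes $R_{h(\un{a})}$, and one uses $\tr\nolimits_{\rm R}=\tr(R_{(\cdot)})$ in the expansion from Notation~\ref{notation3}. No step here is conceptually hard; the only genuine obstacle is bookkeeping, namely distinguishing three different multiplications (in $\algA$, in the absolutely free algebra $\FF\LA\chi_0,\ldots,\chi_m\RA$, and in $\widehat{\algA}_m$) and confirming that the substitution $\chi_r\mapsto X_r$ into the free algebra commutes with the evaluation $\widehat{\algA}_m\to\algA$ given by $\un{a}$, so that the word $h$ is preserved under both substitutions.
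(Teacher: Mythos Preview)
Your proof is correct and follows essentially the same route as the paper: both evaluate each side at an arbitrary $\un{a}\in\algA^m$ and reduce to $\tr(L_{h(\un{a})})$ (respectively $\tr(R_{h(\un{a})})$). The paper first reduces to the case where $h$ is a monomial by linearity, whereas you work with the evaluation homomorphism $\widehat{\algA}_m\to\algA$ directly; note that your phrase ``since $h$ is a word'' is a small slip (the lemma is stated for arbitrary $h$), but the conclusion still holds because that evaluation map is an algebra homomorphism and hence sends $h(X_1,\ldots,X_m)$ to $h(\un{a})$ for any $h$, not just monomials.
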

\begin{proof} By the linearity, we can assume that $h$ is a monomial. Consider some $\un{a}=(a_1,\ldots,a_m)\in\algA^m$. By the definition of the product in $\widehat{\algA}_m$, we have 
$\tr_{\rm L}(h(X_1,\ldots,X_m))(\un{a}) = \tr_{\rm L}(h(\un{a}))$. On the other hand, $\tr(h(\chi_1,\ldots,\chi_m)\chi_0)(\un{a})=\tr(L_{h(\un{a})})$. The first claim of the lemma is proved. The second equality can be proved similarly.
\end{proof}

%========================================================
%========================================================
\section{Traces for algebras}\label{section_trace_alg}

%------------------------------------------------------    
\begin{lemma}\label{lemma_Tr_inv}
Assume $h\in \FF\LA \chi_0,\ldots,\chi_m\RA$ is homogeneous of degree 1 in $\chi_{0}$. Then the operator trace $\tr(h)$ lies in $I_m(\algA)$.
\end{lemma}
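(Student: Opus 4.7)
The plan is to show directly that $\tr(h)(g\cdot \un{a}) = \tr(h)(\un{a})$ for every $g\in \Aut(\algA)$ and every $\un{a}\in\algA^m$, which by the reformulation of invariance given in Section~\ref{section_alg_inv} is equivalent to $\tr(h)\in I_m(\algA)$. The key observation is that the linear endomorphism $h(\,\cdot\,,g\cdot\un{a}):\algA\to\algA$ is conjugate inside $\GL(\algA)$ to $h(\,\cdot\,,\un{a})$ via $g$ itself, so its trace is unchanged.

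First I would record the routine fact that since $g\in\Aut(\algA)$ is an algebra automorphism, it intertwines substitution into any element of the free algebra: for every $f\in \FF\LA \chi_0,\ldots,\chi_m\RA$ and every $b_0,b_1,\ldots,b_m\in\algA$,
$$g\cdot f(b_0,b_1,\ldots,b_m) = f(g\cdot b_0,\, g\cdot b_1,\ldots, g\cdot b_m).$$
This is immediate by linearity and by induction on the length of a monomial in $f$, using that $g$ preserves the product. Applying this to our $h$ and setting $c=g^{-1}\cdot b$ for an arbitrary $b\in\algA$, one obtains
$$h(\,\cdot\,,g\cdot\un{a})(b) = h(b,\,g\cdot a_1,\ldots,g\cdot a_m) = g\cdot h(g^{-1}\cdot b,\,a_1,\ldots,a_m) = \bigl(g\circ h(\,\cdot\,,\un{a})\circ g^{-1}\bigr)(b),$$
where in the last equality I use that $h$ is homogeneous of degree $1$ in $\chi_0$, so $h(\,\cdot\,,\un{a})$ really is well-defined as a linear operator $\algA\to\algA$.

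Then I would conclude by conjugation-invariance of the trace of a linear operator: identifying $g$ with its matrix in the fixed basis $\{e_1,\ldots,e_n\}$,
$$\tr(h)(g\cdot\un{a}) = \tr\bigl(h(\,\cdot\,,g\cdot\un{a})\bigr) = \tr\bigl(g\circ h(\,\cdot\,,\un{a})\circ g^{-1}\bigr) = \tr\bigl(h(\,\cdot\,,\un{a})\bigr) = \tr(h)(\un{a}),$$
which proves the invariance. There is no real obstacle; the only step that requires a small amount of care is confirming that the automorphism property of $g$ propagates to arbitrary words in $\FF\LA \chi_0,\ldots,\chi_m\RA$ (as opposed to just to the product $ab$), and this is a one-line induction on word length that uses bilinearity of the multiplication together with the defining identity $g\cdot(ab)=(g\cdot a)(g\cdot b)$.
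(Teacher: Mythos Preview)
Your proof is correct and follows essentially the same approach as the paper: both establish the identity $h(b,\,g\cdot\un{a}) = g\cdot h(g^{-1}\cdot b,\,\un{a})$ from the automorphism property of $g$, and then deduce invariance of the trace. The only difference is cosmetic: you phrase this as the conjugation $h(\,\cdot\,,g\cdot\un{a}) = g\circ h(\,\cdot\,,\un{a})\circ g^{-1}$ and invoke conjugation-invariance of the trace directly, whereas the paper expands the trace as $\sum_i (h(e_i,\un{a}))_i$ in the fixed basis and carries out the same computation in coordinates.
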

\begin{proof} The statement of the lemma is equivalent to the following claim:
\begin{eq}\label{eq_claim1}
\tr(h)(g\cdot\un{a}) = \tr(h)(\un{a})
\end{eq}%
for every $\un{a}\in \algA^m$ and $g\in \Aut(\algA)$. We have 
$$\tr(h)(\un{a}) = \tr(h(\,\cdot\,,\un{a})) = \sum\limits_{i=1}^n \big(h(e_i,\un{a})\big)_i 
\quad \text{ and }\quad
\tr(h)(g\cdot\un{a}) =  \sum\limits_{i=1}^n \big(h(e_i,g\cdot\un{a})\big)_i.$$
Since $g$ is an automorphism of $\algA$, we have $\big(h(e_i,g\cdot\un{a})\big)_i = \big( g\cdot h(g^{-1}\cdot e_i, \un{a}) \big)_i=\sum_{j=1}^n (g)_{ij} \big(h(g^{-1}\cdot e_i,g\cdot\un{a})\big)_j$. Thus, 
$$\tr(h)(g\cdot\un{a}) = \sum_{j,k=1}^n \Big( \sum_{i=1}^n (g^{-1})_{ki} (g)_{ij} \Big) \big(h(e_k,g\cdot\un{a})\big)_j = \sum_{j=1}^{n} \big(h(e_j,g\cdot\un{a})\big)_j.$$
Claim~(\ref{eq_claim1}) is proven.
\end{proof}

Assume that the tableau of multiplication of the algebra $\algA$ is $M=(M_{ij})_{1\leqslant i,j\leqslant n}$, i.e., $e_ie_j=M_{ij}$ in $\algA$. Denote
$$M_{ij}=\sum_{l=1}^n M_{ijl} e_l$$
for some $ M_{ijl}\in\FF$. For every $1\leqslant i\leqslant n$ consider the following $n\times n$ matrices over $\FF$: 
$$\ML{i}=(M_{ijl})_{1\leqslant l,j\leqslant n} \;\; \text{ and  }\;\;
\MR{i}=(M_{jil})_{1\leqslant l,j\leqslant n}.$$%

%------------------------------------------------------ 
\begin{prop}\label{prop_trace}
Assume that $m\geqslant1$ and a homogeneous monomial $h\in \FF\LA \chi_0,\ldots,\chi_m\RA$ of degree 1 in $\chi_{0}$ satisfies the equality $h= P^1_{\chi_{r_1}} \circ \cdots \circ P^k_{\chi_{r_k}} (\chi_0)$ for some symbols $P^1,\ldots,P^k\in\{L,R\}$, $1\leqslant r_1,\ldots,r_k\leqslant m$ and $k>0$. Then 

$$\tr(h)=\sum_{1\leqslant i_1,\ldots,i_k\leqslant n} \tr\!\Big(M_{P^1}^{(i_1)} \cdots M_{P^k}^{(i_k)}\Big) x_{r_1,i_1} \cdots x_{r_k,i_k}.$$
\end{prop}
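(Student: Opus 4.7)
The plan is to unpack the definition of the operator trace applied to the specific monomial $h$, translate multiplication operators into their matrix forms in the basis $\{e_1,\ldots,e_n\}$, and then expand and take trace.

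First I would observe that the substitution rule for $h$ gives
$$h(\,\cdot\,,\un{a}) \;=\; P^1_{a_{r_1}}\circ\cdots\circ P^k_{a_{r_k}}$$
as a linear operator on $\algA$, so by Definition~\ref{def1} we have $\tr(h)(\un{a})=\tr\bigl(P^1_{a_{r_1}}\circ\cdots\circ P^k_{a_{r_k}}\bigr)$. The task thus reduces to expressing the matrix of $P_a$ in the basis $\{e_1,\ldots,e_n\}$ for an arbitrary $a\in\algA$ and $P\in\{L,R\}$.

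Next I would perform the routine matrix computation. Writing $a=\sum_i (a)_i e_i$, one has $L_a(e_j)=\sum_{i,l} (a)_i M_{ijl}\,e_l$, so the $(l,j)$-entry of the matrix of $L_a$ equals $\sum_i (a)_i M_{ijl}$; by the definition of $\ML{i}$ this gives $L_a=\sum_{i=1}^n (a)_i \ML{i}$. The analogous calculation for $R_a$ yields $R_a=\sum_{i=1}^n (a)_i \MR{i}$. In a unified form, $P_a=\sum_{i=1}^n (a)_i M_P^{(i)}$ for $P\in\{L,R\}$.

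Substituting these expansions into the composition and using multilinearity of composition (and trace) gives
$$\tr(h)(\un{a}) \;=\; \sum_{i_1,\ldots,i_k=1}^n (a_{r_1})_{i_1}\cdots (a_{r_k})_{i_k}\,\tr\!\Big(M_{P^1}^{(i_1)}\cdots M_{P^k}^{(i_k)}\Big).$$
Finally, recognising $(a_r)_i = x_{r,i}(\un{a})$ and letting $\un{a}$ vary yields the claimed formula as an identity in $\FF[\algA^m]$.

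The only delicate point is bookkeeping of indices in the matrix of $L_a$ and $R_a$ (distinguishing the structure-constant indices from the row/column indices), so I would be explicit about which index of $M_{ijl}$ plays which role; everything else is a linearity-plus-substitution exercise.
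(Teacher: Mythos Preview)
Your proposal is correct and follows essentially the same approach as the paper: compute the matrix of $P_a$ in the basis $\{e_1,\ldots,e_n\}$ to obtain $P_a=\sum_i (a)_i M_P^{(i)}$, substitute into $\tr(P^1_{a_{r_1}}\circ\cdots\circ P^k_{a_{r_k}})$, expand by multilinearity, and identify $(a_r)_i$ with $x_{r,i}(\un{a})$. The paper writes the entry-wise formula $(P_a)_{lj}=\sum_i\alpha_i(M_P^{(i)})_{lj}$ and expands the trace via an explicit sum over indices $j_1,\ldots,j_k$, whereas you phrase the same computation as a matrix identity; the content is the same.
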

\begin{proof} Assume $a=\al_1 e_1+\cdots+\al_n e_n$ for some $\al_1,\ldots,\al_n\in\FF$. Denote 
$$L_a = ((L_a)_{ij})_{1\leqslant i,j\leqslant n} \;\; \text{ and } \;\; 
R_a = ((R_a)_{ij})_{1\leqslant i,j\leqslant n}, $$
where $(L_a)_{ij},(R_a)_{ij}\in\FF$. Then for $1\leqslant j\leqslant n$ we have
$$a e_j = \sum_{l=1}^n\sum_{i=1}^n \al_i M_{ijl} e_l  \;\; \text{ and } \;\;  
e_ja =  \sum_{l=1}^n\sum_{i=1}^n \al_i M_{jil} e_l .$$
Hence, 
\begin{eq}\label{eq_tr1}
(L_a)_{lj}=\sum_{i=1}^n \al_i M_{ijl} \;\; \text{ and } \;\; 
(R_a)_{lj}=\sum_{i=1}^n \al_i M_{jil}.
\end{eq}
\noindent{}Therefore, for every symbol $P\in\{L,R\}$ we have
\begin{eq}\label{eq_tr2}
(P_a)_{lj}=\sum_{i=1}^n \al_i (M_P^{(i)})_{lj}.
\end{eq}

Consider $\un{a}=(a_1,\ldots, a_m)\in\algA$ and let $a_s=\al_{s1}e_1 + \cdots +\al_{sn}e_n$ for some $\al_{s1},\ldots,\al_{sn}\in\FF$, where $1\leqslant s\leqslant m$. Then 
$$\begin{array}{rcl}
\tr(h)(\un{a}) &= &\tr(P^1_{a_{r_1}}\circ\cdots \circ P^k_{a_{r_k}}) \\
 &= &
\sum\limits_{1\leqslant j_1,\ldots,j_k\leqslant n} 
\Big(P^1_{a_{r_1}}\Big)_{j_1j_2}  \Big(P^2_{a_{r_1}}\Big)_{j_2j_3} 
 \cdots   \Big(P^k_{a_{r_k}}\Big)_{j_kj_1}. \\
\end{array}
$$
\noindent{}Applying formula~(\ref{eq_tr2}) we obtain
$$\begin{array}{rcl}
\tr(h)(\un{a}) &= &\sum\limits_{1\leqslant j_1,\ldots,j_k\leqslant n} \;\;
\sum\limits_{1\leqslant i_1,\ldots,i_k\leqslant n} 
\al_{r_1,i_1} \cdots \al_{r_k,i_k} 
\Big(M_{P^1}^{(i_1)}\Big)_{j_1j_2}  \Big(M_{P^2}^{(i_2)}\Big)_{j_2j_3} 
 \cdots   \Big(M_{P^k}^{(i_k)}\Big)_{j_kj_1} \\
 & = & \sum\limits_{1\leqslant i_1,\ldots,i_k\leqslant n} \tr\!\Big(M_{P^1}^{(i_1)} \cdots M_{P^k}^{(i_k)}\Big) \al_{r_1,i_1} \cdots \al_{r_k,i_k}.
\end{array}
$$
\noindent{}The claim of the proposition is proven.
\end{proof}

Proposition~\ref{prop_trace} (or, equivalently, see equalities~(\ref{eq_tr1})) implies that for all $1\leqslant r\leqslant m$ we have 
\begin{eq}\label{eq_trL_trR}
\tr(\chi_r\chi_0)=   \sum_{i,j=1}^n x_{ri} M_{ijj}
\;\; \text{ and } \;\; 
\tr(\chi_0\chi_r)= \sum_{i,j=1}^n x_{ri} M_{jij}.
\end{eq}

For every $1\leqslant i,i'\leqslant n$ consider the following $n\times n$ matrices over $\FF$: 
$$\MdL{i}{i'}=\Big( \sum_{t=1}^nM_{i i' t} M_{tjl} \Big)_{1\leqslant l,j\leqslant n} \;\; \text{ and  }\;\;
\MdR{i}{i'} = \Big( \sum_{t=1}^nM_{i i' t} M_{jtl} \Big)_{1\leqslant l,j\leqslant n}.$$

%------------------------------------------------------ 
\begin{prop}\label{prop_trace_double}
Assume that $m\geqslant1$ and a homogeneous monomial $h\in \FF\LA \chi_0,\ldots,\chi_m\RA$ of degree 1 in $\chi_{0}$ satisfies the equality $h= P^1_{h_1} \circ \cdots \circ P^k_{h_k} (\chi_0)$ for some symbols $P^1,\ldots,P^k\in\{L,R\}$ and monomials $h_1,\ldots,h_k\in \FF\LA \chi_1,\ldots,\chi_m\RA$ of degree 1 or 2, where $k>0$. For every $1\leqslant q\leqslant k$ denote 
\begin{enumerate}
\item[$\bullet$] $\un{i}_q=i_q$ and $w(q,\un{i}_q) = x_{r_q,i_q}$, in case $h_q=\chi_{r_q}$ for some 
$1\leqslant r_q\leqslant m$;

\item[$\bullet$] $\un{i}_q=\{i_q,i'_q\}$, $M_{P^q}^{(\un{i}_q)}=M_{P^q}^{(i_q,i'_q)}$ and $w(q,\un{i}_q) = x_{r_q,i_q}x_{r'_q,i'_q}$, in case $h_q=\chi_{r_q}\chi_{r'_q}$ for some 
$1\leqslant r_q,r'_q\leqslant m$.
\end{enumerate}

Then 

$$\tr(h)=\sum_{1\leqslant \un{i}_1,\ldots,\un{i}_k\leqslant n} \tr\!\Big(M_{P^1}^{(\un{i}_1)} \cdots M_{P^k}^{(\un{i}_k)}\Big) w(1,\un{i}_1) \cdots w(k,\un{i}_k),$$
\noindent{}where the condition $1\leqslant\{i_q,i'_q\}\leqslant n$ stands for the condition $1\leqslant i_q,i'_q \leqslant n$.
\end{prop}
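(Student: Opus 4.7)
The plan is to follow the same scheme as in the proof of Proposition~\ref{prop_trace}, replacing the single-letter identity~(\ref{eq_tr2}) by its two-letter analogue.

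The key preliminary observation is the following identity: for every $a=\sum_{i}\alpha_i e_i$ and $b=\sum_{i'}\beta_{i'}e_{i'}$ in $\algA$ and every symbol $P\in\{L,R\}$,
\begin{equation*}
(P_{ab})_{lj}=\sum_{i,i'=1}^n\alpha_i\beta_{i'}\bigl(M_P^{(i,i')}\bigr)_{lj}.
\end{equation*}
This follows at once from~(\ref{eq_tr2}) applied to each basis element $e_t$ arising in the expansion $ab=\sum_{i,i',t}\alpha_i\beta_{i'}M_{ii't}e_t$, combined with the definitions of $\MdL{i}{i'}$ and $\MdR{i}{i'}$ (for instance, $(\MdL{i}{i'})_{lj}=\sum_t M_{ii't}(\ML{t})_{lj}=\sum_t M_{ii't}M_{tjl}$, and similarly for $\MdR{i}{i'}$).

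With this identity in hand, I would evaluate $\tr(h)$ at an arbitrary point $\un{a}=(a_1,\ldots,a_m)$ with $a_s=\sum_j\alpha_{s,j}e_j$ by expanding the trace of the composition as a cyclic sum
\begin{equation*}
\tr\bigl(P^1_{h_1(\un{a})}\circ\cdots\circ P^k_{h_k(\un{a})}\bigr)=\sum_{j_1,\ldots,j_k=1}^n\bigl(P^1_{h_1(\un{a})}\bigr)_{j_1j_2}\cdots\bigl(P^k_{h_k(\un{a})}\bigr)_{j_kj_1},
\end{equation*}
and then substituting, factor by factor, either~(\ref{eq_tr2}) when $h_q=\chi_{r_q}$ or the new identity above when $h_q=\chi_{r_q}\chi_{r'_q}$. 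Collecting the scalar coefficients produces exactly $w(q,\un{i}_q)$ evaluated at $\un{a}$, while the remaining matrix entries assemble, after summing over $j_1,\ldots,j_k$, into $\tr\!\bigl(M_{P^1}^{(\un{i}_1)}\cdots M_{P^k}^{(\un{i}_k)}\bigr)$ by the cyclic definition of the matrix trace. Since $\un{a}$ is arbitrary and $\FF$ is infinite, the claimed polynomial identity in $\FF[\algA^m]$ follows.

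The main obstacle is purely bookkeeping: a single unified formula must simultaneously handle scalar indices $\un{i}_q=i_q$ (when $h_q$ is a single letter) and paired indices $\un{i}_q=\{i_q,i'_q\}$ (when $h_q$ has length two), and one has to verify that the matching conventions $M_{P^q}^{(\un{i}_q)}$ and $w(q,\un{i}_q)$ interact correctly when the factors are concatenated. No new conceptual content is required beyond the preliminary identity for $P_{ab}$, which reduces the length-two case to the already-established length-one case treated in Proposition~\ref{prop_trace}.
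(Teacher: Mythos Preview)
Your proposal is correct and follows essentially the same approach as the paper's proof: establish the two-letter analogue of~(\ref{eq_tr2}), expand the trace of the composition as a cyclic sum, substitute the appropriate identity for each factor, and collect. The only cosmetic difference is that you derive the preliminary identity $(P_{ab})_{lj}=\sum_{i,i'}\alpha_i\beta_{i'}(M_P^{(i,i')})_{lj}$ by reducing to~(\ref{eq_tr2}) via the expansion $ab=\sum_t(\cdots)e_t$, whereas the paper computes $(aa')e_j$ and $e_j(aa')$ directly from scratch; both arrive at the same formula~(\ref{eq_tr2_double}).
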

\begin{proof} Assume $a=\al_{1} e_1+\cdots+\al_{n} e_n$ and $a'=\al'_{1} e_1+\cdots+\al'_{n} e_n$ for some $\al_{1},\ldots,\al_{n}$, $\al'_{1},\ldots,\al'_{n}$ from $\FF$. Denote 
$$L_{aa'} = ((L_{aa'})_{ij})_{1\leqslant i,j\leqslant n} \;\; \text{ and } \;\; 
R_{aa'} = ((R_{aa'})_{ij})_{1\leqslant i,j\leqslant n}, $$
where $(L_{aa'})_{ij},(R_{aa'})_{ij}\in\FF$. Then for $1\leqslant j\leqslant n$ we have
$$(aa') e_j = \sum_{1\leqslant i,i'\leqslant n}\al_{i}\al'_{i'} (e_{i}e_{i'})e_j =
\sum_{1\leqslant i,i',t\leqslant n} \al_{i}\al'_{i'} M_{ii't}\, e_t e_j=
\sum_{1\leqslant i,i',t,l\leqslant n} \al_{i}\al'_{i'} M_{ii't}\, M_{tjl}\, e_l,$$
$$e_j (aa')  = \sum_{1\leqslant i,i'\leqslant n}\al_{i}\al'_{i'} e_j(e_{i}e_{i'}) =
\sum_{1\leqslant i,i',t\leqslant n} \al_{i}\al'_{i'} M_{ii't}\, e_je_t =
\sum_{1\leqslant i,i',t,l\leqslant n} \al_{i}\al'_{i'} M_{ii't}\, M_{jtl}\, e_l.$$

Hence, 
\begin{eq}\label{eq_tr1_double}
(L_{aa'})_{lj}=\sum_{1\leqslant i,i',t\leqslant n}  \al_{i}\al'_{i'} M_{ii't}\, M_{tjl}
\;\; \text{ and } \;\; 
(R_{aa'})_{lj}=\sum_{1\leqslant i,i',t\leqslant n} \al_{i}\al'_{i'} M_{ii't}\, M_{jtl}
\end{eq}
\noindent{}Therefore, for every symbol $P\in\{L,R\}$ we have
\begin{eq}\label{eq_tr2_double}
(P_{aa'})_{lj}=\sum_{1\leqslant i,i'\leqslant n} \al_{i}\al'_{i'} \Big(M_{P}^{(i,i')}\Big)_{lj}.
\end{eq}

Consider $\un{a}=(a_1,\ldots, a_m)\in\algA$ and let $a_s=\al_{s1}e_1 + \cdots +\al_{sn}e_n$ for some $\al_{s1},\ldots,\al_{sn}\in\FF$, where $1\leqslant s\leqslant m$. Then 
$$\begin{array}{rcl}
\tr(h)(\un{a}) &= &\tr\big(P^1_{h_1(\un{a})}\circ\cdots \circ P^k_{h_k(\un{a})}\big) \\
 &= &
\sum\limits_{1\leqslant j_1,\ldots,j_k\leqslant n} 
\Big(P^1_{h_1(\un{a})}\Big)_{j_1j_2}  \Big(P^2_{h_2(\un{a})}\Big)_{j_2j_3} 
 \cdots   \Big(P^k_{h_k(\un{a})}\Big)_{j_kj_1} \\
\end{array}
$$%

\noindent{}For every $1\leqslant q\leqslant k$ denote 
\begin{enumerate}
\item[$\bullet$] $\al(q,\un{i}_q) = \al_{r_q,i_q}$, in case $h_q=\chi_{r_q}$ for some 
$1\leqslant r_q\leqslant m$;

\item[$\bullet$] $\al(q,\un{i}_q) = \al_{r_q,i_q} \al_{r'_q,i'_q}$, in case $h_q=\chi_{r_q}\chi_{r'_q}$ for some 
$1\leqslant r_q,r'_q\leqslant m$.
\end{enumerate}%

\noindent{}Since $h_1,\ldots,h_k$ are monomials of degree 1 or 2, we apply formulas~(\ref{eq_tr2}) and~(\ref{eq_tr2_double}) to obtain
$$\begin{array}{rcl}
\tr(h)(\un{a}) &= &\sum\limits_{1\leqslant j_1,\ldots,j_k\leqslant n} \;\;
\sum\limits_{1\leqslant \un{i}_1,\ldots,\un{i}_k\leqslant n} 
\al(1,\un{i}_1) \cdots \al(k,\un{i}_k) 
\Big(M_{P^1}^{(\un{i}_1)}\Big)_{j_1j_2}  \Big(M_{P^2}^{(\un{i}_2)}\Big)_{j_2j_3} 
 \cdots   \Big(M_{P^k}^{(\un{i}_k)}\Big)_{j_kj_1} \\
 & = & \sum\limits_{1\leqslant \un{i}_1,\ldots,\un{i}_k\leqslant n} \tr\!\Big(M_{P^1}^{(\un{i}_1)} \cdots M_{P^k}^{(\un{i}_k)}\Big) \al(1,\un{i}_1) \cdots \al(k,\un{i}_k) .
\end{array}
$$
\noindent{}The claim of the proposition is proven.
\end{proof}

%========================================================
%S2======================================================
\section{Examples}\label{section_example}

The results from this section are well known. We present complete proofs for the sake of completeness.

%========================================================
\subsection{Matrix invariants} \label{section_example_matrix}

The general linear group $\GL_n$ acts on the algebra $M_n=M_n(\FF)$ of  $n\times n$ matrices by conjugations: $g\cdot A = gAg^{-1}$ for all $A\in M_n$ and $g\in\GL_n$. It is well known that the group of automorphisms $\Aut(M_n)$ is $\PGL_n= \GL_n / \,\FF^{\times}$, where the action of $\PGL_n$ on $M_n$ is given by $\ov{g}\cdot A = gAg^{-1}$ for $g\in\GL_n$. Obviously, $\FF[M_n^m]^{\PGL_n} = \FF[M_n^m]^{\GL_n}$. 

For short, in this section  we denote $\LA i,i'\RA = (i-1)n + i'$. 
Consider the basis $\{e_j\,|\,1\leqslant j\leqslant n^2\}$ of $M_n$ defined by $E_{ii'}=e_{\LA i,i'\RA}$ for all $1\leqslant i,i'\leqslant n$. Since $\widehat{(M_n)}_m\simeq M_n(\FF[M_n^m])$, we can identify the generic element $X_r$ with the $n\times n$ matrix $(x_{r,\LA i,i'\RA})_{1\leqslant i,i'\leqslant n}$, i.e., the product of the generic elements in $\widehat{(M_n)}_m$ is the product of the corresponding matrices.

%------------------------------------------------------ 
\begin{prop}\label{prop_M}
The Artin--Procesi--Iltyakov Equality holds for $M_n^m$ for all $m>0$, in case $\Char{\FF}=0$ or $\Char{\FF}>n$. 
\end{prop}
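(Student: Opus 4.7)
The plan is to combine two ingredients: the classical Sibirskii--Procesi theorem, already cited in the introduction, which asserts that $I_m(M_n)$ is generated by matrix traces of monomials in the generic matrices $X_1,\ldots,X_m$; and the straightforward observation that each such matrix trace coincides, up to the scalar $n$, with a specific operator trace in the sense of Definition~\ref{def1}. Since Lemma~\ref{lemma_Tr_inv} already gives the easy inclusion $\Tr(M_n)_m\subseteq I_m(M_n)$, the entire task reduces to proving the reverse inclusion, and for this it suffices to exhibit every generator $\tr(X_{i_1}\cdots X_{i_k})$ of $I_m(M_n)$ as an element of $\Tr(M_n)_m$.

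The key computation is the identification of the left multiplication operator trace. For any $b\in M_n$, the operator $L_b:M_n\to M_n$ sends the basis element $E_{ij}$ to $bE_{ij}$, whose coefficient at $E_{ij}$ equals $b_{ii}$. Summing over $i,j$ yields $\tr(L_b)=n\cdot\tr(b)$. Composing, $L_{a_1}\circ\cdots\circ L_{a_k}=L_{a_1\cdots a_k}$, so
\[
\tr\bigl(L_{a_1}\circ\cdots\circ L_{a_k}\bigr)=n\cdot\tr(a_1\cdots a_k).
\]
Now take the word $h=\chi_{i_1}\chi_{i_2}\cdots\chi_{i_k}\chi_0 \in \FF\langle\chi_0,\ldots,\chi_m\rangle$, which is homogeneous of degree $1$ in $\chi_0$. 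By Definition~\ref{def1}, the corresponding operator is $h(\,\cdot\,,\underline{a})=L_{a_{i_1}\cdots a_{i_k}}$, and therefore $\tr(h)(\underline{a})=n\cdot\tr(a_{i_1}\cdots a_{i_k})$; equivalently, $\tr(h)=n\cdot\tr(X_{i_1}\cdots X_{i_k})$ as polynomial functions on $M_n^m$.

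Under the hypothesis $\Char\FF=0$ or $\Char\FF>n$ the integer $n$ is invertible in $\FF$, so I can rescale and obtain
\[
\tr(X_{i_1}\cdots X_{i_k})=\tfrac{1}{n}\,\tr(h)\in\Tr(M_n)_m.
\]
Since such elements generate $I_m(M_n)$ by the Sibirskii--Procesi theorem, this establishes $I_m(M_n)\subseteq\Tr(M_n)_m$ and hence the equality. There is no real obstacle: the only subtlety is precisely where the characteristic hypothesis is used, namely to invert $n$, which is indispensable because $\Tr(M_n)_m$ would otherwise miss the ordinary trace $\tr(X_1)$ (the left and right operator traces both equal $n\cdot\tr$, and in characteristic dividing $n$ they vanish identically).
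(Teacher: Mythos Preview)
Your proof is correct and follows essentially the same route as the paper: both compute that the left-multiplication operator trace on $M_n$ equals $n$ times the ordinary matrix trace (you do this directly on the basis $E_{ij}$, the paper via the structure constants and formula~\eqref{eq_trL_trR}), and then invoke the classical generation theorem for $\FF[M_n^m]^{\GL_n}$. One small point: for the case $\Char\FF>n$ the relevant reference is Donkin~\cite{Donkin92a} rather than Sibirskii--Procesi, as the paper notes; otherwise your argument matches the paper's in every essential respect.
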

\begin{proof}

The tableau of multiplication $M= (M_{ij})_{1\leqslant i,j\leqslant n}$ for $M_n$ is given by the following equalities: $e_{\LA i,i'\RA} e_{\LA j,j'\RA} = E_{ii'} E_{jj'}=\de_{i'j} e_{\LA i,j'\RA}$. Thus,

\begin{eq}\label{eq_MMM}M_{\LA i,i'\RA\LA j,j'\RA\LA j,j'\RA}=\left\{
\begin{array}{rl}
1,& i'=j \text{ and } i=j \\
0,& \text{otherwise} \\
\end{array}
\right.
\end{eq}

\noindent{}Lemma~\ref{lemma3} together with formulas~(\ref{eq_trL_trR}) and (\ref{eq_MMM}) imply that
\begin{eq}\label{eq_Tr_M}
\tr\nolimits_{\rm L}(X_r)=\tr(\chi_r \chi_0) = \sum_{i,i',j,j'=1}^n x_{r,ii'} M_{\LA i,i'\RA\LA j,j'\RA\LA j,j'\RA} = \sum_{i,j'=1}^n x_{r,ii}=n (x_{r,11} + \cdots + x_{r,nn}) = n \tr(X_r),
\end{eq}%
\noindent{}where the map $\tr_{L}:M_n(\FF[M_n^m])\to\FF[M_n^m]$ was defined in Notation~\ref{notation3} and $\tr$ stands for the usual trace of an $n\times n$ matrix over a commutative associative ring. Consider $1\leqslant r_1,\ldots,r_k\leqslant m$. It follows from formula~(\ref{eq_Tr_M}) that 
\begin{eq}\label{eq_trM}
\tr\nolimits_{\rm L}(X_{r_1}\cdots X_{r_k}) = n \tr(X_{r_1}\cdots X_{r_k}).
\end{eq}%

\noindent{}Since Lemma~\ref{lemma3} implies 
$$\tr(w(\chi_1,\ldots,\chi_m)\chi_0) = \tr\nolimits_{\rm L}(X_{r_1}\cdots X_{r_k})$$
for $w = (\cdots(\chi_{r_1}\chi_{r_2})\cdots )\chi_{r_{m}}$ from $\FF\LA \chi_1,\ldots,\chi_m\RA$, 
it follows from formula~(\ref{eq_trM}) that $\Tr(\algA)_m$ contains all elements $\tr(X_{r_1}\cdots X_{r_k})$ for $1\leqslant r_1,\ldots,r_k\leqslant m$.

Since the algebra $\FF[M_n^m]^{\GL_n}$ is known to be generated by $\tr(X_{r_1}\cdots X_{r_k})$ for $1\leqslant r_1,\ldots,r_k\leqslant m$ (see~\cite{Procesi76} in case of $\Char{\FF}=0$ and \cite{Donkin92a} in case of $\Char{\FF}>n$), the required claim is proven.
\end{proof}

%========================================================
\subsection{Invariants of octonions}\label{section_example_octonion}

The {\it octonion algebra} $\OO=\OO(\FF)$, also known as the {\it split Cayley algebra}, is the vector space of all matrices

$$a=\matr{\al}{\uu}{\vv}{\be}\text{ with }\al,\be\in\FF \text{ and } \uu,\vv\in\FF^3,$$%
endowed with the following multiplication:
$$a a'  =
\matr{\al\al'+ \uu\cdot \vv'}{\al \uu' + \be'\uu - \vv\times \vv'}{\al'\vv +\be\vv' + \uu\times \uu'}{\be\be' + \vv\cdot\uu'},\text{ where } a'=\matr{\al'}{\uu'}{\vv'}{\be'},$$%
$\uu\cdot \vv = u_1v_1 + u_2v_2 + u_3v_3$ and $\uu\times \vv = (u_2v_3-u_3v_2, u_3v_1-u_1v_3, u_1v_2 - u_2v_1)$. For short, denote  $\cc_1=(1,0,0)$, $\cc_2=(0,1,0)$,  $\cc_3=(0,0,1)$, $\zero=(0,0,0)$ from $\FF^3$. Consider the following basis of $\OO$: $$e_1=\matr{1}{\zero}{\zero}{0},\; e_8=\matr{0}{\zero}{\zero}{1},\; e_{i+1}=\matr{0}{\cc_i}{\zero}{0},\;e_{i+4}=\matr{0}{\zero}{\cc_i}{0}$$
for $i=1,2,3$.  Define the linear function {\it trace} by $\tr(a) = \al+\be$.

The group of all automorphisms of the algebra $\OO$ is the simple exceptional algebraic group $\G=\Aut(\OO)$.

Recall that the coordinate ring of $\OO^m$ is the polynomial $\FF$-algebra $\FF[\OO^m]=\FF[x_{ki}\,|\,1\leqslant k\leqslant m,\; 1\leqslant i\leqslant 8]$, where $x_{ri}:\OO^n\to\FF$ is defined by $(a_1,\ldots,a_m)\to \al_{ri}$ for 
\begin{eq}\label{eq7}
a_r=\matr{\al_{r1}}{(\al_{r2}, \al_{r3}, \al_{r4})}{(\al_{r5}, \al_{r6}, \al_{r7})}{\al_{r8}}\in\OO.
\end{eq}%

\noindent{}Since $\OO(\FF[\OO^m]) = \widehat{\OO}_m$, we can consider the generic element $X_r$ as
$$X_r= \matr{x_{r1}}{(x_{r2}, x_{r3}, x_{r4})}{(x_{r5}, x_{r6}, x_{r7})}{x_{r8}}\in\OO(\FF[\OO^m]).$$%
\noindent{}The trace $\tr:\OO\to\FF$ can be naturally extended to the linear function $\tr:\OO(\FF[\OO^m])\to \FF$.

%------------------------------------------------------ 
\begin{prop}\label{prop_O}
The Artin--Procesi--Iltyakov Equality holds for $\OO^m$ for all $m>0$, in case $\Char{\FF}\neq2$. 
\end{prop}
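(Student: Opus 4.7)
The plan is to mirror the matrix-algebra argument of Proposition~\ref{prop_M}: first establish a scalar proportionality between the intrinsic octonion trace $\tr$ and the left operator trace $\tr_{\rm L}$, and then apply Lemma~\ref{lemma3} to recover every generator of $I_m(\OO)$ from a known generating set as an element of $\Tr(\OO)_m$.

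The first step is to prove the identity $\tr_{\rm L}(a) = 4\tr(a)$ for every $a \in \OO$. The most economical verification is a direct calculation from the multiplication table of $\OO$: for each basis vector $e_i$ one reads off the diagonal entries of $L_{e_i}$ in the basis $\{e_1,\ldots,e_8\}$ and sums them. Only $e_1$ and $e_8$ produce nonzero contributions (each equal to $4$), matching $4\tr(e_1) = 4\tr(e_8) = 4$, while all other basis vectors give $0 = 4\tr(e_i)$. Conceptually, both $\tr$ and $\tr_{\rm L}$ are $\G$-invariant linear functionals on $\OO$: the former because it is determined by the intrinsic minimal equation $x^2 - \tr(a)x + n(a) = 0$, and the latter because $\G = \Aut(\OO)$ acts by algebra automorphisms, so $L_{g\cdot a} = gL_a g^{-1}$ for every $g \in \G$. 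Since the space of such functionals is one-dimensional, the proportionality is forced, and the constant is pinned down by $\tr_{\rm L}(1) = \tr(\mathrm{id}_\OO) = 8 = 4 \cdot \tr(1)$.

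By $\FF[\OO^m]$-linearity the identity extends to $\widehat{\OO}_m$, so $\tr_{\rm L}(F) = 4\tr(F)$ in $\FF[\OO^m]$ for every $F \in \widehat{\OO}_m$. Given any $h \in \FF\langle \chi_1,\ldots,\chi_m\rangle$, Lemma~\ref{lemma3} then gives
$$\tr\bigl(h(\chi_1,\ldots,\chi_m)\chi_0\bigr) \;=\; \tr\nolimits_{\rm L}\bigl(h(X_1,\ldots,X_m)\bigr) \;=\; 4\,\tr\bigl(h(X_1,\ldots,X_m)\bigr),$$
where the leftmost expression is an operator trace by definition. Since $\Char \FF \neq 2$, dividing by $4$ places $\tr(h(X_1,\ldots,X_m))$ in $\Tr(\OO)_m$ for every nonassociative monomial $h$. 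Combining this with the generating-set theorems of Schwarz~\cite{schwarz1988} in characteristic zero and Zubkov--Shestakov~\cite{zubkov2018} in odd characteristic, which assert that $I_m(\OO)$ is generated by exactly such trace elements, yields $I_m(\OO) \subseteq \Tr(\OO)_m$; the reverse inclusion is Lemma~\ref{lemma_Tr_inv}. The whole reduction hinges on inverting the scalar $4$, so the hypothesis $\Char \FF \neq 2$ is essential; apart from that, the proof is a transparent transcription of the matrix case, with the cited generating-set theorem doing the essential external work.
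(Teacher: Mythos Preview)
Your proof is correct and follows essentially the same route as the paper: establish $\tr_{\rm L}=4\tr$ by reading off the multiplication table, extend to generic elements, invoke Lemma~\ref{lemma3}, and then quote the Schwarz/Zubkov--Shestakov generating sets. The only addition is your side remark that the proportionality $\tr_{\rm L}=c\cdot\tr$ is forced by the one-dimensionality of $\G$-invariant linear functionals on $\OO$, which is a pleasant conceptual gloss but not a different argument.
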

\begin{proof} Consider the tableau of multiplication $M= (M_{ij})_{1\leqslant i,j\leqslant 8}$ for $\OO$: 
\begin{center}
\begin{tabular}{ c|cccccccc } 
 $M_{ij}$ & $e_1$ & $e_2$ & $e_3$ & $e_4$ & $e_5$ & $e_6$ & $e_7$ & $e_8$\\ 
  \hline
 $e_1$ &       $e_1$ & $e_2$ & $e_3$ & $e_4$ & $0$   & $0$   & $0$   & $0$ \\ 
 $e_2$ &       $0$   & $0$   & $e_7$ & $-e_6$& $e_1$ & $0$   & $0$   & $e_2$ \\ 
 $e_3$ &       $0$   & $-e_7$& $0$   & $e_5$ & $0$   & $e_1$ & $0$   & $e_3$ \\ 
 $e_4$ &       $0$   & $e_6$ & $-e_5$& $0$   & $0$   & $0$   & $e_1$ & $e_4$ \\ 
 $e_5$ &       $e_5$ & $e_8$ & $0$   & $0$   & $0$   & $-e_4$& $e_3$ &   $0$ \\ 
 $e_6$ &       $e_6$ & $0$   & $e_8$ & $0$ & $e_4$ & $0$   & $-e_2$&   $0$ \\ 
 $e_7$ &       $e_7$ & $0$   & $0$   & $e_8$ & $-e_3$ & $e_2$ & $0$ &   $0$ \\ 
 $e_8$ &         $0$ &   $0$ & $0$   & $0$   & $e_5$ & $e_6$ & $e_7$ & $e_8$ \\ 
\end{tabular}
\end{center}%
Thus, for $1\leqslant i,j\leqslant 8$ we have
\begin{eq}\label{eq_OOO}
M_{ijj}=\left\{
\begin{array}{rl}
1,& i=1 \text{ and } 1\leqslant j\leqslant 4 \\
1,& i=8 \text{ and } 5\leqslant j\leqslant 8 \\
0,& \text{otherwise} \\
\end{array}
\right.
\end{eq}

\noindent{}Lemma~\ref{lemma3} together with formulas~(\ref{eq_trL_trR}) and (\ref{eq_OOO}) imply that
\begin{eq}\label{eq_Tr_O}
\tr\nolimits_{\rm L}(X_r)=\tr(\chi_r \chi_0) = \sum_{i,j=1}^n x_{ri} M_{ijj} = 4 x_{r1} + 4x_{r8} = 4 \tr(X_r).
\end{eq}%
\noindent{}Consider $1\leqslant r_1,\ldots,r_k\leqslant m$. It follows from formula~(\ref{eq_Tr_O}) that 
\begin{eq}\label{eq_trO}
\tr\nolimits_{\rm L}((\cdots((X_{r_1}X_{r_2})X_{r_3})\cdots )X_{r_k}) = 4 \tr((\cdots((X_{r_1}X_{r_2})X_{r_3})\cdots )X_{r_k}).
\end{eq}%

\noindent{}Since Lemma~\ref{lemma3} implies 
$$\tr(w(\chi_1,\ldots,\chi_m)\chi_0) = \tr\nolimits_{\rm L}((\cdots((X_{r_1}X_{r_2})X_{r_3})\cdots )X_{r_k})$$
for $w = (\cdots((\chi_{r_1}\chi_{r_2})\chi_{r_3})\cdots )\chi_{r_{m}}$ from $\FF\LA \chi_1,\ldots,\chi_m\RA$, 
it follows from formula~(\ref{eq_trO}) that $\Tr(\algA)_m$ contains all elements $\tr(X_{r_1}\cdots X_{r_k})$ for $1\leqslant r_1,\ldots,r_k\leqslant m$.

Since the algebra $\FF[\OO^m]^{\G}$ is known to be generated by $\tr((\cdots((X_{r_1}X_{r_2})X_{r_3})\cdots )X_{r_k})$ for $1\leqslant r_1,\ldots,r_k\leqslant m$ (see~\cite{schwarz1988} in case of $\FF=\CC$ and \cite{zubkov2018} in case of $\Char{\FF}\neq2$), the required claim is proven.
\end{proof}

%========================================================
%========================================================
\section{Methods for calculation of generators for invariants}\label{section_gen}

In this section we assume that $G\leqslant  \GL_n$ and the characteristic of $\FF$ is zero. Given an $\NN$-graded algebra $\algB$, denote by $\beta(\algB)$ the least integer $\beta$ such that the algebra $\algB$ is generated by its $\NN$-homogeneous elements of degree $\leqslant\beta$. 

%------------------------------------------------------ 
\subsection{Reduction to multilinear case}

A vector $\un{r}=(r_1,\ldots,r_m)\in\NN^m$ satisfying $r_1+\cdots +r_m=t$ is called a {\it partition} of $t\in \NN$ in $m$ parts. Denote by $\calcP^{t}_m$ the set of all such partitions. Given a partition $\un{r} \in \calcP^{t}_m$, we define the function $\un{r}|\cdot|:\{1,\ldots,t\}\to \{1,\ldots,m\}$ by
$$\un{r}|l|=j \text{ if and only if }r_1+\cdots+r_{j-1}+ 1\leqslant l \leqslant r_1+\cdots+r_{j},$$
for all $1\leqslant l\leqslant t$. For a partition $\un{r}\in \calcP^{t}_m $ we also define a homomorpism $\pi_{\un{r}}: \FF[\algA^t] \to \FF[\algA^m]$ of $\FF$-algebras by $x_{l,i}\to x_{\un{r}|l|,i}$ for all  $1\leqslant l\leqslant t$ and $1\leqslant i\leqslant n$.

The following proposition and lemma are well known and can easily be proven.

%------------------------------------------------------  
\begin{prop}\label{prop_char0}
The algebra of invariants $\FF[\algA^m]^G$ is generated by 
$$\{\pi_{\un{r}}(f)\,|\, f\in \FF[\algA^{t}]^G \text{ is multilinear},\; \un{r}\in \calcP^{t}_m,\; t\geqslant m\}.$$
\end{prop}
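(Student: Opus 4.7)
The plan is to invoke the standard polarization/restitution argument, which is valid because $\Char\FF=0$. Since $\FF$ is infinite and $G\leqslant\GL_n$ acts linearly, the algebra $\FF[\algA^m]^G$ is $\NN^m$-graded by multidegrees, as noted in Section~\ref{section_notations}. Hence it suffices to show that every $\NN^m$-homogeneous invariant $f\in\FF[\algA^m]^G$ of multidegree $\un{d}=(d_1,\ldots,d_m)$ with $t:=|\un{d}|$ is a scalar multiple of $\pi_{\un{d}}(F)$ for some multilinear $F\in\FF[\algA^t]^G$; note that $\un{d}\in\calcP^t_m$.

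To construct $F$, I would introduce formal scalars $\tau_1,\ldots,\tau_t$ and perform the substitution
$$x_{j,i}\;\longmapsto\;\sum_{l\,:\,\un{d}|l|=j}\tau_l\,x_{l,i}$$
inside $f$, where on the right-hand side $x_{l,i}$ denotes the coordinate function of $\FF[\algA^t]$. The result is an element of $\FF[\algA^t][\tau_1,\ldots,\tau_t]$, and $F$ is defined as the coefficient of $\tau_1\tau_2\cdots\tau_t$ in this expansion. A straightforward block-by-block count shows that $F$ is multilinear as a member of $\FF[\algA^t]$, i.e., $\NN^t$-homogeneous of multidegree $1^t$.

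Next I would verify that $F\in\FF[\algA^t]^G$. Because $G\leqslant\GL_n$ acts diagonally and identically on every copy of $\algA$, the substitution above commutes with the $G$-action: on each block indexed by $j$, the expression $\sum_l\tau_l x_{l,i}$ is transformed exactly as the original $x_{j,i}$ would be. Since $f$ is $G$-invariant, the whole substituted polynomial is fixed by $G$, and hence so is the coefficient of every monomial in the $\tau_l$, in particular the coefficient $\tau_1\cdots\tau_t$.

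Finally, applying $\pi_{\un{d}}$ to $F$ identifies every variable $x_{l,i}$ from the $j$-th block back to $x_{j,i}$. Comparing with the multinomial expansion of the original substitution after the collapse $x_{l,i}=x_{j,i}$ for all $l$ with $\un{d}|l|=j$, one obtains $\pi_{\un{d}}(F)=d_1!\cdots d_m!\cdot f$. Since $\Char\FF=0$, this constant is invertible, exhibiting $f$ as a scalar multiple of $\pi_{\un{d}}(F)$. The main obstacle is keeping the index bookkeeping for polarization/restitution straight and confirming the combinatorial constant; the minor case $t<m$ can be absorbed by enlarging $t$ via adjoining auxiliary multilinear invariant factors before restitution.
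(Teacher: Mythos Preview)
Your argument is the standard polarization/restitution proof, and this is exactly what the paper has in mind: Proposition~\ref{prop_char0} is stated there without proof as ``well known and can easily be proven''. The main case $t=|\un{d}|\geqslant m$ is handled correctly: your polarized $F\in\FF[\algA^t]^G$ has multidegree $1^t$, the diagonal $G$-action commutes with the substitution, and restitution via $\pi_{\un{d}}$ returns $d_1!\cdots d_m!\cdot f$, invertible in characteristic zero.

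The only real wobble is the edge case $t<m$. Your proposed fix, ``adjoining auxiliary multilinear invariant factors before restitution'', reads as multiplying $F$ by further invariants to raise the degree; but then restitution would return a multiple of $f$ times those extra factors, not $f$ itself, so that does not recover $f$ as an element of the claimed generating set. The clean fix is different and simpler: by the paper's definition in Section~\ref{section_notations}, \emph{multilinear} means multidegree in $\{0,1\}^m$, so zero entries are allowed. Hence your polarized $F\in\FF[\algA^t]^G$ may be viewed, via the inclusion $\FF[\algA^t]\hookrightarrow\FF[\algA^m]$, as a multilinear invariant of $\FF[\algA^m]$ of multidegree $(1,\ldots,1,0,\ldots,0)$. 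Now pick $\un{r}\in\calcP^m_m$ so that $\un{r}|l|=\un{d}|l|$ for $1\leqslant l\leqslant t$; for instance, if $j_0$ is the largest index with $d_{j_0}>0$, set $r_j=d_j$ for $j\neq j_0$ and $r_{j_0}=d_{j_0}+(m-t)$. Since $F$ does not involve the variables $x_{l,i}$ with $l>t$, one still obtains $\pi_{\un{r}}(F)=d_1!\cdots d_m!\,f$, now with $t'=m\geqslant m$ as required by the statement. With this adjustment your proof is complete.
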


%------------------------------------------------------  
\begin{lemma}\label{lemma_diag}
If the group $G$ is diagonal, i.e., all elements of $G\leqslant  \GL_n$ are diagonal matrices, then the algebra of invariants $\FF[\algA^m]^G$ is generated by some monomials from $\FF[\algA^m]$.
\end{lemma}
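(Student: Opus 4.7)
The plan is to exploit the fact that when $G$ consists of diagonal matrices, the action on $\FF[\algA^m]$ is by characters on each coordinate variable, so every monomial is an eigenvector. I would first unpack how a diagonal element acts. If $g\in G$ corresponds to the diagonal matrix $[g]=\text{diag}(\mu_1,\ldots,\mu_n)$, then equation~\Ref{eq_action} tells us $g\bullet X_r = [g]^{-1}X_r$, whose $i^{\rm th}$ coordinate equals $\mu_i^{-1}x_{ri}$. Unwinding the definition of the $G$-action on $\FF[\algA^m]$ via $(g\cdot f)(\un{a})=f(g^{-1}\cdot\un{a})$, I would conclude that $g\cdot x_{ri} = \mu_i^{-1} x_{ri}$, so each variable $x_{ri}$ is a common eigenvector with character $\chi_{ri}(g)=\mu_i^{-1}$ depending only on $i$.

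The next step is to observe that since $G$ acts on the generators by characters, it acts by characters on every monomial: for any monomial $w=\prod_{s} x_{r_s,i_s}$, one has $g\cdot w = \chi_w(g)\,w$, where $\chi_w=\prod_s\chi_{r_s i_s}$. Thus $\FF[\algA^m]$ decomposes as the direct sum, over characters $\chi$ of $G$, of the subspaces spanned by monomials having that character. The subspace corresponding to the trivial character $\chi\equiv 1$ is exactly $\FF[\algA^m]^G$.

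Now I would conclude the lemma. Any invariant $f\in\FF[\algA^m]^G$ decomposes uniquely as a sum of monomials; grouping those monomials by their character and using the direct sum decomposition, the invariance $g\cdot f=f$ forces each character-component to be invariant on its own, hence the only surviving component is the one over the trivial character. Consequently $f$ is an $\FF$-linear combination of \emph{invariant monomials}, i.e.\ monomials $w$ satisfying $\chi_w\equiv 1$. Therefore the set of invariant monomials spans $\FF[\algA^m]^G$ as a vector space, and in particular generates it as an $\FF$-algebra.

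There is no real obstacle here; the argument is routine once one recognizes that a diagonal action automatically makes every monomial a weight vector. The only point that deserves care is the standard step of passing from \emph{$f$ is a sum of monomials of various characters} to \emph{each character-isotypic part of $f$ is individually $G$-invariant}, which uses that $\FF$ has enough elements (it is infinite, being algebraically closed) so that distinct characters are linearly independent as functions on $G$ restricted to the monomials appearing in $f$.
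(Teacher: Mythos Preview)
Your argument is correct and is the standard one. The paper itself does not give a proof of this lemma: it merely states that Proposition~\ref{prop_char0} and Lemma~\ref{lemma_diag} ``are well known and can easily be proven,'' so there is no proof to compare against.

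One small remark: your final paragraph slightly overcomplicates the last step. You do not need linear independence of characters on $G$; linear independence of distinct \emph{monomials} in the polynomial ring is enough. Writing $f=\sum_w c_w\,w$ over monomials $w$, the equation $g\cdot f=\sum_w c_w\,\chi_w(g)\,w=f$ immediately forces $c_w(\chi_w(g)-1)=0$ for every $w$ and every $g$, so any monomial with nonzero coefficient must already be $G$-invariant. No hypothesis on the size of $\FF$ is needed for this step.
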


%=======================================================
\subsection{Polarization}  We consider the classical notion of polarization of an invariant as it was given  in~\cite{Draisma_Kemper_Wehlau_08}.

\begin{defin}\label{def_polar}  
Let $l,m\geqslant 1$ and $a_{r,s}$ be commutative indeterminates for all $1\leqslant r\leqslant l$, $1\leqslant s\leqslant m$. Define a homomorphism $\Phi=\Phi_{l,m}:\FF[\algA^l]\to \FF[\algA^m][a_{1,1},\ldots,a_{l,m}]$ of $\FF$-algebras by
$$\begin{array}{rcll}
\Phi(x_{r,i})&=&\sum\limits_{s=1}^m a_{r,s} x_{s,i} & (1\leqslant r\leqslant l,\; 1\leqslant i\leqslant n).\\
\end{array}
$$
\noindent{}Then for an $f\in \FF[\algA^l]$ there exists the set  $P=\Pol_{l}^m(f)\subset \FF[\algA^m]$ of non-zero elements such that $$\Phi(f) = \sum\limits_{h\in P} \un{a}^{\De(h)} h,$$
where  $\{\un{a}^{\De(h)}\}$ are pairwise different monomials in $\{a_{r,s}\}$. 
In other words, $\Pol_{l}^m(f)$ is the set of all non-zero coefficients of $\Phi(f)$, considered as a polynomial in indeterminates $\{a_{r,s}\}$.  For a set $S \subset \FF[\algA^l]$ we define $\Pol_{l}^m(S)$ to be the union of all $\Pol_{l}^m(f)$ with $f \in S$.
\end{defin}
\medskip

The next remark was proven in Section~1 of~\cite{Draisma_Kemper_Wehlau_08}.

%------------------------------------------------
\begin{remark}\label{remark_polar}
If $f\in \FF[\algA^l]^G$, then $\Pol_{l}^m(f)\subset \FF[\algA^m]^G$.
\end{remark}
\medskip

The following result was proven by Weyl~\cite{weyl1939classical}.
%------------------------------------------------------  
\begin{theo}[Weyl’s polarization theorem]\label{theo_Weyl}
Assume that the algebra of invariants $\FF[\algA^n]^G$ is generated by a set $S$. Then for every $m>n$ the algebra of invariants $\FF[\algA^m]^G$ is generated by  $\Pol_n^m(S)\subset \FF[\algA^m]^G$.
\end{theo}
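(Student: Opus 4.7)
The plan is to use the classical polarization--restitution duality. Since $\FF$ is infinite, both $\FF[\algA^n]^G$ and $\FF[\algA^m]^G$ decompose as direct sums of their invariant multihomogeneous components, so I would first reduce to showing that every multihomogeneous $f\in\FF[\algA^m]^G$ lies in the subalgebra generated by $\Pol_n^m(S)$. A key observation is that the polarization map $\Phi_{n,m}$ is an $\FF$-algebra homomorphism, since it is defined by substitution on the polynomial generators of $\FF[\algA^n]$. Consequently, if $g\in\FF[\algA^n]^G$ can be written as a polynomial $p$ in the elements of $S$, then $\Phi_{n,m}(g)=p(\Phi_{n,m}(S))$; extracting coefficients in the indeterminates $a_{r,s}$ shows that every element of $\Pol_n^m(g)$ is an $\FF$-polynomial expression in elements of $\Pol_n^m(S)$. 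Hence it suffices to prove the stronger statement: $\FF[\algA^m]^G$ is generated by $\bigcup_{g\in\FF[\algA^n]^G}\Pol_n^m(g)$.

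For the main step, given a multihomogeneous $f\in\FF[\algA^m]^G$ of multidegree $\un{d}=(d_1,\ldots,d_m)$, I would introduce the restitution map $\sigma:\FF[\algA^m]\to\FF[\algA^n][t_{r,s}]$ defined by $x_{s,i}\mapsto\sum_{r=1}^n t_{r,s}x_{r,i}$, where the $t_{r,s}$ are new indeterminates. Since $\sigma$ commutes with the $G$-action on the $x$-variables (the $t_{r,s}$ being fixed by $G$), $\sigma(f)$ lies in $\FF[\algA^n]^G[t_{r,s}]$, so its expansion $\sigma(f)=\sum_{J} t^{J}\, g_{J}$ has all coefficients $g_{J}$ in $\FF[\algA^n]^G$. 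The heart of the argument is then a classical combinatorial identity that recovers $f$ as an $\FF$-linear combination of polarizations $\Pol_n^m(g_J)$, with scalar factor a product of multinomial coefficients involving $d_1!,\ldots,d_m!$; these factors are invertible precisely because $\Char\FF=0$.

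The main obstacle is verifying the reconstruction identity in the second step, i.e., showing that the specific $\FF$-linear combination of polarizations of the $g_J$'s reproduces $f$. This identity is essentially the Aronhold principle and can be checked by composing the operators $\sigma$ and $\Phi_{n,m}$ (suitably extended to the larger coefficient rings) and comparing multidegrees in the auxiliary variables $t_{r,s}$ and $a_{r,s}$. This is the classical heart of Weyl's original argument, and it is precisely the step where the assumption $\Char\FF=0$ enters essentially, through the invertibility of the factorial normalization constants.
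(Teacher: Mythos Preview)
The paper does not supply its own proof of this theorem; it simply records the statement and attributes it to Weyl~\cite{weyl1939classical}. So there is no argument in the paper to compare against.

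Your outline is a reasonable sketch of the classical polarization--restitution proof. The reduction in the first paragraph is correct: $\Phi_{n,m}$ is indeed an $\FF$-algebra homomorphism, and coefficient extraction in the $a_{r,s}$ shows that each element of $\Pol_n^m(g)$ lies in the subalgebra generated by $\Pol_n^m(S)$ whenever $g$ lies in the subalgebra generated by $S$. (A small wording quibble: the statement you reduce to is logically \emph{weaker}, not stronger, than the theorem; but combined with the observation of the first paragraph it does yield the theorem, so the logic is sound.)

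The second step is where the real content lies, and you rightly flag it. The composition $\Phi_{n,m}\circ\sigma$ amounts to the substitution $X_s\mapsto\sum_{s'}c_{s's}X_{s'}$ with $c_{s's}=\sum_{r=1}^n a_{r,s'}t_{r,s}$; since the $m\times m$ matrix $(c_{s's})$ has rank at most $n<m$, one cannot recover $f$ by a single coefficient extraction in the $a$'s and $t$'s, and some genuine work is needed here (an induction on the number of distinct arguments, the Aronhold/Capelli--Deruyts expansion, or the $\GL_m$-module argument via the Cauchy decomposition of $\FF[\algA^m]$). Your sketch correctly isolates this as the crux and defers to the classical argument, which is entirely appropriate for a result the paper itself treats as a black box.
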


%------------------------------------------------------  
\begin{remark}\label{rem_ex}
Assume that  $\algV$ is a finite dimensional vector space with $\dim \algV=n$, and $G\leqslant \GL(\algV)$. Note that in general, there is no upper bound on $\beta(\FF[\algV]^G)$, which only depends on $n$. As an example, assume $\FF=\CC$, $n=2$, $\FF[\algV]=\FF[x,y]$ and for any integer $q>1$ consider $\xi\in \CC$ with $\xi^q=1$ and $\xi^i\neq1$ for every $1\leqslant i<q$. Then for the group $G$ consisting of matrices 
$$\matr{\xi^i}{0}{0}{1},\quad 1\leqslant i\leqslant q,$$
we have that $\FF[\algV]^G=\FF[x^q,y]$ and $\beta(\FF[V]^G)=q$.
\end{remark}

%------------------------------------------------------ 
\subsection{Invariant of finite groups}\label{section_finite_group}

In this section we assume that the group $G\leqslant\GL_n$ is finite. We will also use the following classical result by Emmy Noether~\cite{Noether_1916}:
%------------------------------------------------------  
\begin{theo}[Noether]\label{theo_Noether}
The algebra of invariants $\FF[\algA^m]^G$  is generated by $\NN^m$-homogeneous invariants of degree $\leqslant|G|$. In other words, $\beta(\FF[\algA^m]^G)\leqslant |G|$.
\end{theo}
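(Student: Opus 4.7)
The plan is to use the classical Reynolds-operator argument, which is available here because $G$ is finite and $\Char\FF = 0$, so $|G|$ is invertible in $\FF$. Define the \emph{Reynolds operator} $R: \FF[\algA^m]\to \FF[\algA^m]^G$ by $R(f) = \frac{1}{|G|}\sum_{g\in G} g\cdot f$. Since each $g\in G\leqslant\GL_n$ acts linearly on the $x_{ri}$, it preserves the $\NN^m$-multidegree, and hence so does $R$. Moreover $R$ is the identity on invariants, so $R$ is a surjection onto $\FF[\algA^m]^G$ sending monomials to $\NN^m$-homogeneous invariants of the same multidegree. Thus $\FF[\algA^m]^G$ is $\FF$-spanned by the elements $R(w)$ with $w$ a monomial, and it suffices to show that $R(w)$ for $\deg w > |G|$ lies in the subalgebra generated by invariants of total degree $\leqslant |G|$.

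The key trick is to package all the invariants $R(w)$ of a given total degree $k$ simultaneously as coefficients of a single auxiliary polynomial. Introduce commuting indeterminates $u_{ri}$ (one for each variable $x_{ri}$) and consider, for $k\geqslant 1$, the power sum
$$s_k(\un{u}) \;=\; \sum_{g\in G}\Bigl(\,\sum_{r,i} u_{ri}\,(g\cdot x_{ri})\Bigr)^{\!k} \;\in\; \FF[\algA^m][\un{u}].$$
Expanding and collecting coefficients, the coefficient of a monomial $\un{u}^{\un\alpha}$ of total degree $k$ is a nonzero rational multiple of $|G|\cdot R(\un{x}^{\un\alpha})$. In this way, every invariant $R(\un{x}^{\un\alpha})$ of total degree $k$ appears as a coefficient of $s_k(\un{u})$.

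Now observe that $s_k(\un{u})$ is precisely the $k$-th Newton power sum of the $|G|$ linear forms $L_g(\un{u}) = \sum_{r,i} u_{ri}\,(g\cdot x_{ri})$ indexed by $g\in G$. By Newton's identities, for any $k>|G|$ the power sum $s_k$ can be written as a polynomial, with coefficients in $\QQ$, in the earlier power sums $s_1,\ldots,s_{|G|}$ of the same $|G|$ quantities. Since $\Char\FF=0$, this identity is valid in $\FF[\algA^m][\un{u}]$. Comparing the coefficients of each monomial $\un{u}^{\un\alpha}$ with $|\un\alpha|>|G|$ on both sides then expresses $R(\un{x}^{\un\alpha})$ as a polynomial in the coefficients of $s_1(\un{u}),\ldots,s_{|G|}(\un u)$, and each such coefficient is a scalar multiple of an invariant $R(\un{x}^{\un\beta})$ with $|\un\beta|\leqslant|G|$. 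A downward induction on total degree (or, equivalently, starting from the minimal-degree generators and working upward) finishes the argument.

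The main obstacle is purely bookkeeping: one has to verify carefully that the coefficient-extraction from the Newton relation on $s_k(\un u)$ does in fact yield an expression of $R(\un x^{\un\alpha})$ as a polynomial in the invariants $\{R(\un x^{\un\beta}) : |\un\beta|\leqslant |G|\}$, rather than in something larger. This works because a product $s_{j_1}(\un u)\cdots s_{j_\ell}(\un u)$ with $j_1+\cdots+j_\ell = |\un\alpha|$ has $\un u^{\un\alpha}$-coefficient equal to a rational combination of products $R(\un x^{\un\beta_1})\cdots R(\un x^{\un\beta_\ell})$ with $|\un\beta_t|=j_t\leqslant|G|$. Beyond that, the proof uses only the invertibility of $|G|$ in $\FF$ and the standard Newton identities.
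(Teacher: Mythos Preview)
The paper does not give its own proof of this theorem; it simply states the result and cites Noether~\cite{Noether_1916}. So there is nothing to compare against, and the question reduces to whether your argument is sound.

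Your argument is the classical Noether proof and is correct. A couple of minor clarifications: the coefficient of $\un{u}^{\un\alpha}$ in $s_k(\un u)$ is exactly $\binom{k}{\un\alpha}\sum_{g\in G} g\cdot \un{x}^{\un\alpha} = \binom{k}{\un\alpha}\,|G|\,R(\un x^{\un\alpha})$, where $\binom{k}{\un\alpha}=k!/\un\alpha!$ is the multinomial coefficient, which is invertible in $\FF$ since $\Char\FF=0$. For the Newton step, the cleanest formulation is that the elementary symmetric functions $e_1,\ldots,e_{|G|}$ of the $|G|$ forms $L_g$ are $\QQ$-polynomials in $s_1,\ldots,s_{|G|}$, and for $k>|G|$ the recurrence $s_k=e_1 s_{k-1}-e_2 s_{k-2}+\cdots+(-1)^{|G|-1}e_{|G|}s_{k-|G|}$ then gives $s_k$ as a $\QQ$-polynomial in $s_1,\ldots,s_{|G|}$ by upward induction on $k$; your phrase ``downward induction'' is a slip, but the intended argument is clear and valid.
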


Given a subgroup $H<G$ of a finite group $G$, consider
the {\it transfer map}
$$\Upsilon: \FF[\algA^m]^H \to \FF[\algA^m]^G, \quad \text{ } f\to \sum_{g\in G} g\cdot f.$$%

\noindent{}Obviously, $\Upsilon$ is a linear map. Moreover, $\Upsilon$ is surjective, since for $h\in \FF[\algA^m]^G$ we have $\Upsilon(\frac{1}{|G|}h) = h$ and $\frac{1}{|G|}h$ lies in $\FF[\algA^m]^H$. Therefore, the following remark holds.

\begin{remark}\label{remark_transfer} We use the above notations. Assume that $\De\in\NN^m$ is a multidegree and $S\subset \FF[\algA^m]^H$ is some subset of invariants of mutidegree $\De$ such that every invariant from $\FF[\algA^m]^H$ of multidegree $\De$ belongs to the $\FF$-span of $S$. Then every invariant from $\FF[\algA^m]^G$ of multidegree $\De$ belongs to the $\FF$-span of $\Upsilon(S)$.
\end{remark}

%========================================================
%S4======================================================
\section{Two-dimensional simple algebras}\label{section_simple2dim}

In this section we present the description of all two-dimensional simple algebras over an arbitrary algebraically closed field $\FF$. 

The classification of all two-dimensional algebras modulo the action of the group of automorphisms was considered in~\cite{Ananin_Mironov_2000, Petersson_2000, Goze_Remm_2011, Kaygorodov_Volkov_2019}.  See the introduction of~\cite{Kaygorodov_Volkov_2019} for the comparison of these results. Below in this section we present the results from Theorem 3.3 of~\cite{Kaygorodov_Volkov_2019} for the classification of %{\color{blue}nontrivial} 
algebras and Corollaries 3.8 and 4.2 of~\cite{Kaygorodov_Volkov_2019} for the description of its automorphisms. Note that the same results are also formulated in Table 1 of~\cite{Calderon_Ouridi_Kaygorodov_2022}. We apply the following notations.
\begin{enumerate}
\item[$\bullet$] $J=\matr{0}{1}{1}{0}$. 

\item[$\bullet$] Consider the action of the cyclic group 
$C_2 = \{1,\rho\}$ on $\FF$ defined
by the equality $\al^\rho=-\al$ for all $\al\in\FF$. We denote by $\FF_{\geqslant 0}$ some set of representatives of orbits under this action. As an example, if $\FF = \CC$, then we can take
$\CC_{\geqslant 0} = \{\al\in\CC \,|\, \re(\al) > 0\} \cup \{\al\in\CC \,|\, \re(\al) = 0, \im(\al) \geqslant 0\}$.

\item[$\bullet$] Similarly, consider the action of 
$C_2$ on $\FF^{\times}\backslash\{1\}$ defined
by the equality $\al^\rho={\al^{-1}}$ for all $\al\in\FF^{\times}\backslash\{1\}$. We denote by $\FF^{\times}_{>1}$ some set of representatives of orbits under this action.  As an example, if $\FF = \CC$, then we can take
$\CC^{\times}_{>1}= \{\al\in\CC^{\times} \,|\, |\al| > 1\} \cup \{\al\in\CC^{\times} \,|\, |\al| = 1,\; 0<{\rm arg}(\al)\leqslant \pi\}$. Note that $-1\in \FF^{\times}_{>1}$.

%\Art{The example of $\CC^{\times}_{>1}$ is not clear to me}

%\Ale{Now is correct}

\item[$\bullet$] Consider the action of 
$C_2$ on $\FF^2$ defined
by the equality $(\al,\be)^\rho=(1-\al+\be,\be)$ for all $(\al,\be)\in\FF^2$. We denote by $\calU$ some set of representatives of orbits under this action.

\item[$\bullet$] Denote $\calT=\{(\al,\be)\in\FF^2\,|\,\al+\be=1\}$.

\item[$\bullet$] The definition of $\calV\subset \FF^4$ is given in Section 3 of~\cite{Kaygorodov_Volkov_2019}. Note that $(-1,-1,-1,-1)\in\calV$ if and only if $\Char(\FF)\neq3$.
\end{enumerate}

At first, we write the notation for the two-dimensional algebra $\algA$. Then we write a tableau of multiplication $M=(M_{ij})_{1\leqslant i,j\leqslant 2}$, i.e., $e_ie_j=M_{ij}$ in $\algA$. Finally, we write down each non-trivial elements $g=(g_{ij})_{1\leqslant i,j\leqslant 2}$ of $\Aut(\algA)\leqslant  \GL_2$. If $g$ is not written, then $\Aut(\algA)=\{{\rm Id}\}$. Note that in case $\algA=\bfE_1(-1,-1,-1,-1)$ the group $\Aut(\algA)\simeq \Sym_3$ is not explicitly described as a subgroup of $\GL_2$ in \cite{Kaygorodov_Volkov_2019, Calderon_Ouridi_Kaygorodov_2022}. Hence, we obtain the required description in Lemma~\ref{lemma_AutE1} (see below).

%------------------------------------------------------ 
\begin{theo}[Kaygorodov, Volkov~\cite{Kaygorodov_Volkov_2019}] Every two-dimensonal algebra  is isomorphic to one and only one algebra from Table 1.
\end{theo}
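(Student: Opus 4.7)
The plan is to classify two-dimensional algebras up to isomorphism by analyzing the action of $\GL_2$ on the eight-dimensional parameter space of multiplication tensors in $\algW = \algV^{\ast}\otimes \algV^{\ast} \otimes \algV$ for $\algV=\FF^2$. Two tables yield isomorphic algebras precisely when the corresponding tensors lie in the same $\GL_2$-orbit, so the goal is to exhibit a complete and irredundant set of orbit representatives and to simultaneously compute the stabilizer, i.e.\ the automorphism group, at each one.

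First I would stratify $\algW$ by discrete $\GL_2$-invariants: the existence of a non-zero idempotent ($a^2=a$) or a square-zero element ($a^2=0$); the ranks and Jordan types of the left and right multiplication operators $L_a,R_a$ evaluated at distinguished vectors; and the dimension of the image of the multiplication map $\algA\otimes\algA\to\algA$. Within each stratum I would use $\GL_2$ to normalize one chosen distinguished element (an idempotent, a square-zero vector, or a canonical pair) and then solve the remaining bilinear system for the other structure constants; this produces the explicit families that populate Table 1. The residual freedom in each stratum is the stabilizer in $\GL_2$ of the chosen normalization, and quotienting by its action on the leftover parameters produces the parameter sets in the table: the involutions $\al\mapsto -\al$, $\al\mapsto \al^{-1}$ and $(\al,\be)\mapsto (1-\al+\be,\be)$ cut out $\FF_{\geqslant 0}$, $\FF^{\times}_{>1}$ and $\calU$ respectively, $\calT$ arises from a rigid normalization with a single linear relation, and the generic four-parameter family lives on the subvariety $\calV\subset \FF^4$. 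The same calculation delivers $\Aut(\algA)$ for each normal form as the full stabilizer of the normalized tensor; the realization $\Aut(\bfE_1(-1,-1,-1,-1))\simeq\Sym_3$ as an explicit matrix subgroup, not given in the cited sources, is postponed to Lemma~\ref{lemma_AutE1}.

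The main obstacle is establishing non-isomorphism between different rows of the table: one must rule out the existence of any $g\in\GL_2$ carrying a representative of one family to any parameter value of another. This is handled by exhibiting, for each pair of strata, a discrete $\GL_2$-invariant that separates them, for instance $\dim\Aut(\algA)$, the nilpotency class, or the characteristic polynomial of $L_a-R_a$ evaluated at a canonical element, and then verifying inside each stratum that the parameter identifications coincide exactly with the orbits of the residual action computed above. The most laborious bookkeeping occurs for the four-parameter family indexed by $\calV$, whose defining conditions cut out the admissible parameter tuples non-trivially and whose residual involutions must be checked against the full discriminant of the stabilizer computation; the characteristic-dependent appearance of $(-1,-1,-1,-1)\in\calV$ iff $\Char(\FF)\ne 3$ is a typical symptom of this subtlety.
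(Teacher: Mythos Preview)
The paper does not prove this theorem at all: it is quoted verbatim from Kaygorodov and Volkov~\cite{Kaygorodov_Volkov_2019} (specifically their Theorem~3.3, with the automorphism groups taken from their Corollaries~3.8 and~4.2), and the only supplementary work the present paper does is Lemma~\ref{lemma_AutE1}, which pins down the explicit matrix realization of $\Aut(\bfE_1(-1,-1,-1,-1))\simeq\Sym_3$. So there is no ``paper's own proof'' to compare your proposal against.

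As for your proposal itself: what you have written is a \emph{strategy outline}, not a proof. You describe a plausible stratification scheme (idempotents, square-zero elements, ranks and Jordan types of $L_a,R_a$, image of the multiplication map) and correctly identify the residual $\GL_2$-actions that produce the parameter sets $\FF_{\geqslant 0}$, $\FF^{\times}_{>1}$, $\calU$, $\calT$, $\calV$. But none of the actual work is done: you do not exhibit the strata, do not carry out a single normalization, do not produce any of the fifteen families in Table~1, and do not verify a single separation invariant between two rows. The phrases ``I would stratify'', ``I would use $\GL_2$ to normalize'', and ``this is handled by exhibiting\ldots a discrete $\GL_2$-invariant'' are promises, not arguments. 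For a classification theorem of this kind the entire content lies in the case analysis you have deferred; a referee would regard this as a statement of intent rather than a proof. If you want to supply an independent proof, you must actually execute the program---or, more realistically, do what the paper does and cite~\cite{Kaygorodov_Volkov_2019}.
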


%\newpage
\bigskip
\begin{center}
Table 1
\end{center}
$${\tiny\begin{array}{lll}

\hline 

&&\\

\bfA_1(\al)  & M=\matr{e_1+e_2}{\al e_2}{(1-\al)e_2}{0} & g = \matr{1}{0}{a}{1}\\

\bfA_2 & M=\matr{e_2}{e_2}{-e_2}{0} & g=\matr{1}{0}{a}{1}\\

\bfA_3 &  M=\matr{e_2}{0}{0}{0} & g=\matr{b}{0}{a}{b^2} \\

\bigfrac{\bfA_4(\al),}{\al\in\FF_{\geqslant 0}} &  M=\matr{\al e_1 + e_2}{e_1 + \al e_2}{-e_1}{0} &  g=\matr{-1}{0}{0}{1}, \text{ if }\al=0 \\

\bfB_1(\al) & M=\matr{0}{(1-\al)e_1+e_2}{\al e_1-e_2}{0} & \\

\bfB_2(\al) & M=\matr{0}{(1-\al)e_1}{\al e_1}{0} & g=\matr{b}{0}{0}{1}\\

\bfB_3 & M=\matr{0}{e_2}{-e_2}{0} & g=\matr{1}{0}{a}{b}\\

\bigfrac{\bfC(\al,\be),}{\be\in \FF_{\geqslant 0}} & M=\matr{e_2}{(1-\al)e_1 + \be e_2}{\al e_1 - \be e_2}{e_2} & g=\matr{-1}{0}{0}{1}, \text{ if }\be=0\\

\bfD_1(\al,\be), (\al,\be)\in \calU & M=\matr{e_1}{(1-\al)e_1 + \be e_2}{\al e_1 - \be e_2}{0} & g=\matr{1}{1}{0}{-1}, \text{ if }\be=2\al - 1\\

\bfD_2(\al,\be), (\al,\be)\not\in \calT & M=\matr{e_1}{\al e_2}{\be e_2}{0} & g=\matr{1}{0}{0}{b}\\

\bfD_3(\al,\be),  (\al,\be)\not \in \calT  & M=\matr{e_1}{e_1 + \al e_2}{-e_1 + \be e_2}{0} & \\

\bigfrac{\bfE_1(\al,\be,\ga,\de),}{(\al,\be,\ga,\de)\in\calV} & M=\matr{e_1}{\al e_1 + \be e_2}{\ga e_1 + \de e_2}{e_2} &
\bigfrac{g=J, \text{ if }(\al,\ga)=(\de,\be)\neq (-1,-1)}{g\in \Sym_3, \text{if }(\al,\ga)=(\de,\be)= (-1,-1)} \\

\bigfrac{\bfE_2(\al,\be,\ga),}{(\be,\ga)\not\in \calT} & M=\matr{e_1}{(1-\al)e_1 + \be e_2}{\al e_1 + \ga e_2}{e_2} &\\

\bigfrac{\bfE_3(\al,\be,\ga),}{\ga\in\FF^{\times}_{>1}}  & M=\matr{e_1}{(1-\al)\ga e_1 + \frac{\be}{\ga} e_2}{\al\ga e_1 + \frac{1-\be}{\ga}e_2}{e_2} & g=J, \text{ if }\ga=-1 \text{ and } \al=\be\\

\bfE_4 & M=\matr{e_1}{e_1+e_2}{0}{e_2} & \\

\bfE_5(\al) & M=\matr{e_1}{(1-\al)e_1 + \al e_2}{\al e_1 + (1-\al)e_2}{e_2} & g=\matr{a}{c}{1-a}{1-c}\\

\bfN & M=\matr{0}{0}{0}{0} & g\in \GL_2\\

&&\\

\hline
\end{array}}
$$

Here we assume that $\al,\be,\ga,\de\in\FF$ and $a,c\in\FF$, $b\in\FF^{\times}$, $a\neq c$.
\bigskip

%{\color{blue}
%\begin{remark}
%Note that the algebras ${\bfD_2}(0,0)$ and ${\bfE_1}(0,0,0,0)$ are decomposable.
%\end{remark}
%}

In the next lemma we explicitly define the action of the group of automorphisms of ${\bfE_1}(-1,-1,-1,-1)$. 

%------------------------------------------------------  
\begin{lemma}\label{lemma_AutE1}
In case $\Char{\FF}\neq3$, the group of automorphisms for $\mathcal{A}={\bfE_1}(-1,-1,-1,-1)$ is given by
$$\Aut(\mathcal{A})=\left\{\smatr{-1}{0}{-1}{1},\smatr{-1}{1}{-1}{0},\smatr{0}{-1}{1}{-1},\smatr{1}{-1}{0}{-1},\smatr{0}{1}{1}{0},\smatr{1}{0}{0}{1}\right\},$$
which is isomorphic to the symmetric group $\Sym_3$.
\end{lemma}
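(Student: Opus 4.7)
\begin{proof_of}{plan for Lemma~\ref{lemma_AutE1}.}
The approach I would take is to combine a structural observation (counting idempotents) with a direct verification. First I observe that $\mathcal{A}=\bfE_1(-1,-1,-1,-1)$ is commutative, since the multiplication table gives $e_1e_2=e_2e_1=-e_1-e_2$. This commutativity removes one of the four homomorphism constraints that an automorphism must satisfy.

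Next I compute all nonzero idempotents of $\mathcal{A}$. For $x=\alpha e_1+\beta e_2$, a direct expansion using the multiplication table yields
\begin{equation*}
x^2=(\alpha^2-2\alpha\beta)e_1+(\beta^2-2\alpha\beta)e_2,
\end{equation*}
so $x^2=x$ forces $\alpha^2-2\alpha\beta=\alpha$ and $\beta^2-2\alpha\beta=\beta$. Subtracting yields $(\alpha-\beta)(\alpha+\beta-1)=0$; the case $\alpha=\beta$ gives $\alpha=-1$ (the zero idempotent being excluded), while $\alpha+\beta=1$ leads to $3\alpha^2=3\alpha$, which under the hypothesis $\Char{\FF}\neq3$ forces $\alpha\in\{0,1\}$. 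Hence the set of nonzero idempotents is exactly
\begin{equation*}
\mathcal{I}=\{\iota_1,\iota_2,\iota_3\}:=\{e_1,\,e_2,\,-e_1-e_2\},
\end{equation*}
and notably $\iota_1+\iota_2+\iota_3=0$.

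Since any automorphism permutes $\mathcal{I}$, the action gives a homomorphism $\Phi:\Aut(\mathcal{A})\to\Sym(\mathcal{I})\simeq\Sym_3$. This homomorphism is injective: any two elements of $\mathcal{I}$ form a basis of $\mathcal{A}$ (because each pair is linearly independent), so an automorphism that fixes two idempotents is the identity. Thus $|\Aut(\mathcal{A})|\leqslant 6$. To prove surjectivity, it suffices to exhibit two automorphisms whose images generate $\Sym_3$. The swap $e_1\leftrightarrow e_2$ preserves the multiplication (by the symmetry of the table, since the structure constants are invariant under exchanging the two basis vectors), corresponding to $J=\smatr{0}{1}{1}{0}$; a second transposition comes from the linear map $e_1\mapsto -e_1-e_2$, $e_2\mapsto e_2$, corresponding to $\smatr{-1}{0}{-1}{1}$, and its validity as an automorphism is checked by evaluating it on the three products $e_1^2,e_2^2,e_1e_2$ using $\iota_1\mapsto\iota_3$, $\iota_2\mapsto\iota_2$, $\iota_3\mapsto\iota_1$. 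Two transpositions generate $\Sym_3$, so $\Phi$ is surjective and therefore an isomorphism.

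Finally, I read off the six matrices by forming all products of the two generators above, obtaining precisely the six matrices displayed in the statement. The main (minor) obstacle is only bookkeeping: verifying the multiplication-preservation for each generator, and ensuring that the $\Char{\FF}\neq3$ hypothesis is invoked exactly where the third idempotent is forced to be distinct from $e_1$ and $e_2$ (without it, the idempotent equation $3\alpha^2=3\alpha$ gives a one-parameter family of idempotents and the automorphism group becomes infinite, which is consistent with the absence of $(-1,-1,-1,-1)$ from $\calV$ in characteristic $3$).
\end{proof_of}
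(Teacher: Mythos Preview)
Your proof is correct and takes a genuinely different route from the paper's. The paper proceeds by brute force: it writes a general $g=\smatr{a_1}{a_2}{a_3}{a_4}\in\GL_2$, imposes the four conditions $g(e_ie_j)=g(e_i)g(e_j)$, reduces this to a system of six polynomial equations in $a_1,\ldots,a_4$, and then solves by case analysis on which entries vanish, arriving at exactly the six matrices. Your approach is more structural: by computing the three nonzero idempotents $\{e_1,e_2,-e_1-e_2\}$ you obtain a natural homomorphism $\Aut(\mathcal{A})\to\Sym_3$, prove injectivity from the fact that any two idempotents form a basis, and prove surjectivity by exhibiting two transpositions. This buys you a conceptual explanation of why the group is $\Sym_3$ (it permutes three distinguished elements), reduces the verification to checking only two generators rather than five non-identity matrices, and makes the role of $\Char\FF\neq3$ transparent (it is exactly what forces the idempotent locus to be finite). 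The paper's method, by contrast, is insight-free but completely mechanical and would apply uniformly to any multiplication table. One small remark: your parenthetical claim that in characteristic $3$ the automorphism group becomes infinite is plausible but not actually proved by your idempotent count alone; however, this aside is not needed for the lemma.
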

\begin{proof}
Consider an invertible linear map $g:\algA\to\algA$, given by a matrix
$$g=\matr{a_1}{a_2}{a_3}{a_4}$$
with respect to the basis $\{e_1,e_2\}$ of $\algA$, which was used for the tableau of multiplication of $\algA$ in Table 1, where $a_1,\ldots,a_4\in\FF$. Then $g$ is an automorphism of $\algA$ if and only if for each $1\leqslant i,j\leqslant 2$ we have $g(e_ie_j)-g(e_i)g(e_j) = 0$. These conditions are equivalent to the following system of equalitites:
\begin{eq}\label{eq_sys}
\begin{array}{rcl}
a_1 (1+2 a_3 - a_1) & = & a_3 (1+2 a_1-a_3)  \; = \; 0, \\
a_2(1-a_3)+a_1 (1+a_2-a_4) & = &  a_4 (1-a_1) + a_3 (1 - a_2 + a_4) \; = \;0, \\
a_2 (1+2 a_4-a_2) & = & a_4 (1+2 a_2-a_4) \; = \; 0.  \\
\end{array}
\end{eq}

Assume $a_1=0$.  Then system~(\ref{eq_sys}) implies that  $a_1 = 0$, $a_3 = 1$, and $a_2 = 1 + 2 a_4$. 
\begin{enumerate}
\item[$\bullet$] in case $a_4 = 0$ we obtain the automorphism $g=J$;

\item[$\bullet$] in case $a_4 \neq 0$ system~(\ref{eq_sys}) implies that $a_4 = -1$ 
 and we obtain the automorphism  $g=\matr{0}{-1}{1}{-1}$. 
\end{enumerate}

Assume  $a_1\neq 0$.  Then system~(\ref{eq_sys}) implies that $a_1 = 1 + 2 a_3$.

\begin{enumerate}
\item[$\bullet$] If $a_4=0$, then  system~(\ref{eq_sys}) implies that $a_2 = 1$, $a_3 = -1$ and we obtain the automorphism  $g=\matr{-1}{1}{-1}{0}$.

\item[$\bullet$] Let $a_4\neq 0$. Then system~(\ref{eq_sys}) implies that  $a_4 = 1 + 2 a_2$.

If $a_2=a_3=0$, then we obtain the automorphism  $g={\rm Id}$.

If $a_2=0$ and $a_3\neq0$, then system~(\ref{eq_sys}) implies that $a_3 = -1$ and we obtain the automorphism $g=\matr{-1}{0}{-1}{1}$.

If $a_2\neq 0$ and $a_3=0$, then system~(\ref{eq_sys}) implies that  $a_2 = -1$ and we obtain the automorphism  $g=\matr{1}{-1}{0}{-1}$. 

In case $a_2\neq 0$ and $a_3\neq 0$, system~(\ref{eq_sys}) implies a contradiction.
\end{enumerate}
\end{proof}

%------------------------------------------------------ 
\begin{theo}\label{theo_simple}
A two-dimensional algebra is simple if and only if it is isomorphic to one of the following algebras: 
\begin{enumerate}
\item[(1)] $\bf A_4(\alpha)$, $\alpha\in\FF_{\geqslant 0}$,
\item[(2)] $\bf B_1(\alpha)$, 
\item[(3)] $\bf C(\alpha,\beta)$, $\beta\in\FF_{\geqslant 0}$,
\item[(4)] $\bf D_1(\alpha,\beta)$, $(\alpha,\beta)\in\mathcal{U}$ with $\beta\neq 0$,
\item[(5)] $\bf D_3(\alpha,\beta)$, $(\alpha,\beta)\not\in\mathcal{T}$ with $(\alpha,\beta)\neq(0,0)$,
\item[(6)] $\bf E_1(\alpha,\beta,\gamma,\delta)$, $(\alpha,\beta,\gamma,\delta)\in\mathcal{V}$ with $(\alpha,\gamma)\neq(0,0)$, $(\beta,\delta)\neq(0,0)$ and $(\beta,\delta)\neq(1-\alpha,1-\gamma)$,
\item[(7)] $\bf E_2(\alpha,\beta,\gamma)$, $(\beta,\gamma)\not\in\mathcal{T}$ with $(\beta,\gamma)\neq(0,0)$,
\item[(8)] $\bf E_3(\alpha,\beta,\gamma)$, $\gamma\in\FF_{>1}^\times$,
\item[(9)] $\bf E_4$,
\end{enumerate}
where $\al,\be,\ga,\de\in\FF$.
%{\color{blue}Moreover, non-simple indecomposable algebras have exactly one proper ideal.}
\end{theo}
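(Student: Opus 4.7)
The strategy is to apply the definition directly: a two-dimensional algebra $\algA$ is simple if and only if $\algA^2\neq 0$ and $\algA$ has no proper non-trivial two-sided ideal. Since $\dim\algA = 2$, every such proper ideal is one-dimensional, i.e. of the form $\FF v$ for some non-zero $v\in\algA$. Hence the task reduces to deciding, for each entry of Table~1, whether there exists $v=\la e_1+\mu e_2\neq 0$ such that $e_iv\in\FF v$ and $ve_i\in\FF v$ for $i=1,2$. This yields a system of polynomial equations in $(\la,\mu)$ whose coefficients depend on the parameters of the given family, and whose solvability can be analyzed case by case from the multiplication tables.

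First I would eliminate the algebras that are manifestly not simple. The algebra $\bfN$ has trivial multiplication. Direct inspection of the multiplication tables in Table~1 shows that $\FF e_2$ is an ideal of $\bfA_1(\al)$, $\bfA_2$, $\bfA_3$, $\bfB_3$, $\bfD_2(\al,\be)$, and $\FF e_1$ is an ideal of $\bfB_2(\al)$. A slightly less obvious computation handles $\bfE_5(\al)$: for $v=e_1-e_2$ one checks that $e_iv=\al v$ and $ve_i=(1-\al)v$ for $i=1,2$, so $\FF(e_1-e_2)$ is always an ideal. These families are therefore excluded from the simple list.

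For each of the remaining families $\bfA_4(\al)$, $\bfB_1(\al)$, $\bfC(\al,\be)$, $\bfD_1(\al,\be)$, $\bfD_3(\al,\be)$, $\bfE_1(\al,\be,\ga,\de)$, $\bfE_2(\al,\be,\ga)$, $\bfE_3(\al,\be,\ga)$, $\bfE_4$, I would parameterize a candidate one-dimensional ideal as $\FF(e_1+te_2)$ with $t\in\FF$, together with the separate sub-case $\FF e_2$. The four conditions $e_i(e_1+te_2)\in\FF(e_1+te_2)$ and $(e_1+te_2)e_i\in\FF(e_1+te_2)$ become polynomial equations in $t$ and the structure constants; solving this system in each family identifies exactly those parameter values for which a non-trivial one-dimensional ideal exists. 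The excluded values listed in the theorem should match these solutions precisely: for instance, in $\bfD_1(\al,\be)$ the condition $\be=0$ produces the ideal $\FF e_1$; in $\bfD_3(\al,\be)$ the degenerate point $(\al,\be)=(0,0)$ produces $\FF e_1$; and in $\bfE_1(\al,\be,\ga,\de)$ the three exclusions $(\al,\ga)\neq(0,0)$, $(\be,\de)\neq(0,0)$, $(\be,\de)\neq(1-\al,1-\ga)$ correspond respectively to $\FF e_1$, $\FF e_2$, and $\FF(e_1+e_2)$ becoming ideals. In every case where no ideal exists one must also verify $\algA^2\neq 0$, which is transparent from the presence of $e_1^2=e_1$ or $e_2^2=e_2$ in the remaining families.

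The main obstacle will be the families $\bfE_1$, $\bfE_2$, $\bfE_3$, which carry three or four parameters constrained by the sets $\calV$ and $\calT$. Here the polynomial system in $t$ has enough freedom that one must separate sub-cases according to the vanishing of leading coefficients, and one must be careful that the normalization inherited from Table~1 (i.e., the choice of representatives modulo the action of $\GL_2$) does not accidentally cause a case of one family to be isomorphic to a simple algebra in another row. The other families are short one- or two-parameter checks in which the ideal structure is essentially forced by the idempotent relations present in their multiplication tables.
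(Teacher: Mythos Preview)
Your approach is essentially identical to the paper's: both proofs reduce simplicity to the nonexistence of a nonzero $v=ae_1+be_2$ with $e_iv,\,ve_i\in\FF v$, and then run through Table~1 case by case, exhibiting an explicit one-dimensional ideal for each excluded parameter value and deriving a contradiction otherwise. The paper phrases the ideal condition as linear dependence of pairs such as $\{x,e_ix\}$ or $\{xe_i,e_ix\}$, but this is exactly your condition $e_iv\in\FF v$.

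Two small slips to correct before writing it up. In $\bfE_1(\al,\be,\ga,\de)$ you have the first two exclusions swapped: $(\al,\ga)=(0,0)$ makes $\FF e_2$ (not $\FF e_1$) an ideal, and $(\be,\de)=(0,0)$ makes $\FF e_1$ an ideal. The third exclusion $(\be,\de)=(1-\al,1-\ga)$ yields the ideal $\FF(e_1-e_2)$, not $\FF(e_1+e_2)$; in fact $\FF(e_1+e_2)$ is never an ideal of $\bfE_1$ in characteristic zero, since the conditions $1+\al=\be$ and $\al=1+\be$ are incompatible. Finally, your worry about a non-simple algebra in one row being isomorphic to a simple algebra in another row is unnecessary: the Kaygorodov--Volkov theorem guarantees that the rows of Table~1 are pairwise non-isomorphic, so simplicity can be decided row by row without any cross-checks.
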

\begin{proof}
Let $\algA$ be an algebra from Table 1 and $\algA\neq \bfN$. Assume that $\algA$ is not simple, i.e., there exists a one-dimensional ideal $I$ in $\algA$ generated by a non-zero element $x= ae_1+be_2$ for some $a,b\in\FF$.
\begin{enumerate}

\item[1.] Let $\algA=\bf A_4(\alpha)$, where $\alpha\in\FF_{\geqslant 0}$. The set $\{x,e_2x\}$ is linearly dependent if and only if $ab=0$. We also have that the set $\{x,e_1x\}$ is linearly dependent set if and only if $a^2-b^2=0$. Thus, $a=b=0$; a contradiction. Therefore, $\bf A_4(\alpha)$ is simple.

\item[2.] Let $\algA=\bf B_1(\alpha)$, where $\al\in\FF$. The set $\{xe_1,e_1x\}$ is linearly dependent if and only if $b=0$. Also, $\{xe_2,e_2x\}$ is a linearly dependent set if and only if $a=0$. Therefore, we have a contradiction, i.e., $\bf B_1(\alpha)$ is simple.

\item[3.] Let $\algA=\bf C(\alpha,\beta)$, where $\al\in\FF$, $\beta\in\FF_{\geqslant 0}$. The set $\{xe_1,e_2x\}$ is linearly dependent if and only if $\alpha(b^2-a^2)=0$. Also, $\{xe_2,e_1x\}$ is a linearly dependent set if and only if $(1-\alpha)(b^2-a^2)=0$. Thus, $a^2=b^2$. Since $\{x,xe_2\}$ is a linearly dependent set if and only if $a(\alpha b + \beta a)=0$ and $\{x,e_2x\}$ is a linearly dependent set if and only if $a((\alpha b + \beta a)-b)=0$, we obtain $a=b=0$; a contradiction. Therefore, $\bf C(\alpha,\beta)$ is simple.

\item[4.] Let $\algA=\bf D_1(\alpha,\beta)$, where $(\al,\be)\in \calU$. The set $\{xe_2,e_2x\}$ is linearly dependent if and only if $\beta a^2=0$. Also, $\{xe_1,e_1x\}$ is a linearly dependent set if and only if $\beta b(2a+b)=0$. If $\beta\neq 0$, then we obtain $a=b=0$; a contradiction. Therefore, $\bf \bfD_1(\alpha,\beta)$ is simple for $\beta\neq 0$.

\item[5.] Let $\algA=\bf D_3(\alpha,\beta)$, where $(\al,\be)\not \in \calT$. Assume $(\alpha,\beta)\neq(0,0)$. The set $\{xe_2,e_2x\}$ is linearly dependent if and only if $(\alpha+\beta) a^2=0$. 

If $a=0$, then $\{x,xe_1\}$ is a linearly dependent set if and only if $b=0$. 

Assume $a\neq0$. Then $\alpha+\beta=0$, and $\{xe_1,xe_2\}$ is a linearly dependent set if and only if $\alpha a^2=0$. Thus, $\al=\be=0$; a contradiction. Therefore, $\bf D_3(\alpha,\beta)$ is simple for $(\alpha,\beta)\neq(0,0)$.

\item[6.] Let $\algA=\bf E_1(\alpha,\beta,\gamma,\delta)$, where $(\al,\be,\ga,\de)\in\calV$ and $(\alpha,\gamma)\neq(0,0)$, $(\beta,\delta)\neq(0,0)$, $(\beta,\delta)\neq(1-\alpha,1-\gamma)$. 

In case $a=0$, the sets $\{x,x e_1\}$ and $\{x,e_1 x\}$ are linearly dependent if and only if $b=0$. 

In case $b=0$, the sets $\{x,x e_2\}$ and $\{x,e_2 x\}$ are linearly dependent if and only if $a=0$. 

Assume $ab\neq0$. Since $\{x,x e_2\}$ and  $\{x,e_1 x\}$ are linearly dependent sets, then $a+b=0$. The set $\{x,e_1x\}$ is linearly dependent if and only if $\alpha+\beta=1$ and the set $\{x,e_2x\}$ is linearly dependent if and only if $\gamma+\delta=1$; a contradiction. Therefore, in all cases we have a contradiction, i.e., $\bf E_1(\alpha,\beta,\gamma,\delta)$ is simple.

\item[7.] Let $\algA=\bf E_2(\alpha,\beta,\gamma)$, where $\al\in\FF$, $(\be,\ga)\not\in \calT$ and $(\beta,\gamma)\neq(0,0)$. The set $\{x,xe_2\}$ is linearly dependent if and only if $a(\alpha b + \beta a)=0$. 

If $a=0$,  then the sets $\{x,x e_1\}$ and $\{x,e_1 x\}$ are linearly dependent if and only if $b=0$. 

Assume $a\neq0$. Then $\alpha b + \beta a =0$. Hence, the set $\{x,x e_1\}$ is linearly dependent if and only if $ab(1-\be-\ga)=0$, which implies $b=0$ and $\beta=0$. Hence, $\{x,e_2 x\}$ is a linearly dependent set if and only if $\gamma a^2 = 0$; a contradiction. Therefore,  in all cases we have a contradiction, i.e., $\bf E_2(\alpha,\beta,\gamma)$ is simple.

\item[8.] Let $\algA=\bf E_3(\alpha,\beta,\gamma)$, where $\al,\be\in\FF$, $\ga\in\FF^{\times}_{>1}$. Recall that $\gamma\not\in\{0,1\}$. The set $\{x,x^2\}$ is linearly dependent if and only if $(1-\gamma)ab\left( \frac{a}{\gamma}+ b\right)=0$. 

If $a=0$, then the sets $\{x,x e_1\}$ and $\{x,e_1 x\}$ are linearly dependent if and only if $b=0$. 

If $b=0$, the set $\{x,e_2 x\}$ is linearly dependent if and only if $a=0$. 

Assume $ab\neq0$. Then $b=-\frac{a}{\gamma}$. The set $\{x,e_2x\}$ is a linearly dependent set if and only if $\beta=\alpha\gamma$. Hence, the set $\{x,x e_1\}$ is linearly dependent if and only if $\gamma=1$; a contradiction. Therefore,  in all cases we have a contradiction, i.e., $\bf E_3(\alpha,\beta,\gamma)$ is simple.

\item[9.] Let $\algA=\bf E_4$. The set $\{x,e_1x\}$ is linearly dependent if and only if $b=0$. We also have that the set $\{x,xe_2\}$ is a linearly dependent if and only if $a=0$. Therefore, we have a contradiction, i.e., $\bf E_4$ is simple.
\end{enumerate}

For the remaining cases, we consider some one-dimensional ideal $I$ of $\algA$, generated by $x\in\algA$:
\begin{enumerate}
\item[$\bullet$] for $\algA=\bfA_1(\al)$ we take $x= e_2$;

\item[$\bullet$] for $\algA=\bfA_2$, we take $x=e_2$;

\item[$\bullet$] for $\algA=\bfA_3$, we take $x=e_2$;

\item[$\bullet$] for $\algA=\bfB_2(\al)$, we take $x=e_1$;

\item[$\bullet$] for $\algA=\bfB_3$, we take $x=e_2$;

\item[$\bullet$] for $\algA=\bfD_1(\al,0)$ with $(\al,0)\in \calU$, we take  $x=e_1$;

\item[$\bullet$] for $\algA=\bfD_2(\al,\be)$ with $(\al,\be)\not\in \calT$, we take  $x=e_2$;

\item[$\bullet$] for $\algA=\bfD_3(0,0)$, we take  $x=e_1$;

\item[$\bullet$] for $\algA=\bfE_1(\al,\be,\ga,\de)$ with $(\al,\be,\ga,\de)\in\calV$, we take  
\begin{enumerate}
\item[(a)] $x=e_2$ in case $(\alpha,\gamma)=(0,0)$,

\item[(b)] $x=e_1$ in case $(\beta,\delta)=(0,0)$,

\item[(c)] $x= e_1-e_2$ in case $\beta=1-\alpha$, $\delta=1-\gamma$;
\end{enumerate}

\item[$\bullet$] for $\algA=\bfE_2(\al,0,0)$, we take $x=e_1$;

\item[$\bullet$] for $\algA=\bfE_5(\al)$, we take $x=e_1-e_2$.
\end{enumerate}
\end{proof}

Theorem~\ref{theo_simple} implies the following results.

%-------------------------------------------------------
\begin{cor}\label{cor_simple} The automorphism group of a two-dimensional simple algebra is finite.
\end{cor}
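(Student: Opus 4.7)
The plan is a direct case-by-case verification based on the classification provided by Theorem~\ref{theo_simple}, using the automorphism data already recorded in Table~1. Every two-dimensional simple algebra is isomorphic to exactly one algebra in the list $(1)$--$(9)$ of Theorem~\ref{theo_simple}, so it suffices to inspect $\Aut(\algA)$ for each of these nine families.

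First I would go through the families whose entries in Table~1 carry no listed non-trivial automorphism. These are $\bfB_1(\al)$, $\bfD_3(\al,\be)$ with $(\al,\be)\not\in\calT$, $\bfE_2(\al,\be,\ga)$ with $(\be,\ga)\not\in\calT$, and $\bfE_4$. In all of them $\Aut(\algA)=\{\mathrm{Id}\}$, which is plainly finite. Next I would handle the families where Table~1 records a single non-trivial automorphism, conditional on a Zariski-closed constraint on the parameters: $\bfA_4(\al)$ (with the involution when $\al=0$), $\bfC(\al,\be)$ (when $\be=0$), $\bfD_1(\al,\be)$ (when $\be=2\al-1$, in the simple range $\be\neq 0$), and $\bfE_3(\al,\be,\ga)$ (when $\ga=-1$ and $\al=\be$). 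In each of these, one reads directly from Table~1 that the explicit matrix $g$ listed is an involution, so $\Aut(\algA)$ has order at most~$2$.

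The only slightly delicate case is $\algA=\bfE_1(\al,\be,\ga,\de)$ with $(\al,\be,\ga,\de)\in\calV$ satisfying the simplicity conditions of Theorem~\ref{theo_simple}. Here Table~1 says $\Aut(\algA)$ is generated by $J$ when $(\al,\ga)=(\de,\be)\neq(-1,-1)$ (order at most $2$), and is isomorphic to $\Sym_3$ when $(\al,\ga)=(\de,\be)=(-1,-1)$. For the latter exceptional case, the six explicit matrices listed in Lemma~\ref{lemma_AutE1} constitute the full automorphism group, which has order~$6$. Combining these observations, every two-dimensional simple algebra satisfies $|\Aut(\algA)|\leqslant 6$, and in particular $\Aut(\algA)$ is finite.

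The step that I would expect to need the most care is confirming that Table~1 and Lemma~\ref{lemma_AutE1} genuinely cover $\Aut(\algA)$ (and not merely a subgroup) in the $\bfE_1(-1,-1,-1,-1)$ case, since this is the one family where a larger-than-$C_2$ symmetry appears. But this is exactly what Lemma~\ref{lemma_AutE1} establishes by solving system~\eqref{eq_sys} exhaustively, so no further work is required beyond quoting it.
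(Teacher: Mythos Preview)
Your proposal is correct and follows exactly the paper's approach: the paper simply states that Theorem~\ref{theo_simple} implies the corollary, and the implicit argument is precisely the case-by-case check against Table~1 that you have written out in detail. Your explicit bound $|\Aut(\algA)|\leqslant 6$ is a nice by-product that the paper does not state.
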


%-------------------------------------------------------
\begin{cor}\label{cor_simple2}
A two-dimensional simple algebra $\algA$ has a non-trivial automorphisms group if and only if $\algA$ is isomorphic to one of the following algebras:
\begin{enumerate}
\item[$\bullet$] $\bfA_4(0)$, 

\item[$\bullet$] $\bfC(\al,0)$ for $\al\in\FF$, 

\item[$\bullet$] $\bfD_1(\al,2\al-1)$ for $(\al,2\al-1)\in \calU$ with $\al\neq \frac{1}{2}$,

\item[$\bullet$]  $\bfE_1(\al,\be,\be,\al)$ for $(\al,\be,\be,\al)\in\calV$, $(\al,\be)\neq (-1,-1)$, $(\al,\be)\neq (0,0)$, $\al+\be\neq1$, 

\item[$\bullet$] $\bfE_1(-1,-1,-1,-1)$ when $\Char{\FF}\neq3$,

\item[$\bullet$]  $\bfE_3(\al,\al,-1)$ for $\al\in\FF$.
\end{enumerate}
\end{cor}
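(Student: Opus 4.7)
The proof will be a direct case-by-case verification across the nine families of two-dimensional simple algebras listed in Theorem~\ref{theo_simple}. For each family, I would read off the automorphism group from Table~1 (supplemented by Lemma~\ref{lemma_AutE1} for the explicit description in the exceptional case $\bfE_1(-1,-1,-1,-1)$) and then identify when this group strictly contains the identity, cross-checking against the simplicity constraints already recorded in Theorem~\ref{theo_simple}.

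The routine cases follow immediately from inspection of the ``$g$'' column of Table~1. In the families $\bfB_1(\al)$, $\bfD_3(\al,\be)$, $\bfE_2(\al,\be,\ga)$ and $\bfE_4$, no non-identity automorphism is recorded, so these contribute nothing. For $\bfA_4(\al)$ the automorphism $g=\smatr{-1}{0}{0}{1}$ exists exactly when $\al=0$; for $\bfC(\al,\be)$ with $\be\in\FF_{\geqslant 0}$ the analogous condition is $\be=0$; for $\bfD_1(\al,\be)$ the listed automorphism requires $\be=2\al-1$, and combined with the simplicity constraint $\be\neq 0$ this gives precisely $\al\neq\tfrac{1}{2}$; for $\bfE_3(\al,\be,\ga)$ with $\ga\in\FF^{\times}_{>1}$ the condition from Table~1 is $\ga=-1$ and $\al=\be$, yielding $\bfE_3(\al,\al,-1)$.

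The slightly more delicate step is the family $\bfE_1(\al,\be,\ga,\de)$, where both the simplicity conditions and the automorphism conditions are stated in terms of all four parameters. By Table~1, a non-trivial automorphism exists exactly when $(\al,\ga)=(\de,\be)$, which forces $\ga=\be$ and $\de=\al$, so the algebra has the form $\bfE_1(\al,\be,\be,\al)$. I would then translate the three simplicity conditions from Theorem~\ref{theo_simple}: the first two, $(\al,\ga)\neq(0,0)$ and $(\be,\de)\neq(0,0)$, coincide and reduce to $(\al,\be)\neq(0,0)$; the third, $(\be,\de)\neq(1-\al,1-\ga)$, becomes $(\be,\al)\neq(1-\al,1-\be)$, which collapses to the single scalar condition $\al+\be\neq 1$. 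Finally I would split off the sub-case $(\al,\be)=(-1,-1)$, where Lemma~\ref{lemma_AutE1} gives $\Aut(\algA)\simeq \Sym_3$; this falls inside $\calV$ only when $\Char{\FF}\neq 3$, so this sub-case must be stated with that restriction, matching the bullet $\bfE_1(-1,-1,-1,-1)$ in the statement.

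I do not expect any serious obstacle; the only thing to be careful about is the bookkeeping in the $\bfE_1$ case, namely verifying that the three simplicity conditions and the symmetry $(\al,\ga)=(\de,\be)$ are mutually consistent and reduce exactly to the two inequalities $(\al,\be)\neq(0,0)$ and $\al+\be\neq 1$, and that the $\Char{\FF}\neq 3$ hypothesis on $\bfE_1(-1,-1,-1,-1)$ (inherited from the definition of $\calV$) is explicitly flagged. Once these reductions are written out, the statement of Corollary~\ref{cor_simple2} follows by collecting the surviving sub-families.
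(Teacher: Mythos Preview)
Your proposal is correct and follows exactly the approach implicit in the paper: the corollary is stated there without a separate proof, merely as an immediate consequence of Theorem~\ref{theo_simple} together with the automorphism column of Table~1 (and Lemma~\ref{lemma_AutE1}), which is precisely the case-by-case check you outline. Your bookkeeping in the $\bfE_1$ case is accurate.
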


%========================================================
%S7======================================================
\section{Invariants}\label{section_invariants} 

In this section we describe the algebra of polynomial invariants $I_m(\algA)$ for any two-dimensional algebra $\algA$ with non-zero multiplication. We assume that the characteristic of $\FF$ is zero. For short, we denote $x_r:=x_{r1}$ and $y_r:=y_{r2}$ for all $1\leqslant r\leqslant m$. Given $\un{r}=(r_1,\ldots,r_k) \in\{1,\ldots,m\}^k$, we write $x_{\un{r}}=x_{r_1}\cdots x_{r_k}$ and $y_{\un{r}}=y_{r_1}\cdots y_{r_k}$.  Denote 
$\Omega_m$ the set of all pairs $(\un{r},\un{s})$ such that  
$\un{r}=(r_1,\ldots,r_k)$, $\un{s}=(s_1,\ldots,s_l)$ with $k,l\geqslant 0$,  $k+l=m$, $r_1< \cdots <r_k$, $s_1<\cdots<s_l$ and $\{r_1, \ldots,r_k,s_1,\ldots,s_l\} = \{1,\ldots,m\}$. In case $k=0$ ($l=0$, respectively), we denote $\un{r}=\emptyset$ ($\un{s}=\emptyset$, respectively). 

%-------------------------------------------------------
\subsection{Partial cases}\label{section_partial}
Note that by formula~(\ref{eq_action}) an automorphism $g=(g_{ij})_{1\leqslant i,j\leqslant 2}$ of $\Aut(\algA)\leqslant \GL_2$ acts on $\FF[\algA^m]$ as follows:
\begin{eq}\label{eq_action2}
g^{-1} x_{r}=g_{11} x_r + g_{12} y_r \text{ and }g^{-1} y_r = g_{21} x_r + g_{22} y_r 
\end{eq}%
for all $1\leqslant r\leqslant m$.

%-------------------------------------------------------
\begin{prop}\label{prop_aut1}
Assume that $K \subset \FF$ is an infinite subset.
\begin{enumerate}[(a)]
\item[1.] If there exist $a,b, c\in\FF$ such that $\matr{a}{0}{b}{c}$ lies in $\Aut(\algA)$ and $a^t c^{m-t}\neq1$ for all $0\leqslant t\leqslant m-1$, then $I_m(\algA)\subset\FF[x_1,\ldots,x_m]$.   

\item[2.] If there exists $a\in\FF$ such that $\matr{a}{0}{0}{c}$ lies in $\Aut(\algA)$ for all $c\in K$, then $I_m(\algA)\subset\FF[x_1,\ldots,x_m]$.  

\item[3.] If there exists $c\in\FF$ such that $\matr{a}{0}{0}{c}$ lies in $\Aut(\algA)$ for all $a\in K$, then $I_m(\algA)\subset\FF[y_1,\ldots,y_m]$.  
\end{enumerate}
\end{prop}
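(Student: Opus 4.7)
Fix an $\NN^m$-homogeneous invariant $f\in I_m(\algA)$ of multidegree $\un{d}=(d_1,\ldots,d_m)$ and decompose it by its individual $y_r$-degrees: write $f=\sum_{\un{k}}f_{\un{k}}$, where $\un{k}=(k_1,\ldots,k_m)$ ranges over $\NN^m$ with $0\leqslant k_r\leqslant d_r$, and $f_{\un{k}}$ collects the monomials of $f$ in which $y_r$ appears with multiplicity exactly $k_r$ for each $r$. The strategy in all three parts is to translate the action of the given $g\in\Aut(\algA)$ to $\FF[\algA^m]$ via~\eqref{eq_action2} and, by studying its effect on each $f_{\un{k}}$, to force $f_{\un{k}}=0$ whenever $\un{k}\neq\un{0}$.

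For Part~2, with $g=\matr{a}{0}{0}{c}$, the action is diagonal: $g^{-1}\cdot x_r=a x_r$ and $g^{-1}\cdot y_r=c y_r$, so $g^{-1}\cdot f_{\un{k}}=a^{|\un{d}|-|\un{k}|}c^{|\un{k}|}f_{\un{k}}$. Invariance, holding for every $c\in K$ with $a$ fixed, gives the polynomial identity $a^{|\un{d}|-|\un{k}|}c^{|\un{k}|}=1$ on the infinite set $K$; viewed as a polynomial in $c$, this forces $|\un{k}|=0$, whence $f\in\FF[x_1,\ldots,x_m]$. Part~3 is entirely symmetric, with the roles of $x$ and $y$ (and of $a$ and $c$) swapped.

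For Part~1, the lower-triangular action $g^{-1}\cdot x_r=a x_r$, $g^{-1}\cdot y_r=b x_r+c y_r$ preserves the filtration by coordinatewise $y$-multidegree. Expanding $(bx_r+cy_r)^{k_r}$ directly yields
\begin{equation*}
g^{-1}\cdot f_{\un{k}}=a^{|\un{d}|-|\un{k}|}c^{|\un{k}|}f_{\un{k}} + \text{(terms of strictly smaller coordinatewise $y$-multidegree)}.
\end{equation*}
Choose $\un{k}^{\rm top}$ maximal in the coordinatewise order with $f_{\un{k}^{\rm top}}\neq 0$; the $y$-multidegree-$\un{k}^{\rm top}$ component of $g^{-1}\cdot f=f$ receives no contribution from higher indices and so gives $a^{|\un{d}|-|\un{k}^{\rm top}|}c^{|\un{k}^{\rm top}|}=1$. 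At the multilinear multidegree $\un{d}=1^m$, this contradicts the hypothesis unless $\un{k}^{\rm top}=\un{0}$, so every multilinear invariant of $\FF[\algA^m]^G$ is $y$-free. Combining with Proposition~\ref{prop_char0} and noting that the polarization maps $\pi_{\un{r}}$ act separately on $x$- and $y$-variables (sending $x_l$ to $x_{\un{r}|l|}$ and $y_l$ to $y_{\un{r}|l|}$) promotes this $y$-freeness from the multilinear generators to all of $I_m(\algA)$.

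The main obstacle is Part~1, where only the top component of the $y$-filtration is directly pinned down by an eigenvalue equation, so one must descend through the filtration and additionally pass from multilinear invariants to arbitrary ones via polarization; the arithmetic hypothesis is calibrated precisely for the multilinear multidegree $m$, which is where the pinch point lies. Parts~2 and~3, by contrast, are essentially routine thanks to the polynomial-identity argument applied to an infinite parameter family.
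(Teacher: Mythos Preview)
Your approach matches the paper's closely. For Parts~2 and~3 you run the diagonal eigenvalue argument directly at an arbitrary $\NN^m$-homogeneous multidegree, which is slightly cleaner than the paper's reduction to the multilinear case but otherwise identical in spirit. For Part~1 both you and the paper reduce to multilinear invariants via Proposition~\ref{prop_char0} and then exploit the lower-triangular filtration; the paper phrases the top-term step as an increasing induction on $\#\un{r}$, you as the choice of a coordinatewise-maximal $\un{k}^{\rm top}$, and these are equivalent.

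There is, however, a genuine gap in the Part~1 reduction that your write-up shares with the paper. Proposition~\ref{prop_char0} generates $I_m(\algA)$ from $\pi_{\un{r}}(f)$ with $f$ multilinear in $\FF[\algA^t]^G$ for \emph{all} $t\geq m$, not just $t=m$; your eigenvalue argument establishes $y$-freeness only at the single level $t=m$, since the hypothesis $a^s c^{m-s}\neq 1$ for $0\leq s\leq m-1$ is calibrated to that particular $m$ and does not carry over to larger $t$. Concretely, take $m=1$ and $g=\smatr{1}{0}{0}{-1}$: the hypothesis reads $c=-1\neq 1$, yet $y_1^{2}$ is $g$-invariant, so if $\Aut(\algA)=\{I,g\}$ (as for the basis-swapped copy of $\bfC(\alpha,0)$) then $I_1(\algA)=\FF[x_1,y_1^{2}]\not\subset\FF[x_1]$, and Part~1 fails as stated. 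In the paper's applications (to $\bfA_3$, $\bfB_3$, $\bfE_5$) this causes no harm because a one-parameter family of such automorphisms is available and the parameter can be adjusted for each $t$; but your proof of Part~1 as written, like the paper's, does not go through for a single fixed automorphism.
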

\begin{proof}

\medskip
\noindent{\bf 1.} Assume that $f\in I_m(\algA)$, $a,b,c\in\FF$ with $a^t c^{m-t}\neq1$ for all $0\leqslant t\leqslant m-1$ and $g^{-1}f=f$, where
$$g=\matr{a}{0}{b}{c}.$$ 
To complete the proof it is enough to show that $f\in\FF[x_1,\ldots,x_m]$. Moreover, it is easy to see that by Proposition~\ref{prop_char0} without loss of generality we can assume that $\mdeg(f)=1^m$.  Then 
$$f=\sum_{(\un{r},\un{s})\in \Omega_m} \al_{\un{r},\un{s}} x_{\un{r}} y_{\un{s}}$$
for some $\al_{\un{r},\un{s}}\in \FF$. By formulas~(\ref{eq_action2}) we have 
$$g^{-1}f=\sum_{(\un{r},\un{s})\in \Omega_m} \al_{\un{r},\un{s}} a^{\#\un{r}}\, x_{\un{r}} (b\, x_{s_1} + c\, y_{s_1}) \cdots (b\, x_{s_l} + c\, y_{s_l}), $$
where $l$ stands for $\#\un{s}$. To complete the proof of part 1, we claim that

\begin{eq}\label{eq_claim2}
\al_{\un{r},\un{s}}=0 \text{ for all }(\un{r},\un{s})\in\Omega_m \text{ with }\#\un{r}<m. 
\end{eq}

We prove claim~(\ref{eq_claim2}) by the increasing induction on $0\leqslant \#\un{r}<m$. Note that since $f=g^{-1}f$, for every monomial $w=x_{\un{r}} y_{\un{s}}$, where $(\un{r},\un{s})\in \Omega_m$, we have that the coefficient of $w$ in $f$ is equal to the coefficient of $w$ in $g^{-1}f$.

Assume $\#\un{r}=0$. Consider $w=y_1\cdots y_m$. Then 
\begin{eq}\label{eq_coef1}
\al_{\emptyset,(1\cdots m)} =  \al_{\emptyset,(1\cdots m)} c^m.
\end{eq}%
Since $c^m\neq 1$, we obtain that $\al_{\emptyset,(1\cdots m)}=0$, i.e., claim~(\ref{eq_claim2}) holds for $\#\un{r}=0$.

Assume $\#\un{r}=1$ for $m>1$, i.e., $\un{r}=(r)$. Consider $w=x_r y_1\cdots \widehat{y_r} \cdots y_m$, where $1\leqslant r\leqslant m$. Since $\al_{\emptyset,(1\cdots m)}=0$, then
\begin{eq}\label{eq_coef2}
\al_{r,(1\cdots \hat{r} \cdots m)} =  a c^{m-1} \al_{r,(1\cdots \hat{r} \cdots m)}.
\end{eq}%
Since $ac^{m-1}\neq 1$, then $\al_{r,(1\cdots \hat{r} \cdots m)}=0$, i.e., claim~(\ref{eq_claim2}) holds for $\#\un{r}=1$.

Given $0<t<m$, assume claim~(\ref{eq_claim2}) holds for all $(\un{r'},\un{s'})\in\Omega_m$ with $\#\un{r}'<t$. Consider $w=x_{\un{r}} y_{\un{s}}$ for some $(\un{r},\un{s})\in\Omega_m$ with $\#\un{r}=t$. Since  $\al_{\un{r'},\un{s'}}=0$ in case $\#\un{r'}<t$, we have
\begin{eq}\label{eq_coef3}
\al_{\un{r},\un{s}} =  a^t c^{m-t} \al_{\un{r},\un{s}}.
\end{eq}%
Since $a^t c^{m-t}\neq 1$, then $\al_{\un{r},\un{s}}=0$,  i.e., claim~(\ref{eq_claim2}) holds for $\#\un{r}=t$. Therefore, claim~(\ref{eq_claim2})  is proven.

\medskip
\noindent{\bf 2.} Assume that $f\in I_m(\algA)$ and for every $c\in K$ we have that $g^{-1}f=f$, where
$$g=\matr{a}{0}{0}{c}.$$ 
To complete the proof it is enough to show that $f\in\FF[x_1,\ldots,x_m]$. As in part 1, without loss of generality we can assume that $\mdeg(f)=1^m$ and 
$$f=\sum_{(\un{r},\un{s})\in \Omega_m} \al_{\un{r},\un{s}} x_{\un{r}} y_{\un{s}}$$
for some $\al_{\un{r},\un{s}}\in \FF$. By formulas~(\ref{eq_action2}) we have
$$g^{-1}f=\sum_{(\un{r},\un{s})\in \Omega_m} \al_{\un{r},\un{s}} a^{\#\un{r}}\, c^{\#\un{s}}\, x_{\un{r}} y_{\un{s}}.$$
Since $f=g^{-1}f$ for all $c\in K$, for every monomial $w=x_{\un{r}} y_{\un{s}}$, where $(\un{r},\un{s})\in \Omega_m$, the coefficient of $w$ in $f$ is equal to the coefficient of $w$ in $g^{-1}f$, i.e.,
$$\al_{\un{r},\un{s}} = a^{\#\un{r}}\, c^{\#\un{s}}\, \al_{\un{r},\un{s}}$$
for every $c\in K$. Therefore, $\al_{\un{r},\un{s}}=0$ in case $\#\un{s}>0$. Hence,  $f\in\FF[x_1,\ldots,x_m]$.

\medskip
\noindent{\bf 3.} The proof is similar to the proof of part 2. 
\end{proof}

%-------------------------------------------------------
\begin{lemma}\label{lemma_A1}
If $\algA$ is $\bfA_1(\al)$ or $\bfA_2$, then the algebra $I_m(\algA)$ is generated by 
$$1,x_1,\ldots,x_m \;\text{  and }\; x_r y_s - y_r x_s \; (1\leqslant r<s\leqslant m);$$ 
\end{lemma}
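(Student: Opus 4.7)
The plan is first to establish the case $m = 2$ via a localization argument and then to invoke Weyl's polarization theorem (Theorem~\ref{theo_Weyl}) for $m > 2$. From Table~1, for both $\algA = \bfA_1(\al)$ and $\algA = \bfA_2$ the automorphism group $G := \Aut(\algA)$ consists of the matrices $g_b = \smatr{1}{0}{b}{1}$, $b \in \FF$. By equation~(\ref{eq_action2}), the action of $G$ on $\FF[\algA^m]$ fixes each $x_r$ and sends $y_r \mapsto b x_r + y_r$; a direct substitution verifies that each proposed generator $x_r$ and $\de_{rs} := x_r y_s - y_r x_s$ (with $r < s$) is $G$-invariant.

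For the base case $m = 2$, I would show $I_2(\algA) = \FF[x_1, x_2, \de_{12}]$ as follows. Set $R = \FF[x_1, x_2, y_1, y_2]$ and localize at $x_1$. The $\FF[x_1^{\pm 1}, x_2]$-linear change of variables that replaces $y_2$ by $z_2 := x_1 y_2 - x_2 y_1 = \de_{12}$ transforms $R_{x_1}$ into $\FF[x_1^{\pm 1}, x_2, y_1, z_2]$, on which the $G$-action becomes $y_1 \mapsto y_1 + b x_1$ with $x_1, x_2, z_2$ all fixed. Since $\FF$ is infinite, any $G$-invariant in $R_{x_1}$ must be independent of $y_1$, and hence $(R_{x_1})^G = \FF[x_1^{\pm 1}, x_2, \de_{12}]$. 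Consequently, any $f \in R^G$ admits a unique expansion $f = \sum_{k \geq 0} f_k(x_1, x_2) \de_{12}^k$ with $f_k \in \FF[x_1^{\pm 1}, x_2]$. The main technical step is then to rule out negative powers of $x_1$: expanding $\de_{12}^k = (x_1 y_2 - x_2 y_1)^k$, the monomial $y_1^k$ appears with coefficient $(-x_2)^k$, so the coefficient of $y_1^k$ in $f$ equals $(-x_2)^k f_k(x_1, x_2)$. Since $f \in R$ is a genuine polynomial, this coefficient lies in $\FF[x_1, x_2]$, which forces $f_k \in \FF[x_1, x_2]$.

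The case $m = 1$ is handled directly in the same spirit: a polynomial in $\FF[x_1, y_1]$ invariant under $y_1 \mapsto y_1 + b x_1$ for all $b \in \FF$ must be independent of $y_1$, giving $I_1(\algA) = \FF[x_1]$. For $m > 2$, Theorem~\ref{theo_Weyl} reduces the problem to polarizing the set $\{x_1, x_2, \de_{12}\} \subset \FF[\algA^2]$ up to $\FF[\algA^m]$. Applying Definition~\ref{def_polar} directly yields $\Pol_2^m(x_i) = \{x_1, \ldots, x_m\}$ for $i \in \{1,2\}$; and expanding
\[
\Phi(\de_{12}) = \Phi(x_1)\Phi(y_2) - \Phi(y_1)\Phi(x_2) = \sum_{s < t}(a_{1,s} a_{2,t} - a_{1,t} a_{2,s})\,\de_{st}
\]
gives $\Pol_2^m(\de_{12}) = \{\de_{rs} : 1 \leqslant r < s \leqslant m\}$. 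Together, these polarizations recover exactly the claimed generating set.
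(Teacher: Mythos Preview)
Your proof is correct. The reduction to $m=2$ via Weyl's polarization theorem and the computation of $\Pol_2^m$ match the paper exactly (the paper's Claim~1). The difference lies in how the base case $I_2(\algA)=\FF[x_1,x_2,\de_{12}]$ is established. The paper argues by induction on the multidegree $(\de,\la)$ of an $\NN^2$-homogeneous invariant $f$: it uses the rewriting rule $x_2y_1=x_1y_2-h_{12}$ to put $f$ into a normal form, then compares the coefficient of $x_1^{\de}x_2^{\la}$ in $f$ and in $g^{-1}f$ (as a polynomial in the parameter $a$) to kill the non-trivial part and peel off a factor of $h_{12}$. Your argument instead localizes at $x_1$, performs the triangular change of variable $y_2\rightsquigarrow z_2=\de_{12}$ so that the $\mathbb{G}_a$-action becomes a pure translation in $y_1$, reads off $(R_{x_1})^G=\FF[x_1^{\pm1},x_2,\de_{12}]$, and then uses the monomial $y_1^k y_2^0$ as a witness that each coefficient $f_k$ has no pole along $x_1$. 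Your route is slicker and more conceptual (it is essentially the slice method for the additive group), while the paper's inductive rewriting is more elementary and self-contained; both are short and either would be acceptable here.
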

\begin{proof} Denote the set from the formulation of the lemma by $S_m$. It is easy to see that $S_m\subset I_m(\algA)$. For short, denote $h_{rs}=x_r y_s - y_r x_s$. The statement of lemma is a consequence of Theorem~\ref{theo_Weyl} and the following two claims.

\medskip
\noindent{\it Claim 1}. We have $\Pol_2^m(S_2)\subset \FF{\rm-span}(S_m)$ for every $m>2$.

To prove Claim 1, consider $\Pol_2^m(x_r)=\{x_1,\ldots,x_m\}$,  since we have
$$\Phi_{2,m}(x_r)=a_{r1} x_1 + \cdots + a_{rm} x_m$$
for $r=1,2$ (see Definition~\ref{def_polar} for the details). Similarly, $\Pol_2^m(h_{12})=\{h_{rs}\,|\,1\leqslant r,s\leqslant m\}$, since
$$\begin{array}{cl}
\Phi_{2,m}(h_{12}) & =(a_{11} x_1 + \cdots + a_{1m} x_m) (a_{21}y_1 + \cdots +a_{2m}y_m) \\
                   &-\; (a_{21}x_1 +\cdots + a_{2m}x_m)(a_{11}y_1 + \cdots+ a_{1m} y_m) \\
                    &= \sum\limits_{1\leqslant r,s \leqslant m} a_{1r} a_{2s} h_{rs}.
\end{array}
$$
Claim 1 is proven.

\medskip
\noindent{\it Claim 2}. The set $S_2$ generates $I_2(\algA)$.

We have $g=\matr{1}{0}{a}{1}$. Consider an $\NN^2$-homogeneous invariant $f\in I_2(\algA)$ of multidegree $\De=(\de,\la)$, where we assume that $f$ does not contain a monomial $x_1^{\de} x_2^{\la} \in I_2(\algA)$. We prove by induction on $|\De|>0$ that $f\in\alg\{S_2\}$.

Assume $\De=(\de,0)$ for $\de>0$. Then $f=\sum_{i=1}^{\de} \al_i x_1^{\de-i} y_1^i$ for some $\al_1,\ldots,\al_{\de}\in\FF$ and 
$$g^{-1}f=\sum_{i=1}^{\de} \al_i x_1^{\de-i} (a x_1+y_1)^i=f.$$% 
Since the coefficients of $x_1^{\de}$ are $\al_1 a+\al_2 a^2+\cdots + \al_{\de} a^{\de}=0$, we obtain that $f=0$. Similarly, we obtain that if $\De=(0,\la)$ for $\la>0$, then $f=0$.

Assume $\De=(\de,\la)$ for $\de,\la>0$. Then $f=\sum \al_{ij} x_1^{\de-i} y_1^i x_2^{\la-j} y_2^j$
for some $\al_{ij}\in\FF$, where the sum ranges over all $0\leqslant i\leqslant \de$, $0\leqslant j\leqslant \la$ with $(i,j)\neq(0,0)$. Applying equality $x_2y_1 = x_1y_2 -h_{12}$ to monomials of $f$, we can rewrite $f$ as follows
$$f=\sum_{i=1}^{\de} \be_i x_1^{\de-i} y_1^i y_2^{\la} + \sum_{j=1}^{\la} \ga_j x_1^{\de} x_2^{\la-j} y_2^j + h_{12} \tilde{f}$$
for some $\be_j,\ga_j\in\FF$ and $\tilde{f}\in\FF[\algA^2]$ of multidegree $(\de-1,\la-1)$. Note that in the first sum we do not have the case of $i=0$, since otherwise the first and the second sums would contain one and the same monomial $x_1^{\de} y_2^{\la}$. Consider
$$g^{-1}f=\sum_{i=1}^{\de} \be_i x_1^{\de-i} (ax_1+y_1)^i (ax_2+y_2)^{\la} + \sum_{j=1}^{\la} \ga_j x_1^{\de} x_2^{\la-j} (ax_2+y_2)^j + h_{12} (g^{-1}\tilde{f}) = f.$$
The coefficients of $x_1^{\de} x_2^{\la}$ are $\sum_{i=1}^{\de} \be_i a^{i+\la} + \sum_{j=1}^{\la} \ga_j a^j=0$. Therefore, $\be_1=\cdots=\be_{\de}=0$ and $\ga_1=\cdots=\ga_{\la}=0$. Hence, $f=h_{12}\tilde{f}$ and $\tilde{f}\in I_2(\algA)$. The induction hypothesis concludes the proof of claim 2.

%Since $x_i y_j- x_j y_i\in S$, modulo $\alg\{S\}$ we can assume that 
%$$f=\sum_{i=0}^m \al_i\, x_1 \cdots x_i y_{i+1} \cdots y_m$$
%for some $\al_0,\ldots,\al_m\in\FF$. Moreover, modulo $\alg\{S\}$ we can assume that $\al_m=0$, since $x_1,\ldots, x_m\in S$.
%
%We have $g=\matr{1}{0}{a}{1}$ and
%
%$$\begin{array}{clc}
%g^{-1}f & = \al_0\, (a x_{1} + y_{1}) \cdots (a x_m + y_m)& \\ 
%        &   +\;  \al_1\,x_1 (a x_2 + y_2) \cdots (a x_m + y_m)&  \\
%        &   +\;  \al_2\,x_1 x_2 (a x_3 + y_3) \cdots (a x_m + y_m)&  \\
%        &  \qquad\qquad\qquad\vdots \\
%        &  +\; \al_{m-1}\, x_1 \cdots x_{m-1} (a x_m + y_m) &= f. \\
%\end{array}$$
%
%\noindent{}Consider the coefficients of $y_1 x_2 \ldots x_m$ to obtain that $\al_0 a^{m-1} =0$, i.e., $\al_0=0$. Thus the coefficients of $x_1 y_2 x_3 \ldots x_m$ are $\al_1 a^{m-2} =0$, i.e., $\al_1=0$. Thus the coefficients of $x_1 x_2  y_3 x_4 \ldots x_m$ are $\al_2 a^{m-3} =0$, i.e., $\al_2=0$, etc. Hence, we obtain that $\al_1=\cdots=\al_{m-2}=0$.  Consider the coefficient of $x_1\cdots x_m$ to obtain that $\al_{m-1} a=0$, i.e., $\al_{m-1}=0$. Therefore, $f=0$ and the required statement is proven. 
\end{proof}

%-------------------------------------------------------
\begin{remark}\label{remark_S2}
Given the group $\Sym_2=\{ {\rm Id}, J\}\leqslant \GL_2$, the algebra $\FF[\algA^m]^{\Sym_2}$ is minimally generated by $1$, $x_r+y_r$ ($1\leqslant r\leqslant m$), $x_r y_s + y_r x_s$ ($1\leqslant r\leqslant s\leqslant m$).
\end{remark}
\begin{proof} By Theorem 2.5 of~\cite{domokos2009vector}, the algebra $\FF[\algA^m]^{\Sym_2}$ is minimally generated by $1$, $x_r+y_r$ ($1\leqslant r\leqslant m$), $x_r x_s + y_r y_s$ ($1\leqslant r\leqslant s\leqslant m$). Since $(x_r+y_r)(x_s+y_s) = (x_r x_s + y_r y_s) + (x_r y_s + y_r x_s)$, the claim of this remark is proven. 
\end{proof}

%-------------------------------------------------------
\begin{lemma}\label{lemma_E1}
If $\algA=\bfE_1(-1,-1,-1,-1)$, then the algebra $I_m(\algA)$ is generated by 
$$1, \quad H_{rs}=2 x_r x_s - x_r y_s - y_r x_s + 2 y_r y_s\quad (1\leqslant r\leqslant s\leqslant m),  $$
$$T_{rst}=2 x_r x_s x_t - x_r x_s y_t - x_r y_s x_t - x_r y_s y_t- y_r x_s x_t - y_r x_s y_t - y_r y_s x_t + 2 y_r y_s y_t \quad
(1\leqslant r\leqslant s\leqslant t\leqslant m). $$
\end{lemma}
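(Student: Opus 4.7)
The plan is to verify that $H_{rs}$ and $T_{rst}$ are invariants, reduce the case $m\geqslant 3$ to $m=2$ by Weyl's polarization theorem, and handle $m=1,2$ directly.

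By Lemma~\ref{lemma_AutE1}, $\Aut(\algA)\simeq\Sym_3$ is generated by $J=\smatr{0}{1}{1}{0}$ and $g=\smatr{-1}{0}{-1}{1}$. Formula~(\ref{eq_action2}) shows that $J$ acts on $\FF[\algA^m]$ by swapping $x_r\leftrightarrow y_r$, under which $H_{rs}$ and $T_{rst}$ are visibly fixed since these expressions are symmetric in $x\leftrightarrow y$. Under $g$, one has $x_r\mapsto -x_r$ and $y_r\mapsto -x_r+y_r$; a direct substitution and collection of terms confirms invariance for both $H_{rs}$ and $T_{rst}$.

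Applying Theorem~\ref{theo_Weyl} with $n=\dim\algA=2$, it suffices to generate $I_2(\algA)$ and then polarize. Unwinding Definition~\ref{def_polar} gives
$$\Pol_2^m\bigl(\{H_{rs}\}_{1\leqslant r\leqslant s\leqslant 2}\bigr)=\{H_{rs}\,|\,1\leqslant r\leqslant s\leqslant m\},$$
$$\Pol_2^m\bigl(\{T_{rst}\}_{1\leqslant r\leqslant s\leqslant t\leqslant 2}\bigr)=\{T_{rst}\,|\,1\leqslant r\leqslant s\leqslant t\leqslant m\},$$
which matches the generators claimed for $m\geqslant 3$. For $m=1$, $\Sym_3$ acts on $\algA$ as a complex reflection group (the transposition $J$ has $\det=-1$); by Chevalley--Shephard--Todd, $I_1(\algA)$ is polynomial in two generators of degrees equal to the fundamental degrees $\{2,3\}$ of $\Sym_3$, and since $H_{11},T_{111}$ are nonzero invariants of these degrees, they serve as the fundamental invariants.

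It remains to establish generation for $m=2$, which is the main obstacle. By Noether's theorem~\ref{theo_Noether}, $\beta(I_2(\algA))\leqslant 6$, so one needs only to handle total degrees $\leqslant 6$. The Molien series of $\Sym_3$ acting on $\algA^2\simeq \FF^4$, using the three conjugacy classes (identity, three transpositions, two $3$-cycles), is
$$M(t)=\frac{1}{6}\left[\frac{1}{(1-t)^4}+\frac{3}{(1-t^2)^2}+\frac{2}{(1+t+t^2)^2}\right]=1+3t^2+4t^3+6t^4+10t^5+17t^6+\cdots,$$
and exactly gives the dimension of each graded piece of $I_2(\algA)$. The quadratic generators $H_{11},H_{12},H_{22}$ and cubic generators $T_{111},T_{112},T_{122},T_{222}$ already account for the full dimensions $3$ and $4$ in degrees $2$ and $3$. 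In degrees $4$, $5$ and $6$, a finite linear-algebra check verifies that the $6$, $12$ and $20$ respective products of these seven elements span the corresponding $6$-, $10$- and $17$-dimensional components of $I_2(\algA)$, which closes the proof.
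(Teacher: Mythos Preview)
Your argument is correct and takes a genuinely different route from the paper. The paper does not polarize down to $m=2$; instead it quotes the bound $\beta(I_m(\algA))\leqslant 4$ from \cite{Cziszter_Domokos_Szollosi_2018}, uses Proposition~\ref{prop_char0} to reduce to \emph{multilinear} invariants of multidegree $1^m$ for $1\leqslant m\leqslant 4$, and then computes these via the transfer map $\Upsilon$ from the subgroup $\Sym_2=\{\mathrm{Id},J\}$, exploiting the known generators $f_r=x_r+y_r$, $f_{rs}=x_ry_s+y_rx_s$ of $\FF[\algA^m]^{\Sym_2}$ (Remark~\ref{remark_S2}) together with identities such as $\Upsilon(f_rf_s)=6H_{rs}$ and $\tfrac{3}{2}\Upsilon(f_{12}f_{34})=-H_{12}H_{34}+2H_{13}H_{24}+2H_{14}H_{23}$. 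Your approach trades the external citation for Noether's weaker bound $\beta\leqslant 6$ and replaces the transfer-map identities by a Molien dimension count, with Chevalley--Shephard--Todd handling $m=1$ structurally (this is legitimate: the three order-$2$ elements in Lemma~\ref{lemma_AutE1} all have determinant $-1$, hence are reflections generating $\Sym_3$). The cost is that your spanning claim in degrees $4,5,6$ is asserted rather than exhibited; it is true and routine (the bigraded Molien series localizes it to a few small checks, e.g.\ in bidegree $(2,2)$ one only needs $H_{11}H_{22}$ and $H_{12}^2$ to be independent), whereas the paper's transfer identities make the analogous step fully explicit.
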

\begin{proof} Denote the set from the formulation of the lemma by $S_m$. %It is easy to see that $S_m\subset I_m(\algA)$. 
Since $\Aut(\algA)$ is isomorphic to $\Sym_3$ (see Lemma~\ref{lemma_AutE1}) and $\Sym_2=\{{\rm Id}, J\}<\Sym_3$, then we can consider the transfer map  
$$\Upsilon: \FF[\algA^m]^{\Sym_2} \to \FF[\algA^m]^{\Sym_3},$$%
which was defined in Section~\ref{section_finite_group}. By Remark~\ref{remark_S2}, the algebra $\FF[\algA^m]^{\Sym_2}$ is generated by $1$, $f_r:=x_r+y_r$ ($1\leqslant r\leqslant m$), $f_{rs}:=x_r y_s + y_r x_s$ ($1\leqslant r\leqslant s\leqslant m$).
Since $\Upsilon(f_r f_s)=6 H_{rs}$ and $\Upsilon(f_r f_s f_t)=-6 T_{rst}$, we obtain $S_m\subset I_m(\algA)$.

By Table 1 of~\cite{Cziszter_Domokos_Szollosi_2018}, the algebra $I_m(\algA)$ is generated by its $\NN^m$-homogeneous elements of degree $\leqslant4$. Therefore, by Proposition~\ref{prop_char0}, to complete the proof, it suffices to show that 
\begin{eq}\label{eq_claim_Ups}
\text{each } h\in I_m(\algA) \text{ of multidegree } \De=1^m  \text{ belongs to the subalgera of }  I_m(\algA) \text{ generated by }S_m,%
\end{eq}%
for every $1\leqslant m\leqslant 4$. By Remark~\ref{remark_transfer}, in claim~(\ref{eq_claim_Ups}) we can assume that $h=\Upsilon(f)$, where $f$ ranges over some basis $B$ of the $\NN^m$-homogeneous component of $\FF[\algA^m]^{\Sym_2}$ of multidegree $\De=1^m$.

\begin{enumerate}
\item[1.] In case $\De=(1)$, we consider $B=\{f_1\}$ to see that $h=0$, since  $\Upsilon(f_1)=0$.

\item[2.] Let $\De=(11)$. Then consider $B=\{f_1 f_2, f_{12}\}$ and use the equalities $\Upsilon(f_1 f_2)= 6 H_{12}$ and $\Upsilon(f_{12})=2 H_{12}$ to prove claim~(\ref{eq_claim_Ups}) for $m=2$. 

\item[3.] Let $\De=(111)$. Then consider $B\subset \{f_1 f_2 f_3,\; f_1 f_{23},\; f_2 f_{13},\; f_3 f_{12}\}$ and use the equalities $\Upsilon(f_1 f_2 f_3)= - 6 T_{123}$ and $\Upsilon(f_1 f_{23})=-4 T_{123}$, $\Upsilon(f_2 f_{13})=-4 T_{123}$, $\Upsilon(f_3 f_{12})=-4 T_{123}$ to prove claim~(\ref{eq_claim_Ups}) for $m=3$.

\item[4.] Let $\De=(1111)$. Then consider 
$$B\subset \{f_1 f_2 f_3 f_4,\;  f_{rs} f_{tq},\; f_r f_s f_{tq} \,|\, 
\{r,s,t,q\}=\{1,2,3,4\}\}$$
and use the equalities 
$$\begin{array}{rcl}
\frac{1}{3}\Upsilon(f_1 f_2 f_3 f_4) & = & H_{12}H_{34} + H_{13}H_{24} + H_{14}H_{23},\\ 
\frac{1}{2}\Upsilon(f_1 f_2 f_{34})  & = & H_{13}H_{24} + H_{14}H_{23},\\
\frac{3}{2}\Upsilon(f_{12} f_{34})   & = & -H_{12}H_{34} + 2 H_{13}H_{24} + 2 H_{14}H_{23},\\
\end{array}$$
together with equality $H_{rs}=H_{sr}$ and the symmetry of $T_{rsl}$ with respect to permutations of $\{r,s,l\}$ to prove claim~(\ref{eq_claim_Ups}) for $m=4$.
\end{enumerate}
\end{proof}

We will prove the minimality of a generating set for $I_m(\algA)$ using the following remark.

\begin{remark}\label{remark_min}
Assume that the algebra $I_m(\algA)$ is generated by a set $S=\{1,f_1,\ldots,f_d\}$ of $\NN^m$-homogeneous elements, which is {\it multidegree-irreducible}, i.e., for every $1\leqslant i\leqslant d$ we have
\begin{enumerate}
\item[$\bullet$] $f_i\not \in\FF$;

\item[$\bullet$] $\mdeg(f_i) \neq \mdeg(f_{j_1}) +\cdots + \mdeg(f_{j_k})$ for every $j_1,\ldots,j_k\in\{1,\ldots,d\}\backslash\{ i\}$ with $k\geq 1$.
\end{enumerate}
Then $S$ is a minimal generating set for $I_m(\algA)$.
\end{remark}

%-------------------------------------------------------
\subsection{General case}\label{section_general}

%Due to Remark~\ref{remark_trivial}, we are interested in the case where the automorphism group of an algebra is not trivial. 

%-------------------------------------------------------
\begin{theo}\label{theo_gens} Assume that the characteristic of $\FF$ is zero and $\algA$  is a two-dimensional algebra. If the group of automorphisms of $\algA$ is trivial, then $I_m(\algA)=\FF[\algA^m]$. Otherwise, modulo isomorphism, $\algA$ belongs to the following list, where $\al,\be\in\FF$ and $m>0$:
%\begin{enumerate}
%\item[(a)(ok)] if $\algA$ is $\bfA_1(\al)$ or $\bfA_2$, then the algebra $I_m(\algA)$ is %minimally generated by 
%$$1,x_1,\ldots,x_m \;\text{  and }\; x_i y_j - x_j y_i\; (1\leqslant i<j\leqslant m);$$ 
%
%\item[(b)(ok)] $I_m(\bfA_3)=\FF$;
%
%\item[(c)(ok)] $I_m(\bfB_2(\al)) = \FF[y_1,\ldots,y_m]$;
%
%\item[(d)(ok)] $I_m(\bfB_3) = \FF[x_1,\ldots,x_m]$;
%
%\item[(e)(ok)] $I_m(\bfD_2(\al,\be)) = \FF[x_1,\ldots,x_m]$ for $(\al,\be)\not\in\calT$;
%
%\item[(f)(ok)] $I_m(\bfE_5(\al))$ is minimally  generated by $1$ and $x_i+y_i$ $(1\leqslant %i\leqslant m)$.
%\end{enumerate}

\begin{longtable}{l|c}
$\algA:$ & A minimal generating set for the algebra $I_m(\algA)$: \\
\hline
$\bfA_1(\al)$ &   $1,x_1,\ldots,x_m$ and $x_r y_s - y_r x_s$ ($1\leqslant r<s\leqslant m$) \\  
\hline
$\bfA_2$ &  $1,x_1,\ldots,x_m$ and $x_r y_s - y_r x_s$ ($1\leqslant r<s\leqslant m$) \\  
\hline
$\bfA_3$ &  $1$  \\  
\hline
$\bfA_4(0)$ &  $1, x_r x_s$  ($1\leqslant r\leqslant s\leqslant m$), $y_1,\ldots,y_m$   \\  
\hline

%$\bfB_1(\al)$ &   \\  
%\hline
$\bfB_2(\al)$ &  $1,y_1,\ldots,y_m$\\  
\hline
$\bfB_3$ &  $1,x_1,\ldots,x_m$ \\ 
\hline

$\bfC(\al,0)$ & $1, x_r x_s$  ($1\leqslant r\leqslant s\leqslant m$), $y_1,\ldots,y_m$   \\  
\hline

$\bfD_1(\al,2\al-1)$,\;\;\;\;\; $(\al,2\al-1)\in \calU$ &  $1$, $2x_r + y_r$ ($1\leqslant r\leqslant m$), $y_r y_s$ ($1\leqslant r\leqslant s\leqslant n$)    \\  
\hline
$\bfD_2(\al,\be)$,\;\;\;\;\; $(\al,\be)\not\in\calT$  & $1,x_1,\ldots,x_m$ \\ 
\hline
%$\bfD_3(\al,\be)$,\;\;\;\;\;  $(\al,\be)\not\in\calT$ &   \\  
%\hline

$\bfE_1(\al,\be,\be,\al)$, &   $1$,  $x_r+y_r$, $x_r y_r$ ($1\leqslant r\leqslant m$), $x_r y_s + y_r x_s$ ($1\leqslant r<s\leqslant m$)  \\
$(\al,\be,\be,\al)\in\calV$, $(\al,\be)\neq (-1,-1)$ &\\
\hline
$\bfE_1(-1,-1,-1,-1)$&  $1$, $2 x_r x_s - x_r y_s - y_r x_s + 2 y_r y_s$ ($1\leqslant r\leqslant s\leqslant m$),   \\
& $2 x_r x_s x_t - x_r x_s y_t - x_r y_s x_t - x_r y_s y_t-$ \\ 
& $- y_r x_s x_t - y_r x_s y_t - y_r y_s x_t + 2 y_r y_s y_t$\\
& ($1\leqslant r\leqslant s\leqslant t\leqslant m$) \\
\hline
%$\bfE_2(\al,\be,\ga)$, \;  $(\be,\ga)\not\in\calT$ &     \\ 
%\hline
$\bfE_3(\al,\al,-1)$ \, &  $1$,  $x_r+y_r$, $x_r y_r$ ($1\leqslant r\leqslant m$), $x_r y_s + y_r x_s$ ($1\leqslant r<s\leqslant m$)   \\ 
\hline
%$\bfE_4$ &   \\  
%\hline
$\bfE_5(\al)$ &  $1$ and $x_r + y_r$ $(1\leqslant r\leqslant m)$ \\  
\hline
$\bfN$ &  $1$\\  
\hline
\end{longtable}
\end{theo}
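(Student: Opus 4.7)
The plan is to proceed case-by-case through Table~1. For algebras not listed in the theorem statement, one reads off Table~1 (together with Lemma~\ref{lemma_AutE1}) that $\Aut(\algA)$ is trivial, so $I_m(\algA)=\FF[\algA^m]$. The remaining cases are grouped by the structure of $\Aut(\algA)$.

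First, I would treat the algebras whose automorphism group contains an infinite one-parameter subgroup, namely $\bfA_3, \bfB_2(\al), \bfB_3, \bfD_2(\al,\be), \bfE_5(\al)$, and $\bfN$. For each, I extract a diagonal or scalar subfamily from $\Aut(\algA)$ and apply Proposition~\ref{prop_aut1} to confine $I_m(\algA)$ to a polynomial subalgebra in only one half of the coordinate variables. The residual action on that subalgebra is then either trivial (yielding $\FF[x_1,\ldots,x_m]$ or $\FF[y_1,\ldots,y_m]$, as for $\bfB_2, \bfB_3, \bfD_2$) or scales all nonconstant monomials nontrivially (forcing $I_m(\algA)=\FF$, as for $\bfA_3$ and $\bfN$). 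The case $\bfE_5(\al)$ requires the preliminary change of basis $e_1' = e_1 + e_2$, $e_2' = e_1 - e_2$; in the new coordinates $\Aut(\algA)$ becomes an upper-triangular group with diagonal entry ranging over $\FF^\times$, and Proposition~\ref{prop_aut1}(2) then yields the stated generators $x_r + y_r$.

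Next, I would handle the algebras with $\Aut(\algA)$ finite. For $\bfA_4(0)$ and $\bfC(\al,0)$ the group is $C_2$ generated by $\smatr{-1}{0}{0}{1}$, whose invariants are directly the monomials of even total $x$-degree, yielding the stated generators. For $\bfD_1(\al,2\al-1)$ the involution $\smatr{1}{1}{0}{-1}$ is diagonalised by the linear change $u_r = 2x_r + y_r$, $v_r = y_r$ (eigenvalues $+1$ and $-1$), reducing to the previous type. The cases $\bfE_1(\al,\be,\be,\al)$ with $(\al,\be)\neq(-1,-1)$ and $\bfE_3(\al,\al,-1)$ have involution $J$, for which Remark~\ref{remark_S2} supplies exactly the listed generators. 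The three remaining cases, $\bfA_1(\al), \bfA_2$, and $\bfE_1(-1,-1,-1,-1)$, are handled by Lemma~\ref{lemma_A1} and Lemma~\ref{lemma_E1} directly.

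Minimality is verified via Remark~\ref{remark_min} whenever the multidegrees of the proposed generators are pairwise non-additive (the cases with a single type of generator), and otherwise through the finer bigrading by $(x,y)$-degree (or by $(u,v)$-degree after the change of variables), in which the linear and quadratic generators occupy disjoint components and so cannot combine to match each other's bidegree. The main obstacle will be the $\bfE_1(-1,-1,-1,-1)$ case, where the non-abelian $\Sym_3$-action precludes any simple eigendecomposition: one must, as in Lemma~\ref{lemma_E1}, bound the generating degree by $4$, enumerate multilinear invariants of each multidegree up to $m=4$ via the transfer map from the index-two subgroup $\Sym_2$, express each on the proposed generating set, and then polarise to higher $m$ using Proposition~\ref{prop_char0}.
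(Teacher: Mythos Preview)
Your proposal is correct and follows essentially the same case-by-case strategy as the paper: Proposition~\ref{prop_aut1} for the infinite groups, direct diagonal computation for the order-two groups, Remark~\ref{remark_S2} for the $J$-cases, and Lemmas~\ref{lemma_A1} and~\ref{lemma_E1} for the remaining ones. Your treatment of $\bfD_1(\al,2\al-1)$ via diagonalising the involution is a mild shortcut over the paper's direct degree-$\leqslant 2$ computation (the paper uses Noether's bound and solves the linear system by hand), and your coordinate change for $\bfE_5$ is equivalent to the paper's substitution $z_r=x_r+y_r$; otherwise the arguments coincide.
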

\begin{proof}  If the group of automorphisms of $\algA$ is trivial, then Remark~\ref{remark_trivial} completes the proof. Therefore, we assume that $\Aut(\algA)$ is non-trivial. 

We apply Table 1 (see Section~\ref{section_simple2dim}) to see that $\algA$ is isomorphic to an algebra from one of the items of the theorem.  Therefore, we can assume that $\algA$ is an algebra from one of the items of the theorem. Denote by $S$ the subset of $\FF[\algA^m]$, which is claimed to be a generating set for $I_m(\algA)$.  Considering an arbitrary automorphism $g\in \Aut(\algA)$, given in Table 1, and using formulas~(\ref{eq_action2}), it is easy to see that $S\in I_m(\algA)$. Note that $\pi_{\un{r}}(S)\subset \alg\{S\}$ for every $\un{r}\in \calcP^{t}_m$, where $t\geqslant m$. Therefore,  to complete the proof, it is enough to show that each multilinear non-constant invariant
$$f=\sum_{(\un{i},\un{j})\in \Omega_m} \al_{\un{i},\un{j}} x_{\un{i}} y_{\un{j}}  \;\text{ from }\; I_m(\algA)$$
lies in the subalgebra $\alg\{S\}$ generated by $S$ (see Proposition~\ref{prop_char0}), where $\al_{\un{i},\un{j}}\in\FF$. Let $g=g(a,b,c)$ be an arbitrary non-identity element of $\Aut(\algA)$ given in Table 1, where  $a,b,c\in\FF$ are arbitrary elements with $b\neq0$ and $a\neq c$.

\begin{enumerate}
\item[1.] Let $\algA$ be $\bfA_1(\al)$ or $\bfA_2$. Then see Lemma~\ref{lemma_A1}.

\item[2.]  Let $\algA=\bfA_3$. Then $g=\matr{b}{0}{a}{b^2}$. Since $\FF$ is infinite, we can assume that $b^t\neq1$ for all $1\leqslant t\leqslant 2m$. Then we can apply part 1 of Proposition~\ref{prop_aut1} and obtain that $f\in \FF[x_1,\ldots,x_m]$, i.e., $f=\al x_1\cdots x_m$ for some $\al\in\FF$. Since $g^{-1} f = \al b^m x_1\cdots x_m = f$, we have that $\al=0$ and the required statement is proven.

\item[3.]  Let $\algA$ be $\bfA_4(0)$ or $\bfC(\al,0)$. Then $g=\matr{-1}{0}{0}{1}$.  By Lemma~\ref{lemma_diag}, we can assume that $f=x_{\un{i}} y_{\un{j}}$ is a monomial for some $(\un{r},\un{s})\in \Omega_m$. Since  $g^{-1} f = (-1)^{|\un{r}|} f$, we have that $|\un{r}|$ is even and the required statement follows.

\item[4.]  Let $\algA=\bfB_2(\al)$. Then part 3 of Proposition~\ref{prop_aut1} implies that $f\in\FF[y_1,\ldots, y_m]$.

\item[5.]  Let $\algA=\bfB_3$. Then part 1 of Proposition~\ref{prop_aut1} implies that $f\in\FF[x_1,\ldots, x_m]$.

\item[6.] Let $\bfD_1(\al,2\al-1)$ for $(\al,2\al-1)\in \calU$. We have $g=\matr{1}{1}{0}{-1}$ and $|\Aut(\algA)|=2$. Thus the algebra $I_m(\algA)$ is generated by $\NN^m$-homogeneous invariants of degree $\leqslant2$ by Theorem~\ref{theo_Noether}. Hence we can assume that $\deg(f)\leqslant 2$. 

Assume $\deg(f)=1$. Then $f=\al x_1 + \be y_1$ for some $\al,\be\in\FF$ and $g^{-1}f = \al (x_1+y_1) - \be y_1=f$. Thus $\al=2\be$ and $f=\be (2x_1 + y_1)$.   

Assume $\deg(f)=2$. Since $y_1y_2\in I_m(\algA)$, we can assume that $f= \al x_1 x_2 +\be_1 x_1 y_2 + \be_2 y_1 x_2 $ for some $\al,\be_1,\be_2\in\FF$. Then
$$g^{-1}f=  \al (x_1+y_1) (x_2+y_2) -\be_1 (x_1+y_1) y_2 - \be_2 y_1 (x_2+y_2)  = f.$$%
Considering the coefficients of $x_1 y_2$, $y_1x_2$, $y_1y_2$, respectively, we obtain that $\al-\be_1 = \be_1$, $\al-\be_2 = \be_2$, $\al-\be_1 - \be_2=0$. Thus 
$$f=\be_1 (2x_1x_2 + x_1 y_2 + y_1 x_2) = \frac{\be_1}{2}\bigg((2x_1 + y_1)(2x_2 + y_2) - y_1y_2\bigg).$$
The claim is proven.

\item[7.] Let $\algA=\bfD_2(\al,\be)$ for $(\al,\be)\not\in\calT$. Then part 2 of Proposition~\ref{prop_aut1} implies that $f\in\FF[x_1,\ldots, x_m]$.

\item[8.] Let $\algA=\bfE_1(\al,\be,\be,\al)$ for $(\al,\be,\be,\al)\in\calV$, $(\al,\be)\neq (-1,-1)$ or $\algA=\bfE_3(\al,\al,-1)$.  We have $g=\matr{0}{1}{1}{0}$ and $\Aut(\algA)$ acts on $\FF[\algA^m]$ as the symmetric group $\Sym_2$ by permutation of $x_i$ and $y_i$, i.e., $I_m(\algA)=\FF[\algA^m]^{\Sym_2}$. Since the generators for $\FF[\algA^m]^{\Sym_2}$ are well known (as an example, see Remark~\ref{remark_S2}), the required statement is proven.

\item[9.] Let $\algA=\bfE_1(-1,-1,-1,-1)$. Then see Lemma~\ref{lemma_E1}.

\item[10.] Let $\algA=\bfE_5(\al)$. Then $g=\matr{a}{c}{1-a}{1-c}$ for any $a,c\in\FF$ with $a\neq c$. For $z_i:=x_i+y_i$, where $1\leqslant i\leqslant m$, we have  $\FF[\algA^m]=\FF[z_1,x_1,\ldots,z_m,x_m]$. Since $g^{-1} z_r = z_r$ and $g^{-1} x_r = (a-c) x_r + c z_r$, then $I_m(\algA)=\FF[z_1,x_1,\ldots,z_m,x_m]^H$, where the group $H$ consists of matrices $\matr{1}{0}{a}{b}$ for all $a\in\FF$, $b\in\FF^{\times}$.   By part 1 of Proposition~\ref{prop_aut1} we have that $f\in\FF[z_1,\ldots, z_m]$.

\item[11.] If $\algA=\bfN$, then obviously $I_m(\bfN)=\FF$.
\end{enumerate}

For each of the above-considered cases the minimality of $S$ follows from the fact that $S$ is multidegree-irreducible (see Remark~\ref{remark_min}).
\end{proof}

%-------------------------------------------------------
\subsection{Trace invariants}\label{section_trace_inv}

Writing down $M_{ij}$ as $M_{ij}=(M_{ij1},M_{ij2})$, we can see that formulas~(\ref{eq_trL_trR}) imply that for all $1\leqslant r\leqslant m$ we have:
\begin{eq}\label{eq_trL3}
\tr(\chi_r\chi_0)= (\al_1 + \al_2) x_r + (\be_1+\be_2) y_r 
\;\text{ for }\;
M=\matr{(\al_1,\ast)}{(\ast,\al_2)}{(\be_1,\ast)}{(\ast,\be_2)},
\end{eq}
\begin{eq}\label{eq_trR3}
\tr(\chi_0\chi_r)= (\al_1 + \al_2) x_r + (\be_1+\be_2) y_r 
\;\text{ for }\;
M=\matr{(\al_1,\ast)}{(\be_1,\ast)}{(\ast,\al_2)}{(\ast,\be_2)}.
\end{eq}

\medskip
%-------------------------------------------------------
\begin{prop}\label{prop_tr_dim2}
For every $\al,\be,\ga,\de\in\FF$, $m>0$ and $1\leqslant r,s\leqslant m$ we have the following trace formulas:
\begin{longtable}{l|lll}
\hline
$\bfA_1(\al)$ &  $\tr(\chi_r \chi_0) =(1+\al) x_r $ &  $\tr(\chi_0 \chi_r)= (2-\al) x_r$    \\  
&   $\tr(\chi_r(\chi_s \chi_0)) = (1+\al^2) x_r x_s $ & $\tr((\chi_s \chi_0) \chi_r) =(1+\al-\al^2) x_r x_s$\\
&   $\tr(\chi_r(\chi_0 \chi_s)) = (1+\al-\al^2) x_r x_s$  & $\tr((\chi_0 \chi_s) \chi_r) =(2-2 \al+\al^2) x_r x_s$ \\
&   $\tr((\chi_r \chi_s)\chi_0) = (1+\al) x_r x_s$  & 
$\tr(\chi_0 (\chi_r \chi_s)) =(2 -\al) x_r x_s$ \\
\hline
$\bfA_2$ &  $\tr(\chi_r \chi_0) = x_r $ &  $\tr(\chi_0 \chi_r)= -x_r$    \\  
&   $\tr(\chi_r(\chi_s \chi_0)) = x_r x_s$ & $\tr((\chi_s \chi_0) \chi_r) = -x_r x_s$\\
&   $\tr(\chi_r(\chi_0 \chi_s)) =  -x_r x_s$& $\tr((\chi_0 \chi_s) \chi_r) = x_r x_s$ \\
&   $\tr((\chi_r \chi_s)\chi_0) = 0$  & 
$\tr(\chi_0 (\chi_r \chi_s)) = 0$ \\
\hline
$\bfA_3$ &  $\tr(\chi_r \chi_0) =0 $ &  $\tr(\chi_0 \chi_r)=0 $    \\  
\hline
%$\bfA_4(\al)$,\;\;\;\;\;\;\;\;\; $\al\in\FF_{\geqslant 0}$ &  $\tr(\chi_r \chi_0) = 2 \al x_r-%y_r$ &  $\tr(\chi_0 \chi_r)= \al x_r+y_r$    \\  
%\hline
%$\bfA_4(0)$  &  $\tr(\chi_r \chi_0) = -y_r$ &  $\tr(\chi_0 \chi_r)= y_r$    \\ 
%&   $\tr(\chi_s(\chi_r \chi_0)) = 2 x_r x_s + y_r y_s$ & $\tr((\chi_r \chi_0) \chi_s) = -y_r %y_s$\\
%&   $\tr(\chi_s(\chi_0 \chi_r)) =  -y_r y_s$& $\tr((\chi_0 \chi_r) \chi_s) = -2 x_r x_s + y_r %y_s$ \\
$\bfA_4(0)$  &  $\tr(\chi_r \chi_0) = -y_r$ &  $\tr(\chi_0 \chi_r)= y_r$    \\ 
&   $\tr(\chi_r(\chi_s \chi_0)) = 2 x_r x_s + y_r y_s$ & \\
\hline

%$\bfB_1(\al)$ &  $\tr(\chi_r \chi_0) = x_r+\al y_r$ &  $\tr(\chi_0 \chi_r)=- x_r+(1-\al) y_r $    \\  
%\hline
$\bfB_2(\al)$ &  $\tr(\chi_r \chi_0) = \al y_r$ &  $\tr(\chi_0 \chi_r)=(1-\al) y_r $    \\  
\hline
$\bfB_3$ &  $\tr(\chi_r \chi_0) = x_r$ &  $\tr(\chi_0 \chi_r)=-x_r$    \\ 
\hline

%$\bfC(\al,\be)$,\;\;\;\;\;\;\; $\be\in \FF_{\geqslant 0}$ &  $\tr(\chi_r \chi_0) = \be x_r+(1+\al) y_r$ &  $\tr(\chi_0 \chi_r)= -\be x_r+(2-\al) y_r$    \\  
%\hline
%$\bfC(\al,0)$  &  $\tr(\chi_r \chi_0) = (1+\al) y_r$ &  $\tr(\chi_0 \chi_r)= (2-\al) y_r$    \\  
%&   $\tr(\chi_s(\chi_r \chi_0)) = (2-2\al)x_r x_s + (1+\al^2)y_r y_s$ & 
%$\tr((\chi_r \chi_0) \chi_s) = x_r x_s + (1+(1-\al)\al)y_r y_s$\\
%&  $\tr(\chi_s(\chi_0 \chi_r)) =  x_r x_s + (1+(1-\al)\al)y_r y_s$& 
%$\tr((\chi_0 \chi_r) \chi_s) = 2\al\, x_r x_s + (1+(1-\al)^2)y_r y_s$ \\
%\hline
$\bfC(\al,0)$  &  $\tr(\chi_r \chi_0) = (1+\al) y_r$ &  $\tr(\chi_0 \chi_r)= (2-\al) y_r$    \\  
&  $\tr(\chi_r(\chi_0 \chi_s)) =  x_r x_s + (1+(1-\al)\al)y_r y_s$& \\
\hline

%$\bfD_1(\al,\be)$,\;\;\;\;\; $(\al,\be)\in \calU$ &  $\tr(\chi_r \chi_0) = (1+\be) x_r + \al y_r$ &  $\tr(\chi_0 \chi_r)= (1-\be) x_r+(1-\al) y_r$    \\ % \hline
$\bfD_1(\al,2\al-1)$,\;\;\;\;\;  &  
$\tr(\chi_r \chi_0) = \al(2 x_r + y_r)$ &  
$\tr(\chi_0 \chi_r) = (1-\al)(2 x_r + y_r)$    \\
$(\al,2\al-1)\in \calU$ &\multicolumn{2}{l}{$\tr((\chi_0 \chi_s) \chi_r) =
\frac{1}{2}(1-2 \al+2 \al^2) (2 x_r+y_r)(2 x_s+y_s) +(\frac{1}{2}-\al) y_r y_s$}\\
&   \multicolumn{2}{l}{ $\tr((\chi_r \chi_s) \chi_0) =  \frac{\al}{2}(2x_r+y_r)(2x_s+y_s) - \frac{\al}{2}y_r y_s$}& \\
\hline
%$\bfD_1(\frac{1}{2},0)$ &  $\tr(\chi_r \chi_0) = \frac{1}{2} (2x_r+y_r) $ &  $\tr(\chi_0 \chi_r)= \frac{1}{2} (2x_r+y_r)$    \\  
%&   $\tr(\chi_r(\chi_s \chi_0)) = \frac{1}{4} (2x_r+y_r)(2x_s+y_s)$ & 
%$\tr((\chi_s \chi_0) \chi_r) = \frac{1}{4} (2x_r+y_r)(2x_s+y_s)$\\
%&  $\tr(\chi_r(\chi_0 \chi_s)) =  \frac{1}{4} (2x_r+y_r)(2x_s+y_s)$& 
%$\tr((\chi_0 \chi_s) \chi_r) = \frac{1}{4} (2x_r+y_r)(2x_s+y_s)$ \\
%&   \multicolumn{2}{l}{ $\tr((\chi_r \chi_s) \chi_0) =  \frac{1}{4}(2x_r+y_r)(2x_s+y_s) - \frac{1}{4}y_r y_s$}& \\
%\hline
%$\bfD_2(\al,\be)$,\;\;\;\;\; $(\al,\be)\not\in\calT$  &  $\tr(\chi_r \chi_0) = (1+\al) x_r$ &  %$\tr(\chi_0 \chi_r)=(1+\be) x_r $    \\ 
%&   $\tr(\chi_s(\chi_r \chi_0)) = (1+\al^2)x_r x_s$ & $\tr((\chi_r \chi_0) \chi_s) = %(1+\al\be)x_r x_s$\\
%&   $\tr(\chi_s(\chi_0 \chi_r)) =  (1+\al\be)x_r x_s$& $\tr((\chi_0 \chi_r) \chi_s) = (1+\be^2)x_r x_s$ \\
%\hline
$\bfD_2(\al,\be)$, &  $\tr(\chi_r \chi_0) = (1+\al) x_r$ &  $\tr(\chi_0 \chi_r)=(1+\be) x_r $    \\ 
$(\al,\be)\not\in\calT$&& \\
\hline
$\bfD_3(\al,\be)$,   & $\tr(\chi_r \chi_0) =   (1+\al) x_r - y_r $ &  $\tr(\chi_0 \chi_r)= (1+\be) x_r + y_r$    \\  
$(\al,\be)\not\in\calT$ && \\
\hline

%$\bfE_1(\al,\be,\ga,\de)$, & $:$ & $\tr(\chi_r \chi_0) = (1+\be) x_r+(1+\ga) y_r$ &  $\tr(\chi_0 \chi_r)= (1+\de) x_r+(1+\al) y_r$    \\  
%$(\al,\be,\ga,\de)\in\calV$ &&&\\
%\hline
$\bfE_1(\al,\be,\be,\al)$, &  $\tr(\chi_r \chi_0) = (1+\be)(x_r+y_r)$ &  $\tr(\chi_0 \chi_r)= (1+\al)(x_r+ y_r)$    \\  
$(\al,\be,\ga,\de)\in\calV$,  & 
\multicolumn{2}{l}{$\tr(\chi_r(\chi_s \chi_0))  = (1+\be^2) (x_r+y_r)(x_s+y_s) +
(\al^2-\be^2+2 \be - 1) (x_r y_s + y_r x_s)$}  \\
$(\al,\be)\neq (-1,-1)$ &\multicolumn{2}{l}{$\tr((\chi_0 \chi_s) \chi_r) = (1+\al^2) (x_r+y_r)(x_s+y_s) +
(\be^2-\al^2+2 \al - 1) (x_r y_s + y_r x_s)$}\\
\hline
$\bfE_1(-1,-1,-1,-1)$ &  $\tr(\chi_r \chi_0) = 0$ &  $\tr(\chi_0 \chi_r)= 0$    \\  
 &   $\tr(\chi_r(\chi_s \chi_0)) = H_{rs}$ & \\
%&   $\tr(\chi_r(\chi_0 \chi_s)) =  H_{rs}$& $\tr((\chi_0 \chi_s) \chi_r) =H_{rs}$ \\
& $\tr(\chi_r((\chi_s \chi_t) \chi_0)) = T_{rst}$&\\
\hline
%\addlinespace \hline\addlinespace 

%$\bfE_2(\al,\be,\ga)$, \;  $(\be,\ga)\not\in\calT$ & $:$ & $\tr(\chi_r \chi_0) =(1+\be) x_r + (1+\al) y_r $ &  $\tr(\chi_0 \chi_r)= (1+\ga) x_r+(2-\al) y_r $    \\ 
%\hline
%$\bfE_3(\al,\be,\ga)$, \, $\ga\in\FF^{\times}_{>1}$ & $:$ & $\tr(\chi_r \chi_0) = (1+\be/\ga) x_r+(1+\al \ga) y_r$ &  $\tr(\chi_0 \chi_r)= (1+(1-\be)/\ga) x_r+ $    \\ 
%&&& \qquad\qquad $+ \, (1+(1-\al)\ga) y_r$ \\
%\hline
$\bfE_3(\al,\al,-1)$ &  $\tr(\chi_r \chi_0) = (1-\al)(x_r+y_r)$ &  
$\tr(\chi_0 \chi_r)=\al(x_r+y_r)$    \\ 
&\multicolumn{2}{l}{$\tr(\chi_r(\chi_s \chi_0))  = 
(1+\al^2) (x_r+y_r)(x_s+y_s) - 4\al (x_r y_s + y_r x_s)$}  \\
&\multicolumn{2}{l}{$\tr((\chi_0 \chi_s) \chi_r)  = 
(\al^2- 2\al + 2) (x_r+y_r)(x_s+y_s) + 4(\al-1)(x_r y_s + y_r x_s)$}  \\
\hline
%$\bfE_4$ & $:$ & $\tr(\chi_r \chi_0) = 2 x_r+y_r$ &  $\tr(\chi_0 \chi_r)=x_r+2 y_r$    \\  
%\hline
$\bfE_5(\al)$ &  $\tr(\chi_r \chi_0) =(1+\al)(x_r+y_r)$ &  $\tr(\chi_0 \chi_r)= (2-\al)(x_r+ y_r)$    \\  
\hline
$\bfN$ &  every operator trace is zero    \\  
\hline
\end{longtable}

Here, the polynomials $H_{rs}$ and $T_{rst}$ were defined in Lemma~\ref{lemma_E1}.
\end{prop}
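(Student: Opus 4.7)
The plan is to reduce every formula in the table to a direct matrix computation via Proposition~\ref{prop_trace}. For a fixed algebra $\algA$ from Table~1 with tableau of multiplication $M = (M_{ij})_{1 \leqslant i,j \leqslant 2}$, I would first read off the structure constants $M_{ijl}$ from Section~\ref{section_simple2dim}, and then assemble the four $2\times 2$ matrices $\ML{1}, \ML{2}, \MR{1}, \MR{2}$ according to the definitions preceding Proposition~\ref{prop_trace}. Once these matrices are in hand, every listed operator trace becomes a mechanical evaluation: the degree-one entries $\tr(\chi_r \chi_0)$ and $\tr(\chi_0 \chi_r)$ follow immediately from formulas~\eqref{eq_trL3} and \eqref{eq_trR3}, while the higher-degree entries such as $\tr(\chi_r(\chi_s\chi_0))$, $\tr((\chi_0\chi_s)\chi_r)$ and $\tr((\chi_r\chi_s)\chi_0)$ are obtained from the general formula
$$
\tr(h) = \sum_{1 \leqslant i_1, \ldots, i_k \leqslant n} \tr\!\Big(M_{P^1}^{(i_1)} \cdots M_{P^k}^{(i_k)}\Big)\, x_{r_1,i_1}\cdots x_{r_k,i_k}
$$
with $n=2$. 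For the cubic entries involving a product $\chi_s\chi_t$ inside a left or right multiplication, I would appeal instead to Proposition~\ref{prop_trace_double} together with the double-index matrices $\MdL{i}{i'}$ and $\MdR{i}{i'}$.

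Concretely, the approach for each row of the table is: (i) write the four matrices $\ML{i}$ and $\MR{i}$ as explicit $2\times 2$ matrices with entries in $\FF$; (ii) for a composition $P^1_{\chi_{r_1}}\circ\cdots\circ P^k_{\chi_{r_k}}$, compute all $2^k$ products of the chosen matrices and collect their traces to obtain the coefficient polynomial in the variables $x_r,y_r$; (iii) simplify using the symmetries built into the sum (e.g.\ the product of $x_{r,i}$ and $x_{s,j}$ is symmetric under swapping the two middle matrices when $i=j$). For the algebra $\bfE_1(-1,-1,-1,-1)$, after computing $\tr(\chi_r(\chi_s\chi_0))$ and recognising the resulting polynomial as the invariant $H_{rs}$ defined in Lemma~\ref{lemma_E1}, the identification of $\tr(\chi_r((\chi_s\chi_t)\chi_0))$ with $T_{rst}$ would be verified by a parallel cubic computation based on Proposition~\ref{prop_trace_double}.

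The main obstacle is not conceptual but bookkeeping: each algebra has its own multiplication table and the coefficients $(1+\al^2)$, $(\al^2 - \be^2 + 2\be - 1)$, etc., arise only after carefully summing sixteen scalars for each quadratic trace and sixty-four for each cubic trace. To keep errors under control I would exploit two structural shortcuts. First, whenever the algebra has a diagonal automorphism scaling $e_1,e_2$ by different characters, the invariance of each $\tr(h)$ (Lemma~\ref{lemma_Tr_inv}) forces many monomials in $x_r,y_r$ to vanish, which cuts down the admissible terms to a small template determined by the multidegree and the automorphism weights. Second, several of the listed algebras (for instance $\bfE_1(\al,\be,\be,\al)$ and $\bfE_3(\al,\al,-1)$) share the symmetry $g = J$, forcing the answer to lie in $\FF[\algA^m]^{\Sym_2}$; Remark~\ref{remark_S2} then restricts the possible polynomial shape to an $\FF$-linear combination of $(x_r+y_r)(x_s+y_s)$ and $(x_ry_s+y_rx_s)$, and only the two scalar coefficients have to be matched. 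With these reductions in place, each cell of the table becomes a short verification, and the algebras $\bfA_3$, $\bfN$, and the degenerate part of $\bfE_1(-1,-1,-1,-1)$ are handled by noting that every row of the relevant $\ML{i}$ or $\MR{i}$ sums to zero along the diagonal.
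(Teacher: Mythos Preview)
Your proposal is correct and follows exactly the paper's own approach: the paper's proof simply states that the formulas are obtained by taking the multiplication tables from Table~1 and applying equalities~(\ref{eq_trL3}), (\ref{eq_trR3}) together with Propositions~\ref{prop_trace} and~\ref{prop_trace_double}. Your additional remarks about exploiting automorphism symmetries to cut down the bookkeeping are reasonable practical shortcuts, but do not change the underlying method.
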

\begin{proof} We take the tableaux of multiplication for algebras from Table 1. To calculate the traces we apply equalities (\ref{eq_trL3}), (\ref{eq_trR3}) and Propositions~\ref{prop_trace} and~\ref{prop_trace_double}.
\end{proof}

%-------------------------------------------------------
\begin{theo}\label{theo_API}
Assume that the characteristic of $\FF$ is zero, $\algA$ is a two-dimensional algebra with a non-trivial automorphism group and $m>0$. Then the Artin--Procesi--Iltyakov Equality holds for $\algA^m$ if and only if $\algA$ is not isomorphic to any of the following algebras:
\begin{enumerate}
\item[$\bullet$] $\bfA_1(\al)$ with  $m>1$, $\al\in\FF$;

\item[$\bullet$] $\bfA_2$ with $m>1$;

%\item[$\bullet$] $\bfD_1(\frac{1}{2},0)$;

\item[$\bullet$] $\bfD_2(-1,-1)$.
\end{enumerate}
\end{theo}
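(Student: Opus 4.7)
The plan is a case-by-case inspection of the algebras $\algA$ with non-trivial $\Aut(\algA)$ catalogued in Table 1, comparing the minimal generating set of $I_m(\algA)$ from Theorem~\ref{theo_gens} with the operator-trace formulas from Proposition~\ref{prop_tr_dim2}. For each such $\algA$ one must either express every generator of $I_m(\algA)$ as a polynomial in operator traces, or exhibit a generator that provably lies outside $\Tr(\algA)_m$.

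For the positive direction (API holds whenever $\algA$ is outside the exceptional list), the verification is a short linear-algebra exercise in each case, and the parameter restrictions in Table 1 and Corollary~\ref{cor_simple2} are precisely what ensures that the relevant coefficients in the trace formulas do not all vanish simultaneously. For instance, for $\algA=\bfE_1(\al,\be,\be,\al)$ (which necessarily satisfies $\al+\be\neq 1$, since the locus $\al+\be=1$ is the $\bfE_5$ family in the Kaygorodov--Volkov normal form), the condition $(\al,\be)\neq(-1,-1)$ makes $\tr(\chi_r\chi_0)$ or $\tr(\chi_0\chi_r)$ a non-zero multiple of $x_r+y_r$, while the coefficients of $x_ry_s+y_rx_s$ in the two cubic traces factor as $(\al-\be+1)(\al+\be-1)$ and $(\be-\al+1)(\al+\be-1)$, sharing the non-zero factor $\al+\be-1$ and not both vanishing, so $x_ry_s+y_rx_s$ (and, on the diagonal, $x_ry_r$) can be extracted. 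The remaining cases --- $\bfA_3$, $\bfA_4(0)$, $\bfB_2(\al)$, $\bfB_3$, $\bfC(\al,0)$, $\bfD_1(\al,2\al-1)$, $\bfD_2(\al,\be)$ with $(\al,\be)\neq(-1,-1)$, $\bfE_3(\al,\al,-1)$, $\bfE_5(\al)$, $\bfN$, together with $\bfA_1(\al)$ and $\bfA_2$ for $m=1$ --- are handled analogously, and $\bfE_1(-1,-1,-1,-1)$ is immediate from Lemma~\ref{lemma_E1} combined with the identities $\tr(\chi_r(\chi_s\chi_0))=H_{rs}$ and $\tr(\chi_r((\chi_s\chi_t)\chi_0))=T_{rst}$ of Proposition~\ref{prop_tr_dim2}.

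For the negative direction, I would treat the three exceptional families by structural arguments bounding \emph{all} operator traces, not merely the explicit ones of Proposition~\ref{prop_tr_dim2}. For $\bfA_1(\al)$ and $\bfA_2$ with $m\geq 2$, a direct check shows that $L_a$ and $R_a$ are lower-triangular in the basis $\{e_1,e_2\}$ and their diagonal entries depend only on the first coordinate $\al_1=x_r$ of $a$; moreover, the $e_1$-coefficient $(ab)_1$ depends only on $\al_1,\be_1$ (it equals $\al_1\be_1$ for $\bfA_1(\al)$ and $0$ for $\bfA_2$), so an easy induction on word length gives $(h_q(\un{a}))_1\in\FF[x_1,\ldots,x_m]$ for every $h_q\in\FF\LA\chi_1,\ldots,\chi_m\RA$. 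Consequently $\Tr(\algA)_m\subseteq\FF[x_1,\ldots,x_m]$, which does not contain the generator $x_ry_s-y_rx_s$ (existing precisely when $m\geq 2$), so API fails.

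For $\bfD_2(-1,-1)$ the multiplication is commutative and a direct computation gives $L_a=R_a=\al_1 D+\al_2 N$ with $D=\smatr{1}{0}{0}{-1}$ and $N=\smatr{0}{0}{-1}{0}$; the relations $D^2=I$, $N^2=0$, $DN=-N$, $ND=N$ imply that any product $A_1\cdots A_k$ with $A_i\in\{D,N\}$ has trace zero except when every $A_i=D$ and $k$ is even, in which case the trace is $2$. Expanding $L_{h_q(\un{a})}=u_qD+v_qN$ where $h_q(\un{a})=u_qe_1+v_qe_2$ therefore gives $\tr(h)=2u_1\cdots u_k$ for $k$ even and zero otherwise, with each $u_q$ a polynomial in $x_1,\ldots,x_m$ of degree $\geq 1$. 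Hence $\Tr(\bfD_2(-1,-1))_m$ is contained in $\FF$ plus the ideal of polynomials in $x_1,\ldots,x_m$ of degree $\geq 2$, which does not contain the linear generator $x_r$ of $I_m(\bfD_2(-1,-1))$, so API fails. The main obstacle throughout is precisely the negative direction: one must control the image of the operator-trace map over all $h$, not just the low-degree samples in Proposition~\ref{prop_tr_dim2}, and the three structural observations (lower-triangularity for $\bfA_1(\al)$ and $\bfA_2$, and the $D$-$N$ calculus for $\bfD_2(-1,-1)$) are what make this tractable.
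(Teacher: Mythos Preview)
Your positive direction coincides with the paper's: both run through the list in Theorem~\ref{theo_gens} and match each minimal generator against the explicit trace formulas of Proposition~\ref{prop_tr_dim2}, the parameter restrictions guaranteeing that enough coefficients are non-zero. Your treatment of $\bfE_1(\al,\be,\be,\al)$ via the factorizations $(\al-\be+1)(\al+\be-1)$ and $(\be-\al+1)(\al+\be-1)$ is in fact a bit more explicit than the paper's, which simply observes that the sum $\tr(\chi_r(\chi_s\chi_0))+\tr((\chi_0\chi_s)\chi_r)$ has coefficient $2(\al+\be-1)$ in front of $x_ry_s+y_rx_s$.

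Your negative direction is correct but genuinely different from the paper's. The paper exploits the $\NN^m$-grading of $\Tr(\algA)_m$: since this algebra is generated by $\NN^m$-homogeneous operator traces, its degree-$1$ component is exactly the $\FF$-span of $\tr(\chi_r\chi_0)$ and $\tr(\chi_0\chi_r)$, and its multidegree-$(1,1)$ component is the span of the finitely many products and cubic-word traces listed in the proof (the paper's observations (a) and (b)). One then simply reads off from Proposition~\ref{prop_tr_dim2} that for $\bfD_2(-1,-1)$ both linear traces vanish (so $x_1\notin\Tr(\algA)_m$), and that for $\bfA_1(\al)$, $\bfA_2$ every listed quadratic trace lies in $\FF[x_1,\dots,x_m]$ (so $x_1y_2-y_1x_2\notin\Tr(\algA)_m$). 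Your approach instead proves containments $\Tr(\algA)_m\subseteq\FF[x_1,\dots,x_m]$ (respectively $\Tr(\algA)_m\subseteq\FF\oplus\FF[x_1,\dots,x_m]_{\geqslant 2}$) valid in all degrees, via the lower-triangular structure of $L_a,R_a$ and the $D$--$N$ calculus. This is a stronger conclusion and gives a structural explanation for the failure of the equality, at the cost of a slightly longer argument; the paper's route is more economical because it only needs to rule out membership of a single homogeneous generator.
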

\begin{proof}  Without loss of generality, we can assume that $\algA$ is an algebra from one of the items of Theorem~\ref{theo_gens}. Denote by $S$ the generating set for $I_m(\algA)$ from Theorem~\ref{theo_gens}.  Hence, the Artin--Procesi--Iltyakov Equality holds for $\algA^m$ if and only if 
\begin{eq}\label{eq_cond}
S\subset \Tr(\algA)_m.
\end{eq}%
\noindent{}For every $d>0$, denote by $S_d$ all elements from $S$ of degree $d$. Similarly,  
for every multidegree $\De$, denote by $S_{\De}$ all elements from $S$ of multidegree $\De$.  

As above, $\beta=\be(I_m(\algA))$ is the maximal degree of elements from $S$. Note that $\be\leqslant 2$, unless $\algA=\bfE_1(-1,-1,-1,-1)$. Obviously, if condition~(\ref{eq_cond}) holds for $m=\be$, then  condition~(\ref{eq_cond}) holds for every $m\geqslant\be$. Note that 

\begin{enumerate}
\item[(a)] $S_1\subset\Tr(\algA)_m$ if and only if $S_{(1)}$ lies in the $\FF$-span of $\tr(\chi_1 \chi_0)$, $\tr(\chi_0 \chi_1)$.

\item[(b)] $S_2\subset\Tr(\algA)_m$ if and only if  
\begin{enumerate}
\item[$\bullet$] $S_{(2)}$ lies in the $\FF$-span of $\tr(\chi_1 \chi_0)^2$, $\tr(\chi_0 \chi_1)^2$, $\tr(\chi_1 \chi_0)\tr(\chi_0 \chi_1)$,  $\tr(\chi_1(\chi_1 \chi_0))$, $\tr(\chi_1(\chi_0 \chi_1))$,  $\tr((\chi_1 \chi_0) \chi_1)$, $\tr((\chi_0 \chi_1) \chi_1)$, $\tr((\chi_1 \chi_1) \chi_0)$, $\tr( \chi_0(\chi_1 \chi_1))$;

\item[$\bullet$] $S_{(11)}$ lies in the $\FF$-span of 
$\tr(\chi_1 \chi_0)\tr(\chi_2 \chi_0)$, 
$\tr(\chi_0 \chi_1)\tr(\chi_0 \chi_2)$,  
$\tr(\chi_r \chi_0)\tr(\chi_0 \chi_s)$,   
$\tr(\chi_r(\chi_s \chi_0))$,
$\tr(\chi_r(\chi_0 \chi_s))$, 
$\tr((\chi_s \chi_0) \chi_r)$, 
$\tr((\chi_0 \chi_s) \chi_r)$,
 $\tr((\chi_s \chi_r) \chi_0)$, 
 $\tr( \chi_0(\chi_r \chi_s))$ for every $r,s$ with $\{r,s\}=\{1,2\}$, in case $m>1$.
\end{enumerate}
\end{enumerate}

In what follows, we will use Proposition~\ref{prop_tr_dim2} and the above observations without reference to them. Since $\Aut(\algA)$ is not trivial, one of the following cases holds.
\begin{enumerate}
\item[1.] Let $\algA$ be $\bfA_1(\al)$ or $\bfA_2$. Then $S_{(1)}=\{x_1\}$ is a subset of $\Tr(\algA)_m$. Thus in case $m=1$ we have that condition~(\ref{eq_cond}) holds. On the other hand, in case $m>1$ we have that $x_1y_2-y_1x_2\not\in \Tr(\algA)_m$.

\item[2.] Let $\algA=\algA_3$. Then $\{1\}=S\subset \Tr(\algA)_m=\FF$.

\item[3.] Let $\algA$ be one of the following algebras: $\bfA_4(0)$, $\bfB_2(\al)$, $\bfB_3$, $\bfE_1(-1,-1,-1,-1)$, $\bfE_5(\al)$, where $\al\in\FF$. Then  condition~(\ref{eq_cond}) holds. 

\item[4.] Let $\algA=\bfC(\al,0)$ for $\al\in\FF$. Then $S_{(1)}=\{y_1\}$ and $S_{(2)}=\{x_1^2\}$ are subsets of $\Tr(\algA)_m$. Moreover, $S_{(11)}=\{x_1 x_2\}$ is a subset of $\Tr(\algA)_m$ in case $m>1$.

\item[5.] Let $\algA=\bfD_1(\al,2\al-1)$ for $(\al,2\al-1)\in \calU$. Then
$S_{(1)}=\{2x_1 + y_1\}$ is a subset of $\Tr(\algA)_m$. Considering 
$\tr((\chi_0 \chi_s) \chi_r)$ and $\tr((\chi_r \chi_s) \chi_0)$ we obtain that $y_r y_s\in \Tr(\algA)_m$ for all $1\leqslant r\leqslant s\leqslant m$, i.e.,  condition~(\ref{eq_cond}) holds.

\item[6.] Let $\algA=\bfD_2(\al,\be)$ for $(\al,\be)\not\in\calT$. In case $(\al,\be)\neq (-1,-1)$, the set $S_{(1)}=\{x_1\}$  is a subset of $\Tr(\algA)_m$. On the other hand, in case $\al=\be=-1$ the set  $S_{(1)}=\{x_1\}$ does not lie in $\Tr(\algA)_m$.

\item[7.] Let $\algA=\bfE_1(\al,\be,\be,\al)$ for $(\al,\be,\ga,\de)\in\calV$, $(\al,\be)\neq (-1,-1)$. Then, 
$S_{(1)} = \{x_1+y_1\}$ is a subset of $\Tr(\algA)_m$. Since $\al+\be\neq 1$, we consider $\tr(\chi_r(\chi_s \chi_0)) + \tr((\chi_0 \chi_s) \chi_r)$ to 
obtain that $x_r y_s + y_r x_s\in \Tr(\algA)_m$ for all $1\leqslant r\leqslant s\leqslant m$, i.e.,  condition~(\ref{eq_cond}) holds.

%\item[0.] Let $\algA=\bfE_1(-1,-1,-1,-1)$. Then $\frac{1}{2}H_{111}=x_1^3\in I_m(\algA)$, but $x_1^3\not\in T(\algA)_m$, since $\tr(w)=0$ for for every monomial $w\in\FF\LA \chi_0,\chi_1\RA$ of degree 2 and 4. 

\item[8.] Let $\algA=\bfE_3(\al,\al,-1)$ for $\al\in\FF$.  Hence, $x_r+y_r$ lies in $\Tr(\algA)_m$ and, therefore, $x_ry_s+y_r x_s$ lies in $\Tr(\algA)_m$ for all $1\leqslant r\leqslant s\leqslant m$, i.e.,  condition~(\ref{eq_cond}) holds.
\end{enumerate}
\end{proof}

%-------------------------------------------------------
\begin{cor}\label{cor_main}
Assume that $\algA$ is a two-dimensional simple algebra with a non-trivial automorphism group and $m>0$. Then the Artin--Procesi--Iltyakov Equality holds for $\algA^m$.
\end{cor}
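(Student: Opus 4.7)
The plan is to obtain Corollary~\ref{cor_main} as an immediate consequence of Theorem~\ref{theo_API} combined with the classification of simple two-dimensional algebras with non-trivial automorphism group given in Corollary~\ref{cor_simple2} (which itself is extracted from Theorem~\ref{theo_simple}). Theorem~\ref{theo_API} already provides a complete list of the pairs $(\algA,m)$ for which the Artin--Procesi--Iltyakov Equality can fail among two-dimensional algebras with a non-trivial automorphism group, namely $\bfA_1(\al)$ with $m>1$, $\bfA_2$ with $m>1$, and $\bfD_2(-1,-1)$ with any $m$. So the only thing left to verify is that none of these exceptional algebras is simple.

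First I would reduce to checking a finite list. Since we assume $\algA$ is two-dimensional, simple, and has a non-trivial automorphism group, Corollary~\ref{cor_simple2} tells us that $\algA$ must be isomorphic to one of: $\bfA_4(0)$, $\bfC(\al,0)$, $\bfD_1(\al,2\al-1)$ with $\al\neq \tfrac{1}{2}$, $\bfE_1(\al,\be,\be,\al)$ with $(\al,\be)\neq(-1,-1)$ and $(\al,\be)\neq(0,0)$ and $\al+\be\neq 1$, $\bfE_1(-1,-1,-1,-1)$ (when $\Char\FF\neq 3$), or $\bfE_3(\al,\al,-1)$. I would then compare this list term by term with the list of exceptions in Theorem~\ref{theo_API}.

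Next, I would observe that $\bfA_1(\al)$, $\bfA_2$, and $\bfD_2(\al,\be)$ do not appear in the simple list above. Indeed, Theorem~\ref{theo_simple} explicitly enumerates every simple two-dimensional algebra, and none of those three families occurs there. (For a quick direct check, each of these algebras possesses the one-dimensional ideal exhibited at the end of the proof of Theorem~\ref{theo_simple}: $\FF e_2$ for $\bfA_1(\al)$ and $\bfA_2$, and $\FF e_2$ for $\bfD_2(\al,\be)$ with $(\al,\be)\not\in\calT$.) In particular, the exceptional pair $\bfD_2(-1,-1)$ is not simple, so it is excluded from the hypothesis of the corollary.

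Consequently, whenever $\algA$ satisfies the hypotheses of the corollary, $\algA$ is not isomorphic to any of the three exceptional algebras listed in Theorem~\ref{theo_API}, and therefore the Artin--Procesi--Iltyakov Equality holds for $\algA^m$ for every $m>0$. There is no genuine obstacle here; the whole content of the corollary has been loaded into the earlier classification theorems, and the proof is a one-line cross-reference between Theorem~\ref{theo_API} and Corollary~\ref{cor_simple2} (equivalently, Theorem~\ref{theo_simple}).
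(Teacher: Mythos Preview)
Your proposal is correct and follows exactly the intended route: the paper states Corollary~\ref{cor_main} without proof, treating it as an immediate consequence of Theorem~\ref{theo_API} together with the fact (from Theorem~\ref{theo_simple}) that the exceptional algebras $\bfA_1(\al)$, $\bfA_2$, and $\bfD_2(-1,-1)$ are not simple. Your cross-reference to Corollary~\ref{cor_simple2} and the explicit identification of the one-dimensional ideals simply make this implicit step explicit.
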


%-----------------------------------------------------
\begin{remark}\label{rem_new}
Over a field of characteristic zero, there are infinitely many non-isomorphic 2-dimensional algebras $\algA$ with infinite groups of automorphisms (namely, algebras $\bfA_1(\al)$ and $\bfA_2$ for $\al\in\FF$) such that  Artin-Procesi--Iltyakov Equality holds for $\algA$, but  Artin-Procesi--Iltyakov Equality does not hold for $\algA^m$ for all $m>1$.
\end{remark}

%-----------------------------------------------------
\begin{lemma}\label{lemma_non_API}
Over a field of characteristic zero, the algebra $\algA=\bfD_3(\al,-\al-2)$ with $\al\in\FF$ is a simple algebra with the trivial automorphism group such that Artin-Procesi--Iltyakov Equality does not hold for $\algA^m$ for all $m>0$.
\end{lemma}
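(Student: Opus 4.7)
The plan has three ingredients: simplicity, triviality of the automorphism group, and failure of the Artin--Procesi--Iltyakov Equality. For the first two I would invoke existing results directly. Since $\Char\FF=0$, we have $\al+(-\al-2)=-2\neq 1$, hence $(\al,-\al-2)\notin\calT$ and $\algA=\bfD_3(\al,-\al-2)$ is a valid entry of Table~1. The pair $(\al,-\al-2)$ is never $(0,0)$, so item~(5) of Theorem~\ref{theo_simple} gives simplicity; Table~1 lists no non-trivial automorphism for $\bfD_3(\al,\be)$, so $\Aut(\algA)=\{{\rm Id}\}$, and Remark~\ref{remark_trivial} yields $I_m(\algA)=\FF[\algA^m]$. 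The task then reduces to showing $\Tr(\algA)_m\subsetneq\FF[\algA^m]$.

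To locate an invariant outside $\Tr(\algA)_m$, I would work in the multidegree $\De\in\NN^m$ with $\De_r=1$ and $\De_s=0$ for $s\neq r$, for some fixed $r\in\{1,\ldots,m\}$; the $\De$-component of $\FF[\algA^m]$ is the two-dimensional space $\FF x_r+\FF y_r$. By Proposition~\ref{prop_tr_dim2} the operator traces of multidegree $\De$ read
\begin{equation*}
\tr(\chi_r\chi_0)=(1+\al)x_r-y_r,\qquad \tr(\chi_0\chi_r)=(1+\be)x_r+y_r,
\end{equation*}
and substituting $\be=-\al-2$ gives $\tr(\chi_0\chi_r)=-\tr(\chi_r\chi_0)$, so the two traces together span only the one-dimensional line $\FF\cdot\bigl((1+\al)x_r-y_r\bigr)$.

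It remains to verify that no other element of $\Tr(\algA)_m$ lands in multidegree $\De$. By linearity, every operator trace $\tr(h)$ of multidegree $\De$ is a linear combination of traces of monomials $h\in\FF\LA\chi_0,\ldots,\chi_m\RA$ of degree one in $\chi_0$, degree one in $\chi_r$, and degree zero in all other $\chi_s$; the only such monomials are $\chi_r\chi_0$ and $\chi_0\chi_r$. Because $\Tr(\algA)_m$ is generated by $1$ together with $\NN^m$-homogeneous operator traces, and because $\De$ has total $\NN^m$-degree $1$, any element of the $\De$-component is a scalar multiple of a single generator of multidegree $\De$. Hence the $\De$-component of $\Tr(\algA)_m$ equals $\FF\cdot\bigl((1+\al)x_r-y_r\bigr)$, so $x_r\notin\Tr(\algA)_m$ and $\Tr(\algA)_m\subsetneq I_m(\algA)$ for every $m\geqslant 1$. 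The main (mild) obstacle is the bookkeeping in this last step, ruling out that products of lower-multidegree generators could add extra dimensions to the $\De$-graded piece.
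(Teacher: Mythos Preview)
Your proof is correct and follows essentially the same line as the paper's own argument: both verify simplicity and triviality of $\Aut(\algA)$ via Theorem~\ref{theo_simple} and Table~1, then observe (using Proposition~\ref{prop_tr_dim2}) that the degree-one traces $\tr(\chi_r\chi_0)$ and $\tr(\chi_0\chi_r)$ span only a one-dimensional subspace, so $x_r\notin\Tr(\algA)_m$. You spell out more carefully than the paper why the degree-$\De$ component of $\Tr(\algA)_m$ is exhausted by these two traces, which is a useful clarification rather than a departure in method.
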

\begin{proof} By Theorem~\ref{theo_simple} the algebra $\algA$ is simple. Since $\Aut(\algA)$ is trivial, $x_1,y_1$ belong to $I_m(\algA)$ by Remark~\ref{remark_trivial}. On the other hand, the $\FF$-span of $\tr(\chi_1 \chi_0)$, $\tr(\chi_0 \chi_1)$ does not contain $x_1$ and $y_1$ (see Proposition~\ref{prop_tr_dim2}).
\end{proof}

%-------------------------------------------------------
%-------------------------------------------------------
\section{Corollaries}\label{section_cor}

Assume that $\algA$ is an $n$-dimensional algebra and  $\varphi:\algA^2\to\FF$ is a bilinear form over $\algA$. Obviously, we can  consider $\varphi$ as an element of $\FF[\algA^2]$ of multidegree $(1,1)$. The form $\varphi$ is {\it invariant} if $\varphi(g\cdot a,g\cdot b)=\varphi(a,b)$ for all $g\in\Aut(\algA)$ and $a,b\in\algA$, i.e., $\varphi\in I_2(\algA)$.  The form $\varphi$ is called {\it symmetric} if $\varphi(a,b)=\varphi(b,a)$ for all $a,b\in\algA$. Similarly, the form $\varphi$ is called {\it skew-symmetric} if $\varphi(a,b)=-\varphi(b,a)$ for all $a,b\in\algA$. The form $\varphi$ is {\it associative} if $\varphi(ac,b)=\varphi(a,cb)$ for all $a,b,c\in\algA$. As an example, the bilinear form
$$\varphi:M_n^2\to\FF \quad  \text{ defined by } \quad \varphi(A,B)=\tr(AB)$$
for all $A,B\in M_n$ is a symmetric associative invariant nondegenerate bilinear form over $M_n$.

%-------------------------------------------------------
\begin{prop}\label{prop_form}
Assume that the characteristic of $\FF$ is zero and $\algA$ is a two-dimensional algebra with a non-trivial group $\Aut(\algA)$.
\begin{enumerate}
\item[(a)] The algebra $\algA $ has a symmetric invariant nondegenerate bilinear form if and only if $\algA$ is isomorphic to one of the following algebras:
\begin{enumerate}
\item[$\bullet$] $\bfA_4(0)$, 

\item[$\bullet$] $\bfC(\al,0)$ for $\al\in\FF$, 

\item[$\bullet$] $\bfD_1(\al,2\al-1)$ for $(\al,2\al-1)\in \calU$,

\item[$\bullet$]  $\bfE_1(\al,\be,\be,\al)$ for $(\al,\be,\be,\al)\in\calV$, $(\al,\be)\neq (-1,-1)$,

\item[$\bullet$] $\bfE_1(-1,-1,-1,-1)$,

\item[$\bullet$]  $\bfE_3(\al,\al,-1)$ for $\al\in\FF$.
\end{enumerate}

\item[(b)] The algebra $\algA$ has a skew-symmetric invariant nondegenerate bilinear form if and only if $\algA$ is isomorphic to one of the following algebras: 
\begin{enumerate}
\item[$\bullet$] $\bfA_1(\al)$, 

\item[$\bullet$] $\bfA_2$.
\end{enumerate}

\end{enumerate}
\end{prop}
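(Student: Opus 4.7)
The plan is to reduce the problem to linear algebra inside the multidegree-$(1,1)$ component of $I_2(\algA)$ and then carry out a case-by-case check using Theorem~\ref{theo_gens}. A bilinear form $\varphi:\algA^2\to\FF$ is $\Aut(\algA)$-invariant precisely when, written as $\varphi=ax_1x_2+bx_1y_2+cy_1x_2+dy_1y_2$, it lies in $I_2(\algA)_{(1,1)}$; its Gram matrix is $\smatr{a}{b}{c}{d}$, so symmetry corresponds to $b=c$, skew-symmetry to $a=d=0$ and $b=-c$, and nondegeneracy to $ad-bc\neq 0$.

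Part (b) is the easier half. The space of skew-symmetric bilinear forms on $\algA$ is one-dimensional, spanned by $\omega=x_1y_2-y_1x_2$, and $\omega$ is automatically nondegenerate; hence it suffices to decide when $\omega$ is invariant. A direct computation using~(\ref{eq_action2}) gives $g^{-1}\omega=\det(g)\,\omega$ for every $g\in\GL_2$, so $\omega\in I_2(\algA)$ iff every automorphism of $\algA$ has determinant one. Inspecting the explicit descriptions of $\Aut(\algA)$ in Table~1, this holds precisely for the unipotent-automorphism families $\bfA_1(\al)$ and $\bfA_2$; every other family with non-trivial $\Aut$ contains some automorphism with $\det\neq 1$.

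For part (a) I would process the families from Theorem~\ref{theo_gens} one at a time. In each case the multidegree-$(1,1)$ component of $I_2(\algA)$ is spanned by the listed generators of multidegree $(1,1)$ together with products of pairs of generators of complementary multidegrees $(1,0)$ and $(0,1)$, so one can parametrize a general element of $I_2(\algA)_{(1,1)}$ and test symmetry and nondegeneracy. The affirmative cases admit easy witnesses: $\alpha x_1x_2+\beta y_1y_2$ with $\alpha\beta\neq0$ for $\bfA_4(0)$ and $\bfC(\al,0)$; a combination of $(2x_1+y_1)(2x_2+y_2)$ and $y_1y_2$ with both coefficients nonzero for $\bfD_1(\al,2\al-1)$; a suitable combination of $(x_1+y_1)(x_2+y_2)$ and $x_1y_2+y_1x_2$ for $\bfE_1(\al,\be,\be,\al)$ (with $(\al,\be)\neq(-1,-1)$) and for $\bfE_3(\al,\al,-1)$; and the form $H_{12}=2x_1x_2-x_1y_2-y_1x_2+2y_1y_2$, whose discriminant equals $3$, for $\bfE_1(-1,-1,-1,-1)$.

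The core of the proof lies in ruling out the remaining algebras. For $\bfA_3$ and $\bfN$ the component $I_2(\algA)_{(1,1)}$ is zero. For $\bfA_1(\al)$ and $\bfA_2$ it is the span of $x_1x_2$ and the antisymmetric $\omega$, so every symmetric element is a scalar multiple of $x_1x_2$ and hence degenerate. The remaining cases are analogous single-monomial or rank-one degenerations: only $y_1y_2$ for $\bfB_2(\al)$, only $x_1x_2$ for $\bfB_3$ and $\bfD_2(\al,\be)$, and only $(x_1+y_1)(x_2+y_2)$ for $\bfE_5(\al)$, whose Gram matrix has rank one. The expected main obstacle is the sheer number of cases; the individual computations are routine applications of~(\ref{eq_action2}) and Theorem~\ref{theo_gens}, and once every case is checked the two lists in the proposition fall out.
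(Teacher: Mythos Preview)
Your proposal is correct and, for part~(a), follows the paper's route exactly: both of you parametrize $I_2(\algA)_{(1,1)}$ from the generators of Theorem~\ref{theo_gens}, write down the Gram matrix, and test symmetry and nondegeneracy case by case.

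For part~(b) your argument is slightly cleaner than the paper's. The paper treats (a) and (b) simultaneously inside a single case analysis, noting in each case whether the general invariant $(1,1)$-form can be made skew-symmetric and nondegenerate. You instead isolate the one-line observation $g^{-1}\omega=\det(g)\,\omega$, reducing (b) to the purely group-theoretic condition $\Aut(\algA)\leqslant\SL_2$, which is then read off from Table~1. This buys a conceptual shortcut and avoids re-examining the full $(1,1)$-component for skew-symmetry in each case; the paper's unified treatment, on the other hand, handles both parts in one pass without a separate lemma. Either way the remaining work is the routine inspection you describe.
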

\begin{proof} We can assume that $\algA$ is an algebra from one of the items of Theorem~\ref{theo_gens}. Suppose that $\algA$ has an invariant bilinear form $\varphi$, i.e, $\varphi\in I_2(\algA)$ and $\mdeg(\varphi)=(1,1)$. An $\NN^2$-homogeneous generating set for the algebra $I_2(\algA)$ is described by Theorem~\ref{theo_gens}.

\begin{enumerate}
\item[1.] In case $\algA$ is $\bfA_1(\al)$ or $\bfA_2$ we have that 
$$\varphi = \xi x_1 x_2 + \eta (x_1 y_2 - y_1 x_2)$$ 
for some $\xi,\eta\in\FF$. Since the matrix of the bilinear form $\varphi$ is $\matr{\xi}{\eta}{-\eta}{0}$, the bilinear form $\varphi$ is nondegenerate if and only if $\eta\neq0$. For non-zero $\eta$ the bilinear form $\varphi$ is skew-symmetric in case $\xi=0$, but it is not symmetric. 

\item[2.] In case $\algA=\bfA_3$ we have $\varphi=0$.

\item[3.] In case $\algA=\bfA_4(0)$ or $\algA=\bfC(\al,0)$ for $\al\in\FF$ we have that
$$\varphi = \xi x_1x_2 + \eta y_1y_2$$%
for some $\xi,\eta\in\FF$. Hence, $\varphi$ is symmetric. Moreover, $\varphi$ is nondegenerate in case $\xi,\eta\in\FF^{\times}$.

\item[4.] In case $\algA=\bfB_2(\al)$ for $\al\in\FF$ we have that $\varphi=\xi y_1 y_2$ for some $\xi\in\FF$. Hence,  the bilinear form $\varphi$ is degenerate. The cases of $\algA=\bfB_3$ and $\algA=\bfD_2(\al,\be)$ for $(\al,\be)\not\in\calT$ are similar.

\item[5.] In case $\algA=\bfD_1(\al,2\al-1)$ for $(\al,2\al-1)\in \calU$  we have that
$$\varphi = \xi (2x_1 + y_1)(2x_2+y_2) + \eta y_1y_2$$%
for some $\xi,\eta\in\FF$. Since the matrix of the bilinear form $\varphi$ is $\matr{4\xi}{2
\xi}{2\xi}{\xi+\eta}$, the bilinear form $\varphi$ is nondegenerate if and only if $\xi,\eta\in\FF^{\times}$. For non-zero $\xi$ the bilinear form $\varphi$ is symmetric, but it is not skew-symmetric.

\item[6.] Let $\algA=\bfE_1(\al,\be,\be,\al)$ for $(\al,\be,\be,\al)\in\calV$, $(\al,\be)\neq (-1,-1)$. Then  
$$\varphi = \xi(x_1+y_1)(x_2+y_2) + \eta(x_1y_2 + y_1x_2)$$%
for some $\xi,\eta\in\FF$. Since the matrix of the bilinear form $\varphi$ is 
$\matr{\xi}{\xi+\eta}{\xi+\eta}{\xi}$, the bilinear form $\varphi$  is nondegenerate if and only if $\eta\neq0$ and $2\xi+\eta\neq0$. The bilinear form $\varphi$ is symmetric. Moreover, it is skew-symmetric if and only if $\xi=-\eta =0$. 
The case of $\algA=\bfE_3(\al,\al,-1)$ for $\al\in\FF$ is similar. 

\item[7.] In case $\algA=\bfE_1(-1,-1,-1,-1)$ we have that
$$\varphi=\xi ( 2x_1x_2 - x_1 y_2 - y_1 x_2 + 2y_1 y_2)$$
for $\xi\in\FF$. Since the matrix of the bilinear form $\varphi$ is 
$\matr{2\xi}{-\xi}{-\xi}{2\xi}$, in case $\xi\neq0$ the bilinear form $\varphi$  is nondegenerate and symmetric, but it is not skew-symmetric.

\item[8.] In case $\algA=\bfE_5(\al)$ for $\al\in\FF$  we have that $\varphi = \xi  (x_1 + y_1)(x_2 + y_2)$ for some $\xi\in\FF$. Hence, the bilinear form $\varphi$ is degenerate.
\end{enumerate}
\end{proof}

Since every algebra with the trivial automorphism group has a symmetric invariant nondegenerate bilinear form, Proposition~\ref{prop_form} implies the following corollary.

%-------------------------------------------------------
\begin{cor}\label{cor_form} 
Assume that the characteristic of $\FF$ is zero and $\algA$ is a two-dimensional algebra. 
\begin{enumerate}
\item[1.] If the group $\Aut(\algA)$ is infinite, then the algebra $\algA$ does not admit a symmetric invariant nondegenerate bilinear form.

\item[2.] If the algebra $\algA$ is simple, then $\algA$ admits a symmetric invariant nondegenerate bilinear form.
\end{enumerate}
\end{cor}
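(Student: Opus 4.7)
The plan is to deduce both parts of the corollary directly from Proposition~\ref{prop_form}, together with the classification of two-dimensional simple algebras from Theorem~\ref{theo_simple} (refined, in the case of non-trivial automorphism group, by Corollary~\ref{cor_simple2}) and the description of automorphisms in Table~1.

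For part~1, I note that if $\Aut(\algA)$ is infinite then it is in particular non-trivial, so Proposition~\ref{prop_form}(a) applies: if $\algA$ were to admit a symmetric invariant non-degenerate bilinear form, then $\algA$ would be isomorphic to one of the six families listed there, namely $\bfA_4(0)$, $\bfC(\al,0)$, $\bfD_1(\al,2\al-1)$, $\bfE_1(\al,\be,\be,\al)$ with $(\al,\be)\neq(-1,-1)$, $\bfE_1(-1,-1,-1,-1)$, or $\bfE_3(\al,\al,-1)$. A direct inspection of Table~1 shows that each of these algebras has an automorphism group of order two (or, in the case of $\bfE_1(-1,-1,-1,-1)$, isomorphic to $\Sym_3$ by Lemma~\ref{lemma_AutE1}); in every case finite, contradicting the infiniteness of $\Aut(\algA)$.

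For part~2, I would split on whether $\Aut(\algA)$ is trivial. If it is, then any symmetric non-degenerate bilinear form on $\algA$ is automatically $\Aut(\algA)$-invariant; a concrete choice is $\varphi$ with Gram matrix equal to the $2\times2$ identity in the fixed basis of $\algA$, which is symmetric and non-degenerate. If $\Aut(\algA)$ is non-trivial, I invoke Corollary~\ref{cor_simple2}, which lists precisely the simple two-dimensional algebras with a non-trivial automorphism group. Comparing this list entry-by-entry with the list in Proposition~\ref{prop_form}(a) shows that every such $\algA$ already appears there, hence admits the desired form.

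The main obstacle is essentially bookkeeping: verifying that the enumeration in Corollary~\ref{cor_simple2} matches one-for-one (under the same labels and parameter constraints) the first five entries of Proposition~\ref{prop_form}(a), and that no simple algebra with non-trivial automorphism group slips between the two lists. Since the classification and the parameter families are already tabulated with identical notation in both statements, I expect this matching to be routine rather than a substantive difficulty, so the corollary follows almost formally from the preceding results.
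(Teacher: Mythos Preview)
Your proof is correct and takes essentially the same approach as the paper, which proves the corollary in a single sentence by noting that any algebra with trivial automorphism group trivially admits a symmetric invariant non-degenerate bilinear form and that the remaining cases follow from Proposition~\ref{prop_form}. You have simply unpacked the details of this argument (the inspection of Table~1 for part~1 and the comparison of Corollary~\ref{cor_simple2} with Proposition~\ref{prop_form}(a) for part~2); one trivial slip is your reference to ``the first five entries'' of Proposition~\ref{prop_form}(a), when in fact there are six, but this does not affect the argument.
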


%-------------------------------------------------------
\begin{remark}\label{rem_form} 
Assume that the characteristic of $\FF$ is zero. Then a two-dimensional algebra $\algA$ may not admit a symmetric associative invariant nondegenerate bilinear form. As an example, we can take $\algA=\bfA_4(0)$.  
\end{remark}

\bibliographystyle{abbrvurl}
\bibliography{literature}

%====================================================
\end{document}